\patchcmd{\@makechapterhead}{\large}{\normalsize}{}{}% for \chapter
\patchcmd{\@makechapterhead}{\large}{\normalsize}{}{}% for \chapter
\patchcmd{\@makeschapterhead}{\normalsize}{\normalsize}{}{}% for \chapter*
\g@addto@macro\normalsize{\setlength\abovedisplayskip{4pt}}
\g@addto@macro\normalsize{\setlength\belowdisplayskip{4pt}}
\newtheorem{theorem}{Theorem}[section]
\newtheorem{lemma}{Lemma}[section]
\newtheorem{corollary}{Corollary}[section]
\newtheorem{example}{Example}[section]
\newtheorem{remark}{Remark}[section]
\let\oldref\ref
\renewcommand{\ref}[1]{(\oldref{#1})}  % stupid kludge for laTeX
\renewcommand{\eqref}[1]{(\oldref{#1})} %BB03dec18 
\font\titlefonrm=ptmb scaled \magstep3
\newbox\boxaddrone \newbox\boxaddrtwo
\newbox\boxcaption
\newbox\boxtemp
\def\dH#1{\dot H^{#1}(\Omega)}
\def\bdryd#1{{\small {\frac{\partial #1}{\partial\nu}}}}\def\wfunc{w}
\newcommand{\Margin}[1]{}
\newcommand{\revision}[1]{#1}
\begin{document}

\title{{\titlefonrm 
The Inverse Problem of Reconstructing  Reaction-Diffusion Systems.
}}
\author{Barbara Kaltenbacher\footnote{
Department of Mathematics,
Alpen-Adria-Universit\"at Klagenfurt.
barbara.kaltenbacher@aau.at.}
\and
William Rundell\footnote{
Department of Mathematics,
Texas A\&M University,
%College Station,
Texas 77843. % USA.
rundell@math.tamu.edu}
}
\maketitle

\begin{abstract}
This paper considers the inverse problem of recovering state-dependent
source terms in a reaction-diffusion system from overposed data
consisting of the values of the state variables either at a fixed finite
time (census-type data) or a time trace of their values at a fixed point
on the boundary of the spatial domain.
We show both uniqueness results and the convergence of an iteration
scheme designed to recover these sources.
This leads to a reconstructive method and we shall demonstrate  its
effectiveness by several illustrative examples.
\end{abstract}

%%%%%%%%%%%%%%%%%%%%%%%%%%%%%%%%%%%%%%%%%%%%%%%%%%%%
\section{Introduction}\label{sect:introduction}

Reaction diffusion equations have a rich history in the building
of mathematical models for physical processes.
They are descendants of nonlinear ordinary differential equations in time with
an added  spatial component making for a partial differential equation
of parabolic type.
These early models dating from the first decades of the twentieth century
include that of Fisher in considering the Verhulst
logistic equation together with a spatial diffusion or migration,
$\,u_t - ku_{xx} = f(u) = bu(1-cu)$ to take into account migration of species and that
of Kolmogorov, Petrovskii and Piskunov in similar models which are now
collectively referred to as the Fisher-KPP theory of population modeling,
see, \cite{Murray:2002}.
There is also work in combustion theory due to
Zeldovich and Frank-Kamenetskii that utilize higher order polynomials
in the state variable $u$ and where the diffusion term
acts as a balance to the chemical reactions, \cite{Grindrod:1996}.

The use of systems of reaction diffusion models followed quickly;
adding a spatial component to traditional population dynamic models
such as predator-prey and competitive species as well as the interaction of
multiple species or chemicals.
By the early 1950's it was recognized by Alan Turing that solutions to
such equations can, under the correct balance of terms, be used to simulate
natural pattern formations such as stripes and spots that may arise naturally
out of a homogeneous, uniform state \cite{Turing:1952}.
This theory, which can be called a reaction-diffusion theory of morphogenesis, 
has been a major recurrent theme across many application areas.

These models use the underlying physics to infer assumptions about the
specific form of the reaction term $f(u)$.
The few constants appearing, if not exactly known,
are easily  determined in a straightforward way by a least squares fit
to data measurements.
We envision a more complex situation where the function $f(u)$
(or multiple such functions as we will be considering systems of equations)
cannot be assumed to have a specific known form,
or to be analytic so that knowing it over a limited range gives a global extension,
 and therefore must be
treated as an undetermined coefficient problem for a nonlinear
partial differential equation.

Let $\Omega$ be a bounded, simply-connected region in $\mathbb{R}^d$
with smooth boundary $\partial\Omega$, and let
$\mathbb{L} := -\nabla\cdot(a(x)\nabla\cdot) + q(x)$.
be a uniformly elliptic operator of second order with $L^\infty$ coefficients
\begin{equation}\label{eqn:direct_prob_system}
\begin{aligned}
u_t(x,t) - \mathbb{L} u(x,t) &= f_1(u) + \phi_1(w) +  r_1(x,t,u,v), \\
v_t(x,t) - \mathbb{L} v(x,t) &= f_2(v) + \phi_2(w) +  r_2(x,t,u,v), \\
w &= w(u,v),
\qquad (x,t)\in\Omega\times(0,T) 
\end{aligned}
\end{equation}
for some fixed time $T$
and subject to the prescribed  initial and boundary conditions
\begin{equation}\label{eqn:initial_bdry_conds}
\begin{aligned}
\bdryd{u}(x,t)  +\gamma_1(x) u(x,t)&= \theta_1(x,t)\\
\bdryd{v}(x,t) +\gamma_2(x) v(x,t)&= \theta_2(x,t)\\
\quad (x,t)\in\partial\Omega\times(0,T) \\
u(x,0) = u_0(x),\qquad v(x,0) &= v_0(x) \quad x\in\Omega \\
\end{aligned}
\end{equation}
where 
\Margin{R2 (d)}
\revision{$\nu$ is the outer unit normal and}
$\gamma_i(x)$, $\theta_i(x,t)$  are in $C^\beta(\partial\Omega)$, 
where $C^\beta$ is the Schauder space with H\"older exponent $\beta$, $0<\beta<1$.

In equation~\eqref{eqn:direct_prob_system} we assume
$r_1(x,t,u,v)$ and $r_2(x,t,u,v)$ are known and the interaction variable
$w = w(u,v)$ is also known but
either  the pair $\{f_1,f_2\}$ or the pair $\{\phi_1,\phi_2\}$
is unknown and the inverse problems posed are to determine these quantities.

Thus, there are two distinct inverse problems.
The first is when we assume the interaction coupling $\phi_i(w)$ between
$u$ and $v$ is known, but both $f_1(u)$ and $f_2(v)$ have to be determined.
The second is when we assume the growth rate couplings $f_i$ for both
$u$ and $v$ are known, but the interaction terms $\phi_1(w)$ and
$\phi_2(w)$ have to be determined.

In order to perform these recoveries we must prescribe additional
data and we shall again consider two possibilities:
the values of $u(x,T)$ and $v(x,T)$ taken at a later, fixed time $T$
\begin{equation}\label{eqn:overposed_finalT}
u(x,T) = g_u(x),\qquad 
v(x,T) = g_v(x),\qquad 
x \in \Omega;\phantom{\partial} % \phantom for alignment purposes only
\end{equation}
or the time traces $u(x_0,t)$ and $v(x_0,t)$ measured at a fixed point
$x_0\in\partial\Omega$ and for all $t\in(0,T)$.
\begin{equation}\label{eqn:overposed_timetrace}
u(x_0,t) = h_u(t),\qquad 
v(x_0,t) = h_v(t),\qquad 
x_0 \in \partial\Omega.
\end{equation}
If the boundary conditions at the measurement point $x_0$ are of
Dirichlet type ($\gamma=\infty$) then instead we measure the flux
$a(x_0)\partial_\nu$ at $x_0$
in \eqref{eqn:overposed_timetrace}.
Note that final time data corresponds to census data taken a fixed time $T$
for the species involved.  The time trace data involves monitoring
the population (or of chemical concentrations) at a fixed spatial point
as a function of time.
Both of these data measurements are quite standard in applications.
In \eqref{eqn:overposed_finalT}, the observation domain can be restricted to a subdomain $\omega$ of $\Omega$; in view of the fact that the functions to be discovered are univariate, even an appropriately chosen curve in $\Omega$ could possibly suffice.

The interaction coupling $w$ of $u$ and $v$, which we assume known,
can take on several forms.
The near universal choice in ecological modeling is to take $w = uv$.
This is also common in other applications but other 
more complex possibilities are in use.

For example, the Gray-Scott model of reaction diffusion,
\cite{Pearson:1993}, takes
$w = u^2v$ and is a coupling term that often leads to pattern formation.
This coupling occurs, for example, in molecular realizations
where there is an activator ($u$) and an inhibitor ($v$).
The antagonistic effect here occurs from the relative depletion of
$v$ that is consumed during the production of $u$.
The so-called Brusselator equation (the Walgraef and Aifantis equation)
which also leads to the generation of sustained oscillations,
instead takes $w = b u - c u^2 v$ and occurs, amongst many other situations,
in the dislocation dynamics in materials subjected to cyclic loading,
\cite{WalgraefAifantis:1985}.

Other possibilities include $w = \sqrt{u^2+v^2}$ or the nonlocal
situation $w = \int_\Omega (u^2+v^2)\,dx$ as well as combinations
of all of the above.

The astute reader will already have noted that there are other very
similar combinations that might lead to physically-motivated inverse problems.
Indeed, this is the case and the analysis from Section \oldref{sec:vecBB3} also applies to 
other combinations of unknowns than those mentioned above.

In the case of a single equation using time trace data, uniqueness results
and the convergence of reconstruction algorithms were shown in
\cite{DuChateauRundell:1985, PilantRundell:1986, PilantRundell:1987} for the
recovery of the unknown term $f(u)$.
In the more recent paper \cite{KaltenbacherRundell:2019c}, the authors
used final time data as in \eqref{eqn:overposed_timetrace}
and showed uniqueness and contractibility results for a fixed point iteration
scheme which allowed effective recovery of $f(u)$. 
We also point to \cite{KaltenbacherRundell:2019b} where 
the reaction term is of the form $q(x) f(u)$ with known $f(u)$ and an unknown space dependent
term $q(x)$ that controls the intensity of the reaction.

\Margin{R1 A (1)}
\revision{
The reconstruction methods that we consider here are based on a projection of the PDE on the observation manifold, which naturally leads to a fixed point iteration for the unknown terms.
As compared to Newton's method, which is a more general approach, this projection principle requires an appropriate form of the given data; in particular, the observation manifold cannot be orthogonal to the dependence direction of the unknown function, i.e., if we aim at reconstructing an $x$ dependent coefficient then time trace observations cannot be used in such a projected approach (and will hardly give good reconstruction results with any other method either). Since here the unknown function does not depend on $x$ or $t$ this problem does not appear, though. 
Moreover, a convergence proof of regularized Newton type methods can only be carried out under certain conditions on the forward operator, such as the tangential cone condition (see, e.g., \cite{KNS08} and the references therein), which is most probably not satisfied here.
Also note that the fixed point approach used here entails a uniqueness result for the inverse problem.
}

\revision{
We remark that we have followed standard mathematical practice when
    writing differential equations by scaling the equation to set parameters
    not under consideration to unity.
    Thus in our reconstruction we have taken the volume of $\Omega$ to
be one
    and the elliptic operator $\mathbb{L}$ to be the Laplacian
    thereby making the implicit assumption that the diffusion
coefficient $a(x)$
    in the leading term $\mbox{div}(a\nabla u)$ is unity.
    In an actual physical situation this will not be the case; for example
    the value of $a$ for a molecule diffusing in a gas will be several
    orders of magnitude smaller.
}

%\smallskip

The outline of the paper is as follows.
Section \oldref{sec:convergence} provides an extension of the convergence analysis in \cite{KaltenbacherRundell:2019c} for a fixed point iteration as well as its consequences for uniqueness in two directions. Firstly, we ``vectorize'' this analysis to be applicable for a class of reaction-diffusion equations, characterized by certain conditions. Secondly, we allow for higher space dimensions by carrying out the estimates in Schauder spaces rather than (Hilbert) Sobolev spaces, thus largely avoiding dependence on the space dimension due to Sobolev embeddings. The focus is on the final data case \eqref{eqn:overposed_finalT} there.
In Section \oldref{sec:reconstructions} we show numerical tests with the discussed fixed point iteration. We provide reconstructions of the pair $(f_1,f_2)$ for known $\phi_i,r_i,w$, as well as of the pair $(\phi_1,\phi_2)$ for known $f_i,r_i,w$ in \eqref{eqn:direct_prob_system}. Here, both settings of final time data \eqref{eqn:overposed_finalT} and of time trace data \eqref{eqn:overposed_timetrace} are considered.

\section{Preamble}
\Margin{R1 B(1)}
\Margin{R2 (a)}
\revision{We start with a short revision of the results in \cite{KaltenbacherRundell:2019c}, since the present paper builds on these. As a matter of fact, in \cite{KaltenbacherRundell:2019c}, we  considered a scalar problem with only one unknown function $f$ but the setting there is slighty more general than the one described in the introduction (and most of the rest of this paper) in the sense that a subdiffusion equation is considered.
More precisely, in \cite{KaltenbacherRundell:2019c} we treat the
}
semilinear (sub)diffusion initial boundary value problem
\begin{equation}\label{eqn:ibvpBB3}
\begin{aligned}
D_t^\alpha u(x,t) + \mathbb{L}u(x,t) &= f(u(x,t))+r(x,t)
\quad (x,t)\in \Omega\times(0,T)\\
\frac{\partial u}{\partial\nu} + \gamma u &= 0\mbox{ on }\partial \Omega\times(0,T)\\
u(x,0) &= u_0(x),\qquad x\in\Omega
\end{aligned}
\end{equation}
where $D_t^\alpha$ denotes the Djrbashian-Caputo fractional time derivative of order $\alpha\in(0,1)$
which is defined by
\begin{equation}\label{eqn:DC_derivative}
D_t^\alpha u =
\frac{1}{\Gamma(1-\alpha)}\int_{0}^{t}\frac{u'(s)}{(t-s)^{\alpha}}ds
\end{equation}
In case $\alpha=1$, $D_t^\alpha$ is the usual first time derivative and \eqref{eqn:ibvpBB3} becomes a semilinear parabolic equation. 
\Margin{R1 B(1)}
\revision{
For details on fractional differentiation and subdiffusion equations, we refer to, e.g.,
\cite{Dzharbashyan:1964,Djrbashian:1970,Djrbashian:1993,
MainardiGorenflo:2000,SakamotoYamamoto:2011a,SamkoKilbasMarichev:1993}, see also the tutorial on inverse problems for anomalous diffusion processes \cite{JinRundell:2015};
For well-posedness of the forward problem \eqref{eqn:ibvpBB3} with Lipschitz continuous $f$ we refer to \cite[Section 3]{JinLiZhou18}, see also \cite[Section 2]{KaltenbacherRundell:2019c}.}

In \cite{KaltenbacherRundell:2019c}, the inverse problem of recovering the nonlinearity $f$ from final time data 
\begin{equation}\label{eqn:finaltimedataBB3}
g(x)\equiv u(x,T),\qquad x\in\Omega
\end{equation}
(assuming that all other coefficients in \eqref{eqn:ibvpBB3} are known) was approached by a fixed point scheme for the operator $\mathbb{T}$ defined by the identity
\[
(\mathbb{T}f)(g(x)) = D_t^\alpha u(x,T;f) - \mathbb{L}g(x) - r(x,T)
\quad x\in \Omega\,.
\]
where for given $f$, the function $u(x,t;f)$ denotes the solution to \eqref{eqn:ibvpBB3}.
This defines $\mathbb{T}f$ just on the range of $g$ and therefore it is crucial to assume that all values of $u$ that get inserted into $f$ belong to this range
\begin{equation}\label{eqn:rangeBB3}
J:=g(\Omega)\supseteq u(\Omega\times[0,T))\,.
\end{equation}
More precisely, it suffices to impose this condition at the solution $(f_{ex},u_{ex})$ of the inverse problem, and to project the iterates onto this range during the fixed point iteration.
Obviously, these considerations can be extended to the case of partial observations $g(x)\equiv u(x,T)$, $x\in\omega$ on a subset $\omega\subset\Omega$, under the adapted range condition $g(\omega)\supseteq u(\Omega\times[0,T))$.
Since $f$ is an univariate function, measurements on a higher dimensional domain $\Omega$ appear to be too much in fact and it can suffice to take observations along a curve $\omega$ connecting the points of minimal and maximal values of $u(x,T)$.

The following example illustrates the fact that this imposes a true constraint on the class of problems that can be expected to exhibit unique identifiability.

\begin{example}
Let $\varphi$ be an eigenfunction of $-\mathbb{L}$ with corresponding
eigenvalue $\lambda>0$.
Take $f(u)=c u$ as well as $u_0=\varphi$, $r=0$, then $u(x,t;f)=e^{-(\lambda-c) t}\varphi(x)$. If $\varphi$ stays positive (which is, e.g., the case is $\lambda$ is the smallest eigenvalue), and $c<\lambda$, then, with $\underline{\varphi}=\min_{x\in\bar{\Omega}}\varphi(x)\geq0$, $\overline{\varphi}=\max_{x\in\bar{\Omega}}\varphi(x)>0$, we get for the range of $u$ over all of $\Omega$,
\[ 
\min_{(x,t)\in\bar{\Omega}\times[0,T]} u(x,t;f) = \underline{\varphi} e^{-(\lambda-c) T} \,, \quad 
\max_{(x,t)\in\bar{\Omega}\times[0,T]} u(x,t;f) = \overline{\varphi} \,,
\]
whereas for the final time data we have 
\[ 
\min_{x\in\bar{\Omega}} u(x,T;f) = \underline{\varphi} e^{-(\lambda-c) T} \,, \quad 
\max_{x\in\bar{\Omega}} u(x,T;f) = \overline{\varphi} e^{-(\lambda-c) T} \,,
\]
and therefore the range condition will be violated.
\end{example}

In \cite[Section 3]{KaltenbacherRundell:2019c} we proved a self-mapping property of $\mathbb{T}$ on a sufficiently small ball in $W^{1,\infty}(J)$, as well as its contractivity in the parabolic case $\alpha=1$ for $T$ large enough provided $f$ is strictly monotonically decreasing, which implies exponential decay of the corresponding solution or actually its time derivative $D_tu$, as long as $r$ vanishes or has exponentially decaying time derivative, see also Lemma \oldref{lem:expdecay} below.
Such a dissipative setting is indeed crucial for proving that the Lipschitz constant of $\mathbb{T}$ decreases with increasing final time $T$, as the following counterexample shows.
\begin{example}
Again, let $\varphi(x)$ be an eigenfunction of $-\mathbb{L}$ with corresponding
eigenvalue $\lambda>0$.
Take $f^{(1)}(u) = c_1 u$, $f^{(2)}(u) = c_2 u$ as well as $u_0=\varphi$, $r=0$, and set $u^{(i)}(x,t)=u(x,t;f_i)$ which can be computed explicitely as
\[
u^{(i)}(x,t) = e^{(c_i-\lambda) t}\varphi(x) \,,\quad
u^{(i)}_t(x,t) = (c_i-\lambda)e^{(c_i-\lambda) t}\varphi(x)\,, 
\]
which yields
\[
\begin{aligned}
\mathbb{T}f^{(1)}(g(x))-\mathbb{T}f^{(2)}(g(x))&=
u^{(1)}_t(x,T) - u^{(2)}_t(x,T) \\&=
\bigl[ (c_1-\lambda)e^{c_1 T}- (c_2-\lambda)e^{c_2 T}\bigr]
e^{-\lambda T}\varphi(x)\,. 
\end{aligned}
\]
Thus, for any combination of norms $\|\cdot\|_X$, $\|\cdot\|_Z$ the contraction factor $\frac{\|\mathbb{T}f^{(1)}(g)-\mathbb{T}f^{(2)}(g)\|_Z}{|f^{(1)}-f^{(2)}|_X}$ is determined by the function $m(T)=\frac{|(c_1-\lambda)e^{(c_1-\lambda) T}- (c_2-\lambda)e^{(c_2-\lambda) T} |}{|c_1-c_2|}$.
Now take $c_1>c_2>\lambda$; then $m(0)=1$ and $m'(T)=\frac{(c_1-\lambda)^2e^{(c_1-\lambda) T}- (c_2-\lambda)^2e^{(c_2-\lambda) T} }{c_1-c_2}>0$.
This makes a contraction for finite time $T$ impossible unless 
\Margin{R1 D (2)}
$\frac{\|
\revision{\varphi}
\|_Z}{\|
\revision{f^0}
\|_X}$ 
\revision{(with $f^0(u)=u$)} 
is sufficiently small.
\end{example}

The two examples above clearly show that the two aims (a) range condition and (b) dissipativity (for contractivity) are conflicting, at least as long as we set $r=0$ as it was done here. 
Thus, in order to achieve (a) we need to drive the system by means of $r$ (or alternatively, by inhomogeneos boundary conditions). 
Luckily, nonvanishing $r$ does not impact the case (b)
since these inhomogenieities basically cancel out when taking differences
between fixed point iterates for establishing contractivity estimates.
Thus it is indeed possible to have range condition and contractivity together.
However, for this it is crucial to not only use found data, but to be able to
design the experiment in such a way that the data exhibits the desired
properties.

\smallskip

The analysis in \cite{KaltenbacherRundell:2019c} is restricted to the case of
a single scalar equation and to one space dimension, due to the fact that it
is carried out in (Hilbert) Sobolev spaces using embeddings into $W^{1,\infty}$.
The aim of the analysis here is therefore twofold. First of all, we intend
to generalize the approach from \cite{KaltenbacherRundell:2019c} towards
identification of nonlinearities in certain systems of reaction-diffusion
equations, see Section \oldref{sec:vecBB3}. Secondly, we provide an analysis
of the above fixed point scheme in Schauder spaces, which allows us to work
in higher space dimensions, see Section \oldref{sec:convSchauder}.
For the latter case we have to restrict ourselves to the parabolic setting
$\alpha=1$, 
%as the results from \cite{EidelmanKochubei04} indicate that the regularity theory from, e.g., \cite{Friedman1964} does not carry over to the subdiffusion setting in space dimension two and more, due to singularity of the fundamental solution.
as carrying over the regularity theory from, e.g., \cite{Friedman1964} to the
subdiffusion setting, based on results from, e.g., \cite{EidelmanKochubei04},
would be a major effort in itself that goes beyond the scope of this paper.
This also affects the extension of the analysis from
\cite{PilantRundell:1986,PilantRundell:1988} for time trace
\eqref{eqn:overposed_timetrace} instead of final time data
\eqref{eqn:overposed_finalT}, which is why we focus on the final time
observation case in the analysis section \oldref{sec:convergence}.

\smallskip

A generalization of the scalar diffusion setting to systems of an arbitrary number $N$ of  possibly interacting states can be achieved by replacing the unknown function $f$ in \eqref{eqn:ibvpBB3} by a component wise defined vector valued unknown nonlinearty $\vec{f}=(f_1,\ldots,f_N)$, whose action is not only defined on a single state but on a possible combination of these, via known functions $\vec{\wfunc}=(\wfunc_1,\ldots,\wfunc_N)$. Possible additional known interaction and reaction terms are encapsulated in a set of (now potentially also nonlinear) functions $\vec{r}=(r_1,\ldots,r_N)$.

We thus consider systems of reaction-(sub)diffusion equations of the form 
\begin{equation}\label{eqn:system_gen}
\begin{aligned}
D_t^\alpha u_i(x,t) + (\mathbb{L}\vec{u})_i(x,t) &= f_i(\wfunc_i(\vec{u}(x,t)))+r_i(x,t,\vec{u}(x,t))
\quad (x,t)\in \Omega\times(0,T)\,,\\
& \hspace*{7cm} i\in\{1,\ldots N\}\,,
\end{aligned}
\end{equation}
for $\vec{u}=(u_1,\ldots,u_N)$ subject to the boundary and initial conditions 
\begin{equation}\label{eqn:system_bdry}
\frac{\partial u_i}{\partial\nu} + \gamma_i u_i = 0\mbox{ on }\partial \Omega\times(0,T)\,, \ i\in\{1,\ldots N\}
\end{equation}
and
\begin{equation}\label{eqn:system_init}
\vec{u}(x,0) = \vec{u}_0(x),\qquad x\in\Omega\,.
\end{equation}
Well-posedness of this forward problem with Lipschitz continuous nonlinearities $f_i$, $r_i$ is a straightforward extension of the results from \cite[Section 3]{JinLiZhou18}, \cite[Section 2]{KaltenbacherRundell:2019c}.

Given the 
\Margin{R1 B (2)}
\revision{self-adjoint}
uniformly elliptic operator $-\mathbb{L}$, e.g. 
\begin{equation}\label{eqn:exampleL}
-\mathbb{L}=\mbox{diag}(-\nabla\cdot(\underline{A}\nabla\cdot),\ldots,-\nabla\cdot(\underline{A}\nabla\cdot))+Q(x) 
\end{equation}
with $Q:\Omega\to\mathbb{R}^{N\times N}$, $\underline{A}\in \mathbb{R}^{N\times N}$ 
\revision{symmetric}
(uniformly) positive definite, the functions 
\[
\vec{\wfunc}:I:=I_1\times\cdots\times I_N \to J:=J_1\times\cdots\times J_N, \qquad 
\vec{r}:\Omega\times(0,T)\times I\to \mathbb{R}^N\,,
\] 
and the data $\vec{u}_0$, $\gamma_i$, as well as measurements on a subset $\omega$ of the domain $\Omega$
\begin{equation}\label{eqn:finaltimedata}
g_i(x)\equiv u_i(x,T),\qquad x\in\omega\,,\quad i\in\{1,\ldots N\}\,,
\end{equation}
we wish to determine the unknown functions
\[
f_i:J_i\to\mathbb{R}\quad i\in\{1,\ldots N\}\,.
\]
This includes both cases of identifying the reaction terms $f_i$ (by setting $\wfunc_i(\xi_1,\xi_2)=\xi_i$) and of identifying the interaction terms $\phi_i$ in \eqref{eqn:direct_prob_system}. 
We will abbreviate the collection of unknown functions by $\vec{f}=(f_1,\ldots,f_N)$, noting that each individual $f_i$ might have a different domain of definition $J_i$.

Note that this setting allows for linear and nonlinear coupling among the individual states $u_i$ via the known differential operator $\mathbb{L}$ 
\revision{(often referred to as cross-diffusion)}
and the known functions $\wfunc_i$ as well as $r_i$.

Throughout the analysis we will impose the range condition 
\begin{equation}\label{eqn:range_sys}
I_i=[\underline{g}_i,\overline{g}_i]=[\min_{x\in\omega}g_i(x)\,,\max_{x\in\omega}g_i(x)]
= g_i(\omega)\supseteq u_{ex,i}(\Omega\times[0,T))\,,\quad i\in\{1,\ldots N\}\,,
\end{equation}
cf, \eqref{eqn:rangeBB3}, where we assume $\omega$ to be compact to guarantee (via Weierstrass' Theorem and continuity of $g_i$) that $I_i$ is indeed a compact interval.
Here $u_{ex}$ is the state part of a solution $(f_{ex},u_{ex})$ of the inverse problem.

\Margin{R1 B (3)}
\revision{
Part of the analysis is based on series expansions in terms of the eigenvalues and -functions $(\lambda_n,\phi_n)_{n\in\mathbb{N}}$ of the self-adjoint elliptic operator $\mathbb{L}$ 
as well as the induced Hilbert spaces 
\[
\dot{H}^\sigma(\Omega)=\{v\in L^2(\Omega)\ : \ \sum_{n=1}^\infty \lambda_n^{\sigma/2} \langle v,\phi_n\rangle \phi_n \, \in L^2(\Omega)\}
\]
where $\langle\cdot,\cdot\rangle$ denotes the $L^2$ inner product on $\Omega$, with the norm 
%\[
$\displaystyle{
\|v\|_{\dot{H}^\sigma(\Omega)} 
= \Bigl(\sum_{n=1}^\infty \lambda_n^\sigma \langle v,\phi_n\rangle^2 \Bigr)^{\!1/2}\!,
}$
%\]
that is equivalent to the $H^\sigma(\Omega)$ Sobolev norm provided the coefficients of $\mathbb{L}$ are sufficiently regular, which we assume to be the case here. 
}

\smallskip

Consider now the spatially one-dimensional setting of $\Omega\subseteq\mathbb{R}^1$ being an open interval $(0,L)$,
and make the invertibility and smoothness assumptions
\begin{equation}\label{eqn:invert}
\begin{aligned}
&\vec{g}\in H^2(\omega;\mathbb{R}^N)\,, \quad \vec{\wfunc} \in H^2(I_1\times\cdots\times I_N;\mathbb{R}^N)\\
&\left|\sum_{j=1}^N \frac{\partial \wfunc_i}{\partial \xi_j}(\vec{g}(x))\,g_j'(x)\right|\geq\beta>0 \mbox{ for all }
x\in\omega\,, \ i\in\{1,\ldots N\}\,,
\end{aligned}
\end{equation}
that by the Inverse Function Theorem imply that $\wfunc_i\circ\vec{g}:\omega\to J_i$ is bijective and its inverse is in $H^2(J_i;\Omega)$. 
Thus we can define the fixed point operator $\mathbb{T}:X\to X$, where
\begin{equation}\label{eqn:X}
X:=X_1\times\cdots\times X_N\,, \quad
X_i=\{f_i\in W^{1,\infty}(J_i)\, : \, f_i(\wfunc_i(\vec{u}_0))\in {\dot{H}^2(\Omega)}\}\,,
\end{equation}
by 
\begin{equation}\label{eqn:opT_fiti}
(\mathbb{T}\vec{f})_i(\wfunc_i(\vec{g}(x)) = D_t^\alpha u_i(x,T;\vec{f}) - (\mathbb{L}\vec{g})_i(x) - r_i(x,T,\vec{g}(x))
\quad x\in \omega\,, \quad i\in\{1,\ldots N\}\,,
\end{equation}
where for any $\vec{j}=(j_1,\ldots,j_N)$ with $j_i:J_i\to\mathbb{R}$, the function $\vec{u}(x,t)=\vec{u}(x,t;\vec{j})$ solves
\begin{equation}\label{eqn:system_gen_proj}
\begin{aligned}
D_t^\alpha u_i(x,t) + (\mathbb{L}\vec{u})_i(x,t) &= h_i(\wfunc_i(P\vec{u}(x,t)))+r_i(x,t,P\vec{u}(x,t))
\quad (x,t)\in \Omega\times(0,T)\,,\\
& \hspace*{7cm} i\in\{1,\ldots N\}\,,\\
\frac{\partial u_i}{\partial\nu} + \gamma_i u_i &= 0\mbox{ on }\partial \Omega\times(0,T)\,, \ i\in\{1,\ldots N\}\\
\vec{u}(x,0) &= \vec{u}_0(x),\qquad x\in\Omega\,,
\end{aligned}
\end{equation}
with the projection $P:\mathbb{R}^N\to I$ on the compact cuboid $I$, i.e., $P_i\xi=\max\{\underline{g}_i,\min\{\overline{g}_i,\xi\}\}$.

The range condition \eqref{eqn:range_sys} guarantees that any solution $(\vec{f}_{ex},\vec{u}_{ex})$ of the inverse problem 
\eqref{eqn:system_gen}, \eqref{eqn:system_bdry}, \eqref{eqn:system_init}, \eqref{eqn:finaltimedata} is a fixed point of $\mathbb{T}$.
\Margin{R1 B (6)}

\bigskip

Besides the resonstruction problem \eqref{eqn:system_gen}, \eqref{eqn:finaltimedata} with final time data, that will be discussed in detail in the convergence section \oldref{sec:convergence}, in the numercial reconstruction section we will also consider an analogous inverse problem of recovering $\vec{f}=(f_1,\ldots,f_N)$ in \eqref{eqn:system_gen} from time trace data
\begin{equation}\label{eqn:timetracedata}
\vec{h}(t)\equiv \vec{u}(x_0,t),\qquad t\in(0,T)\,,
\end{equation}
for some $x_0\in\partial\Omega$, cf. Pilant and Rundell \cite{PilantRundell:1986,PilantRundell:1988} for the scalar case.
Under the invertibility condition  
\begin{equation}\label{eqn:invert_h}
\begin{aligned}
&\vec{h}\in C^{1,1}(0,T;\mathbb{R}^N)\,, \quad \wfunc \in C^2(I_1\times\cdots\times I_N;\mathbb{R}^N)\\
&\left|\sum_{j=1}^N \frac{\partial \wfunc_i}{\partial \xi_j}(\vec{h}(t))\,h_j'(t)\right|\geq\beta>0 \mbox{ for all }
t\in(0,T)\,, \ i\in\{1,\ldots N\}\,,
\end{aligned}
\end{equation}
we can, analogously to \cite{PilantRundell:1986,PilantRundell:1988}, define a fixed point operator $\mathbb{T}:X:=X_1\times\cdots\times X_N\to X_1\times\cdots\times X_N$, where
$X_i=C^{0,1}(J_i)$,
by 
\begin{equation}\label{eqn:opT_titr}
(\mathbb{T}f)_i(\wfunc_i(\vec{h}(t)) = D_t^\alpha h_i(t) - (\mathbb{L}\vec{u})_i (x_0,t;\vec{f}) -r_i(x_0,t,\vec{h}(t))\quad t\in(0,T)\,, \quad i\in\{1,\ldots N\}\,.
\end{equation}
The crucial estimates of 
\Margin{R1 D (3)}
$\|(\mathbb{L}\vec{u})_i (x_0,t;\vec{f})\|_{\revision{C^{0,1}(\Omega)}}$ 
required for establishing self-mapping and contraction properties of $\mathbb{T}$ on a ball in $X$ as in \cite{PilantRundell:1986,PilantRundell:1988} could in principle like there be based on the implicit representation 
\[
\vec{u}(x,t)=\vec{\psi}(x,t)+\int_0^t \int_\Omega K(x,y,t-\tau)\Bigl(\vec{f}(\wfunc(\vec{u}(y,\tau))+\vec{r}(y,\tau,\vec{u}(y,\tau))\Bigr)\, dy
\]
of $\vec{u}$ by means of the Green's function $K$ and the solution
$\vec{\psi}(x,t)=\vec{u}(x,t;0)$ of the linear problem obtained by setting
$\vec{f}\equiv 0$, and replacing $\vec{r}(x,t,\vec{u}(x,t))$ by
$\vec{r}(x,t,0)$, together with regularity estimates on the Green's function
$K$. 
For Green's functions for systems and their regularity in the parabolic case
$\alpha=1$, see, e.g. \cite[Theorem 1, Chapter 9]{Friedman1964}.
In the subdiffusion case $\alpha<1$ the Green's function is defined by the
Fox H-functions, cf., e.g., \cite{EidelmanKochubei04}.

\section{Convergence of a fixed point scheme for final time data
}\label{sec:convergence}

We will now consider convergence of the fixed point scheme defined by the operator $\mathbb{T}$ defined by \eqref{eqn:opT_fiti} for reconstructing the reaction and interaction functions $f_i$ in \eqref{eqn:system_gen}.
To some extent we can here build on previous work for the case of one scalar
equation and a single function $f$ to be reconstructed in
\cite{KaltenbacherRundell:2019c}. However, it is also clear that the
interaction among several states can complicate the situation considerably and
lead to phenomena that would not be possible with single uncoupled equations.
Our aim in Section \oldref{sec:vecBB3} is to explore conditions for a scenario
that would allow to make some statements on self-mapping and contractivity of
$\mathbb{T}$. We are aware of the fact that this is far from capturing the
whole multitude of possibilities and interesting cases that can arise in
systems.
The theoretical results are illustrated by some examples of $2\times 2$ systems 
arising in systems biology.

Another extension made in this section is to get rid of the restriction on the spatially one dimensional setting that had to be imposed in \cite{KaltenbacherRundell:2019c} in order to enable certain Sobolev embeddings, in particular at the transition from $H^s(\Omega)$ for the space dependent function $f(g(x))$ to $W^{1,\infty}(J)$ for the univariate function $f(u)$. We do so by carrying out the analysis in Schauder spaces instead, which basically allows to use the same differentiability order for $f(g(x))$ and $f(u)$, independently of the dimension of $\Omega$. Since the required regularity results on PDE solutions are so far only available in the literature for $\alpha=1$, we restrict ourselves to this case in Section \oldref{sec:convSchauder}.

\subsection{Vectorization of the results from \cite{KaltenbacherRundell:2019c} in the spatially one-dimensional case}\label{sec:vecBB3}

Analogously to the proof of \cite[Theorem 3.1]{KaltenbacherRundell:2019c} we can establish $\mathbb{T}$ as a weakly * continuous self-mapping on a sufficiently large ball in $X$.
\begin{theorem}
Let $\alpha\in(\tfrac45,1]$, $\sigma\in(\frac32,2)$, $\theta\in(0,2-1/\alpha)$, $Q^*\geq \frac{2(2-\theta)}{2-\sigma}$, 
$\Omega\subseteq\mathbb{R}^1$ an open bounded interval, let \eqref{eqn:invert} hold 
and assume that $\kappa$ as well as 
$\bar{\rho}:=$ $\sup_{\zeta\in I} \|D_t r(\cdot,\cdot,\zeta)\|_{L^{Q^*}(0,T;L^2(\Omega))}$ are sufficiently small.\\
Then for large enough $\rho>0$ the operator $\mathbb{T}$ 
\Margin{R1 D (5)}
\revision{defined by \eqref{eqn:opT_fiti}}
is a self-mapping on the bounded, closed and convex set 
\[
B=\{h\in X\, : \, \|h_i\|_{W^{1,\infty}(J_i)}\leq \rho\,, \ \|h_i(\wfunc_i(\vec{u}_0))+r(\cdot,0,\vec{u}_0)-\mathbb{L}\vec{u}_0\|_{\dot{H}^\sigma(\Omega)}\leq \kappa\}
\]
and $\mathbb{T}$ is weakly* continuous in $X$ as defined in \eqref{eqn:X}.
Thus $\mathbb{T}$ has a fixed point in $B$.
\end{theorem}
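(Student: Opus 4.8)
The plan is to follow the scheme of the proof of \cite[Theorem 3.1]{KaltenbacherRundell:2019c}, carried out componentwise: show that $\mathbb{T}$ maps $B$ into itself, that it is weakly* sequentially continuous on $B$, and that $B$ is nonempty, convex and weakly* compact, so that a Schauder--Tychonoff fixed point argument applies in the weak* topology of $X=\prod_{i}W^{1,\infty}(J_i)$. That $B$ is weakly* compact I would deduce from the fact that each $J_i$ is a compact interval: a weakly* convergent sequence in the $\rho$-ball of $\prod_i W^{1,\infty}(J_i)$ converges uniformly by Arzel\`a--Ascoli, so its limit again lies in the $\rho$-ball, and the second, affine constraint in $B$ survives the limit by uniform convergence of $h_i(\wfunc_i(\vec u_0))$ together with weak lower semicontinuity of $\|\cdot\|_{\dot{H}^\sigma(\Omega)}$. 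Well-definedness of $\mathbb{T}$ on $B$ is where \eqref{eqn:invert} enters: by the Inverse Function Theorem $\wfunc_i\circ\vec g$ is an $H^2$-diffeomorphism of $\omega$ onto $J_i$ with derivative bounded below by $\beta$, so \eqref{eqn:opT_fiti} uniquely defines $(\mathbb{T}\vec h)_i$ on $J_i$ as the pullback of its right-hand side by $(\wfunc_i\circ\vec g)^{-1}$.

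The core of the self-mapping property is a uniform-in-$\vec h\in B$ regularity estimate for the solution $\vec u(\cdot,\cdot;\vec h)$ of the projected system \eqref{eqn:system_gen_proj}. Since $P\vec u$ takes values in the compact cuboid $I$ and $\|h_i\|_{W^{1,\infty}(J_i)}\le\rho$, the sources $h_i(\wfunc_i(P\vec u))+r_i(\cdot,\cdot,P\vec u)$ are bounded in $L^\infty(0,T;L^\infty(\Omega))$ by $C(1+\rho)$; plugging this into the smoothing estimates for the fractional resolvent families (the eigenfunction/Mittag--Leffler expansions recalled in \cite{KaltenbacherRundell:2019c}) and differentiating the equation in time, one represents $D_t^\alpha u_i(\cdot,T;\vec h)$ as a resolvent acting on the regular part $h_i(\wfunc_i(\vec u_0))+r_i(\cdot,0,\vec u_0)-(\mathbb{L}\vec u_0)_i$ of the initial data (whose $\dot{H}^\sigma(\Omega)$-norm is $\le\kappa$ on $B$) plus a time convolution of the time derivative of the $i$-th source term, the latter controlled in $L^{Q^*}(0,T;L^2(\Omega))$ by $\rho\,\|D_t\vec u\|_{L^{Q^*}(0,T;L^2)}+\bar\rho$; a Gronwall/maximal-regularity step then closes the estimate. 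The hypotheses $\alpha\in(\tfrac45,1]$, $\sigma\in(\tfrac32,2)$, $\theta\in(0,2-1/\alpha)$ and $Q^*\ge\frac{2(2-\theta)}{2-\sigma}$ are exactly what makes the intervening time integrals converge and the interpolation and embedding exponents fit. Because $\sigma>\tfrac32$, in one space dimension $\dot{H}^\sigma(\Omega)\hookrightarrow W^{1,\infty}(\Omega)$, and combining this with the chain rule and $|(\wfunc_i\circ\vec g)'|\ge\beta$ yields $\|(\mathbb{T}\vec h)_i\|_{W^{1,\infty}(J_i)}\le C_0+C_1(\kappa+\bar\rho)(1+\rho)$; choosing $\kappa,\bar\rho$ so small that $C_1(\kappa+\bar\rho)<1$ and then $\rho$ large makes the right-hand side $\le\rho$. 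The second constraint of $B$ for $\mathbb{T}\vec h$ is obtained the same way, since $(\mathbb{T}\vec h)_i(\wfunc_i(\vec u_0))+r_i(\cdot,0,\vec u_0)-(\mathbb{L}\vec u_0)_i$ is, up to the diffeomorphism, $D_t^\alpha$ of the next iterate's solution at $t=0$ and is bounded in $\dot{H}^\sigma(\Omega)$ by a multiple of $\kappa+\bar\rho$.

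For weak* continuity, if $\vec h^{(n)}\rightharpoonup^*\vec h$ in $X$ with $\vec h^{(n)}\in B$, then as above $h_i^{(n)}\to h_i$ uniformly on $J_i$, hence $h_i^{(n)}(\wfunc_i(P\cdot))\to h_i(\wfunc_i(P\cdot))$ uniformly on $I$; continuous dependence of the solution of \eqref{eqn:system_gen_proj} on its nonlinearity gives $\vec u(\cdot,\cdot;\vec h^{(n)})\to\vec u(\cdot,\cdot;\vec h)$ in $C(\overline\Omega\times[0,T])$, and the uniform $\dot{H}^\sigma$-bound from the previous step makes $\{D_t^\alpha u_i(\cdot,T;\vec h^{(n)})\}_n$ precompact in $W^{1,\infty}(\Omega)$ with limit pinned down by pointwise convergence; composing with the fixed diffeomorphism $(\wfunc_i\circ\vec g)^{-1}$ gives $(\mathbb{T}\vec h^{(n)})_i\to(\mathbb{T}\vec h)_i$ uniformly, in particular weakly* in $W^{1,\infty}(J_i)$. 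Since $\mathbb{T}:B\to B$ is weakly* sequentially continuous and $B$ is convex and weakly* compact, Schauder--Tychonoff yields a fixed point.

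The main obstacle is the uniform regularity estimate $D_t^\alpha u_i(\cdot,T;\vec h)\in\dot{H}^\sigma(\Omega)$ for the \emph{coupled} system: the spatial regularity of the source $h_i(\wfunc_i(P\vec u))$ is tied to that of $\vec u$ itself through the nonlinear $\wfunc_i$ and the cross-coupling in $\mathbb{L}$ and $r_i$, so one has to close a bootstrap in which the temporal integrability exponent $Q^*$ is balanced against the fractional smoothing rate — which is precisely where the hypotheses $\alpha>\tfrac45$, $\theta<2-1/\alpha$, $Q^*\ge\frac{2(2-\theta)}{2-\sigma}$ and the smallness of $\kappa$ and $\bar\rho$ are used. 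The remaining pieces — weak* compactness of $B$, the $\dot{H}^\sigma$-to-$W^{1,\infty}$ transfer in one dimension, and weak* continuity of $\mathbb{T}$ — are then routine vectorizations of the scalar arguments in \cite{KaltenbacherRundell:2019c}.
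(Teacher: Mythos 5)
Your proposal is correct and follows essentially the same route the paper takes: the paper gives no separate proof of this theorem but explicitly defers to a componentwise vectorization of the proof of Theorem 3.1 in \cite{KaltenbacherRundell:2019c}, which is precisely what you carry out — weak* compactness of $B$ via Arzel\`a--Ascoli on the compact intervals $J_i$, the uniform $\dot H^\sigma$-regularity estimate for the projected system with the exponent conditions on $\alpha,\sigma,\theta,Q^*$ and smallness of $\kappa,\bar\rho$ closing the self-mapping bound through the one-dimensional embedding $\dot H^\sigma(\Omega)\hookrightarrow W^{1,\infty}(\Omega)$ and the invertibility condition \eqref{eqn:invert}, followed by weak* continuity and a Schauder--Tychonoff argument. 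Your identification of the coupled-system bootstrap as the only genuinely new difficulty relative to the scalar case is accurate.
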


\medskip

Moreover, in the parabolic case $\alpha=1$, contractivity of $\mathbb{T}$ for sufficiently large final time $T$ follows as in \cite[Theorems 3.2, 3.3]{KaltenbacherRundell:2019c} from the fact that for $\vec{f}^{(1)}$, $\vec{f}^{(2)}\in X$, the difference $\mathbb{T}(\vec{f}^{(1)})-\mathbb{T}(\vec{f}^{(2)})= (\vec{u}^{(1)}-\vec{u}^{(2)})_t=\vec{z}$, where 
\begin{equation*}
\begin{aligned}
D_t z_i - (\mathbb{L}\vec{z})_i &= \sum_{j=1}^N \Bigl(
\Bigl[{f^{(1)}_i}'(\wfunc_i(\vec{u}^{(1)}))\tfrac{\partial\wfunc_i}{\partial\xi_j}(\vec{u}^{(1)}) + \tfrac{\partial r_i}{\partial\xi_j}(\vec{u}^{(1)})\Bigr]) z_j
\\ &\qquad
+ \Bigl[ 
{f^{(1)}_i}'(\wfunc_i(\vec{u}^{(1)}))\tfrac{\partial\wfunc_i}{\partial\xi_j}(\vec{u}^{(1)}) + \tfrac{\partial r_i}{\partial\xi_j}(\vec{u}^{(1)}) 
\\ &\qquad\qquad
-{f^{(2)}_i}'(\wfunc_i(\vec{u}^{(2)}))\tfrac{\partial\wfunc_i}{\partial\xi_j}(\vec{u}^{(2)}) + \tfrac{\partial r_i}{\partial\xi_j}(\vec{u}^{(2)})
\Bigr] D_t u^{(2)}_j
\Bigr)\mbox{ in }\Omega\times(0,T)\\
\frac{\partial z_i}{\partial\nu} + \gamma_i z_i &= 0\mbox{ on }\partial \Omega\times(0,T)\\
z_i(x,0)&=f^{(1)}_i(\wfunc_i(\vec{u}_0(x)))-f^{(2)}_i(\wfunc_i(\vec{u}_0(x)))\quad x\in\Omega
\end{aligned}
\end{equation*}
$i\in\{1,\ldots N\}$. The factor $D_t\vec{u}^{(2)}$ appearing in the right hand side of this PDE decays exponentially under certain conditions.
\begin{lemma}\label{lem:expdecay}
Let $\mathbb{L}$ be of the form \eqref{eqn:exampleL} where for some $c_Q>-\underline{\lambda}$, with $\underline{\lambda}>0$ the smallest eigenvalue of $-\nabla\cdot(\underline{A}\nabla\cdot)$ with homogeneous impedance boundary conditions, 
\Margin{R1 D (6)}
\begin{equation}\label{eqn:cQ}
Q(x)-M(x,t,\xi)-c_Q\revision{\mbox{id}} \mbox{ is nonnegative definite} \mbox{ for all }x\in\Omega,\ t>0, \ \xi\in I\,,
\end{equation}
(note that here the definition of nonnegative definiteness of a matrix here does not necessarily include its symmetry)
where 
\begin{equation}\label{eqn:M}
M_{i,j}(x,t,\xi):= f_i'(\wfunc_i(\xi)) \tfrac{\partial\wfunc_i}{\partial\xi_j}(\xi) + \tfrac{\partial r_i}{\partial\xi_j}(\xi)
\end{equation}
and assume that there exist constants $C_r,c_r>0$ such that 
\[
|D_t\vec{r}(x,t,\xi)|\leq C_r e^{-c_r t} \qquad x\in\Omega\,, \ t>0\,,\xi\in I\,,
\]
and that $D_tu_i(0)= (\mathbb{L}\vec{u}_0)+f_i(\wfunc_i(\vec{u}_0))+r_i(0,\vec{u}_0)\in L^\infty(\Omega)$.\\
Then there exist $C_2,c_2>0$ (more precisely, $c_2\in(0,\min\{c_r,\frac{\underline{\lambda}+c_Q}{2}\})$) such that the solution $\vec{u}$ of \eqref{eqn:system_gen}, \eqref{eqn:system_bdry}, \eqref{eqn:system_init} satisfies the exponential decay estimate 
\[
|D_t\vec{u}(x,t)|\leq C_2 e^{-c_2 t} \qquad x\in\Omega\,, \ t>0\,.
\]
\end{lemma}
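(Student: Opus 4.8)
\textit{Proof plan.} The plan is to differentiate the system in time and close an $L^2$ energy estimate for $\vec v:=D_t\vec u$ in which the structural hypothesis \eqref{eqn:cQ} becomes a spectral gap, and then to promote the resulting $L^2$ decay to the claimed pointwise bound by parabolic smoothing; this parallels the scalar argument in \cite{KaltenbacherRundell:2019c} (which is why we work in the parabolic case $\alpha=1$). First, parabolic regularity of the forward problem makes $\vec u$ twice differentiable in $t$ for $t>0$, and letting $t\downarrow 0$ in \eqref{eqn:system_gen} identifies the initial trace $D_t\vec u(0)$, which is the $L^\infty(\Omega)$ function assumed in the hypothesis. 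Differentiating \eqref{eqn:system_gen} and \eqref{eqn:system_bdry} in $t$ and applying the chain rule to $f_i\circ\wfunc_i$ and to $r_i(\cdot,\cdot,\vec u)$, the vector $\vec v$ solves the linear parabolic system
\[
D_t v_i + (\mathbb{L}\vec v)_i \;=\; \sum_{j=1}^N M_{ij}(x,t,\vec u(x,t))\, v_j \;+\; (D_t r_i)(x,t,\vec u(x,t)), \qquad i=1,\dots,N,
\]
with $M$ given by \eqref{eqn:M}, the homogeneous impedance conditions inherited from \eqref{eqn:system_bdry}, and $\vec v(\cdot,0)=D_t\vec u(0)$.

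Second, I would test this system with $\vec v$ and integrate over $\Omega$. Integration by parts turns the principal part of $\langle\mathbb{L}\vec v,\vec v\rangle$ into the (componentwise) Dirichlet form plus the boundary term $\sum_i\int_{\partial\Omega}\gamma_i|v_i|^2$, which is nonnegative by the dissipative impedance condition; hence by the variational characterization of the smallest eigenvalue, $\langle\mathbb{L}\vec v,\vec v\rangle \ge \underline\lambda\|\vec v\|_{L^2(\Omega)}^2+\langle Q\vec v,\vec v\rangle$. Subtracting $\langle M\vec v,\vec v\rangle$ and invoking \eqref{eqn:cQ} pointwise in $x$ — only the associated quadratic form matters, so symmetry is not needed — gives $\langle\mathbb{L}\vec v,\vec v\rangle-\langle M\vec v,\vec v\rangle \ge (\underline\lambda+c_Q)\|\vec v\|_{L^2(\Omega)}^2$. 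Estimating the source by Cauchy--Schwarz and the decay hypothesis $|D_t\vec r|\le C_r e^{-c_r t}$, and then spending half of the coercivity constant through Young's inequality, yields
\[
\frac{d}{dt}\|\vec v(t)\|_{L^2(\Omega)}^2 + (\underline\lambda+c_Q)\,\|\vec v(t)\|_{L^2(\Omega)}^2 \;\le\; \frac{C^2}{\underline\lambda+c_Q}\, e^{-2c_r t},
\]
and Gronwall's inequality then gives $\|\vec v(t)\|_{L^2(\Omega)}\le C_1 e^{-c_2 t}$ for every $c_2\in\bigl(0,\min\{c_r,\tfrac12(\underline\lambda+c_Q)\}\bigr)$, the borderline case $2c_r=\underline\lambda+c_Q$ contributing only a polynomial factor that is absorbed by lowering $c_2$ slightly. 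The factor $\tfrac12$ is precisely the price of the Young step, which accounts for the range of $c_2$ in the statement.

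Third, I would bootstrap from $L^2$ to $L^\infty$ without loss in the exponential rate. Since $\vec f\in X$ and $\vec{\wfunc},\vec r$ are smooth on the compact cuboid $I$, the coefficient $M$ is bounded, so writing $\vec v$ by the variation-of-constants formula on each window $[t-1,t]$, $t\ge1$, and using the smoothing estimate $\|S(t,s)\|_{L^2(\Omega)\to L^\infty(\Omega)}\le C(t-s)^{-\kappa}$ for the evolution family of the corresponding (bounded, time-dependent, zeroth-order) parabolic system — with $\kappa<1$, after, if the dimension requires it, splitting into finitely many substeps through intermediate $L^p$ spaces, cf.\ \cite[Chapter~9]{Friedman1964} — gives
\[
\|\vec v(t)\|_{L^\infty(\Omega)} \;\le\; C\|\vec v(t-1)\|_{L^2(\Omega)} + C\!\int_{t-1}^{t}(t-\tau)^{-\kappa}\|(D_t\vec r)(\tau,\vec u(\tau))\|_{L^2(\Omega)}\,d\tau \;\le\; C_2 e^{-c_2 t},
\]
using the second step, $|D_t\vec r|\le C_r e^{-c_r t}$ and $c_r\ge c_2$; on $[0,1]$ one bounds $\|\vec v(t)\|_{L^\infty(\Omega)}$ by well-posedness of the forward problem together with $\vec v(0)\in L^\infty(\Omega)$, and enlarging $C_2$ finishes the argument.

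I expect the main obstacle to be this last step: because the components are coupled both through $Q$ and through the generally non-cooperative matrix $M$, no scalar comparison or maximum principle is available, so one must rely on $L^2\!\to\!L^\infty$ smoothing for parabolic \emph{systems}, whose regularity theory is at hand only for $\alpha=1$ (this is exactly why the lemma is confined to the parabolic problem). A secondary, more routine difficulty is justifying the regularity of $\vec u$ used both to differentiate the equation and to identify the initial trace $D_t\vec u(0)$.
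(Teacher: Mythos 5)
Your proposal is correct in substance, but it follows a genuinely different route from the paper. The paper does \emph{not} abandon the maximum principle for the coupled system: it applies the scalar maximum principle to the single function $v(x,t)=\tfrac12|D_t\vec u(x,t)|^2=\tfrac12\sum_i (D_tu_i)^2$. Writing the parabolic inequality satisfied by $v$, the cross-coupling appears only through the quadratic form $-D_t\vec u^{\,T}\bigl(Q-M-c_Q\,\mathrm{id}\bigr)D_t\vec u$, which is nonpositive by \eqref{eqn:cQ}, and the gradient term $-\nabla D_t\vec u^{\,T}\underline{A}\nabla D_t\vec u$ is likewise dropped; after Young's inequality the right-hand side is bounded by $\tfrac{1}{2\varepsilon}|D_t\vec r|^2$, and comparison with the supersolution $\bar v$ of the corresponding scalar equation with exponentially decaying source gives the pointwise bound $v\le\bar v\lesssim e^{-\min\{2c_r,\underline\lambda+c_Q-\varepsilon\}t}$ directly (homogeneous boundary conditions being essential so that $v$ inherits a usable impedance condition). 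This is exactly the obstacle you flag as your main difficulty — ``no scalar comparison or maximum principle is available'' for the non-cooperative system — and the paper's trick dissolves it, so no $L^2\!\to\!L^\infty$ smoothing for systems is needed at all. Your route (test the differentiated system with $\vec v$, use the variational characterization of $\underline\lambda$ plus \eqref{eqn:cQ} as a spectral gap, Young, Gronwall, then bootstrap via the evolution-family smoothing on unit windows) uses the hypothesis \eqref{eqn:cQ} in precisely the same way, produces the same admissible range $c_2\in(0,\min\{c_r,\tfrac12(\underline\lambda+c_Q)\})$, and is sound modulo the standard (but heavier) Green's-function machinery for parabolic systems with impedance boundary conditions that you invoke in the last step; the paper's argument is shorter and stays entirely at the pointwise level, while yours is arguably more robust in situations where one only controls quadratic forms weakly (e.g.\ $x$-dependent $\underline A$ where the pointwise identity for $\triangle v$ is less clean).
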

\begin{proof}
%the maximum principle for parabolic system needed for the comparison argument in \cite[Section 3.3]{KaltenbacherRundell:2019c} can be found in \cite[Theorem 1]{Weinberger1975}.
Exponential decay of $D_t\vec{u}$ follows from the maximum principle applied
to the scalar function 
$v(x,t)=\frac12|D_t\vec{u}(x,t)|^2 = \frac12\sum_{i=1}^N D_t u_i(x,t)^2$.
In the case of \eqref{eqn:exampleL} this satisfies,
with $c_1=c_Q+\underline{\lambda}$, the equations
\begin{equation}\label{eqn:v}
\begin{aligned}
D_tv-\triangle v+(c_Q-\varepsilon) v
&= - \nabla D_t\vec{u}^T\underline{A}\nabla D_t\vec{u} - D_t\vec{u}^T (Q(x)-M(x,t,P\vec{u}(x,t))-c_Q 
\revision{\mbox{id}}) D_t\vec{u}\\
&\quad  -\frac{\varepsilon}{2} |D_t\vec{u}|^2+D_t\vec{u}^T D_t\vec{r}
\qquad\qquad\qquad
\mbox{ in }\ \Omega\times(0,T), \\ 
&\leq 
- D_t\vec{u}^T (Q(x)-M(x,t,P\vec{u}(x,t))-c_Q)\revision{\mbox{id}}) D_t\vec{u} +\tfrac{1}{2\epsilon}|D_t\vec{r}|^2 \\
\frac{\partial v}{\partial \nu}+\gamma v &= 0\qquad\qquad\qquad\qquad\qquad\qquad\qquad
 \mbox{ on }\ \partial\Omega\times(0,T)\,,\\
v(x,0)&=\frac12|D_t\vec{u}(0)|^2=\frac12\sum_{i=1}^N |-(\mathbb{L}\vec{u}_0)+f_i(\wfunc_i(\vec{u}_0))+r_i(0,\vec{u}_0)|^2\qquad x\in\Omega,
\end{aligned}
\end{equation}
where we have used Young's inequality.
The right hand side in \eqref{eqn:v} can be bounded by the exponentially decaying function $\frac{1}{2\varepsilon}|D_t\vec{r}(t)|^2$, provided $Q(x)-M(x,t,P\vec{u}(x,t))-c_Q\revision{\mbox{id}}$ is positive semidefinite for all $(x,t)\in\Omega\times(0,T)$, which is guaranteed by \eqref{eqn:cQ}.
By the maximum principle, this implies $v\leq \bar{v}$ pointwise in $\Omega\times(0,T)$, for the solution $\bar{v}$ of $\bar{v}_t-\triangle \bar{v}+(c_Q-\varepsilon) \bar{v}=\frac{1}{2\varepsilon}|D_t\vec{r}(t)|^2$ with initial data $\bar{v}(x,0)=\frac12|D_t\vec{u}(0)|^2$ and therefore (cf. \cite{Pazy92}) 
\[
v(x,t)\leq \bar{v}(x,t)\leq\tfrac{1}{4\varepsilon} e^{-\min\{2c_r,\underline{\lambda}+c_Q-\varepsilon\}t} |D_t\vec{u}(0)|^2
\] 
for all $(x,t)\in\Omega\times(0,T)$.
\end{proof}

\begin{remark}
A possible way to satisfy condition \eqref{eqn:cQ} is by sufficiently strong diffusion in the elliptic operator \eqref{eqn:exampleL} 
\[
\lambda_{min}(Q(x))+\underline{\lambda}>\sup_{t\in(0,T),\xi\in I,\zeta\in\mathbb{R}^N\setminus\{0\}}\tfrac{1}{|\zeta|^2}\zeta^T \,(\vec{f}'(\vec{\wfunc}(\zeta)\tfrac{d \vec{\wfunc}}{d \xi}(\xi)+\tfrac{d \vec{r}}{d \xi}(x,t,\xi))\,\zeta
\]
for all $x\in\Omega$, $t\in(0,T)$.

We mention in passing that for the proof of Lemma \oldref{lem:expdecay} it was essential to work with homogeneous boundary conditions.
\end{remark}

\medskip

Along the lines of the proof of \cite[Theorem 3.2]{KaltenbacherRundell:2019c} we therefore obtain the following contractivity result.
\begin{theorem} \label{th:contr1}
Let the assumptions of Lemma \oldref{lem:expdecay} 
\Margin{R1 D (7)}
\revision{hold and assume that}
\begin{equation}\label{eqn:c1hat}
\begin{aligned}
&\| {f^{(1)}_i}'(\wfunc_i(\vec{u}^{(1)}))\tfrac{\partial\wfunc_i}{\partial\xi_j}(\vec{u}^{(1)})+\tfrac{\partial r_i}{\partial\xi_j}(\vec{u}^{(1)})+\hat{Q}_{ij}\|_{L^\infty(0,T;L^\infty(\Omega))}\leq \hat{c}_1\,,
\end{aligned}
\end{equation}
for some positive semidefinite matrix $\hat{Q}(x)$ and $\hat{c}_1$ sufficiently small such that 
$4\hat{c}_1<\hat{\lambda}_1^2$, where $\hat{\lambda}_1$ is the smallest eigenvalue of the operator $-\hat{\mathbb{L}}:=-\mathbb{L}+\hat{Q}$, i.e., $4\hat{c}_1<\hat{\lambda}_1^2$. 
Moreover, assume that  $f^{(1)}_i,f^{(2)}_i\in W^{2,\infty}(J_i)$ and 
\begin{equation}\label{eqn:ass_norm}
\sum_{i=1}^N\|(f^{(1)}_i-f^{(2)}_i)(\wfunc_i(\vec{u}^{(2)}(t))\|_{H^1(\Omega)}^2\leq 
\bar{C}_1(g) \sum_{i=1}^N\|(f^{(1)}_i-f^{(2)}_i)(\wfunc_i(\vec{g}))\|_{H^1(\omega)} \,.
\end{equation}
Then there exists a constant $C>0$ depending only on $\|\vec{f}^{(1)}{}''\|_{L^\infty(I)}$, $\|(\mathbb{L}\vec{u}_0)+f_i(\wfunc_i(\vec{u}_0))+r_i(0,\vec{u}_0)\|_{L^\infty(\Omega)}$ and the constant $\bar{C}_1(g)$ in \eqref{eqn:ass_norm}, such that 
\[ 
\|(\mathbb{T}(f_1)-\mathbb{T}(f_2))(g)\|_{\dH{1}}\leq 
C e^{-(\hat{\lambda}_1-4\hat{c}_1/\hat{\lambda}_1)T/2} \|(f_1-f_2)(g)\|_{\dH{1}}\,.
\]
with $\hat{c}_1$ as in \eqref{eqn:c1hat}.
\end{theorem}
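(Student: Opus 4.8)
The strategy is to apply an energy (or maximum-principle) estimate to the linear parabolic system satisfied by $\vec z = (\vec u^{(1)}-\vec u^{(2)})_t$, exactly as in the scalar case of \cite[Theorem 3.2]{KaltenbacherRundell:2019c}, but carried out componentwise in $\dot H^1(\Omega)$. First I would rewrite the PDE for $\vec z$ in the form $D_t z_i - (\hat{\mathbb{L}}\vec z)_i = \sum_j \bigl(M^{(1)}_{ij}+\hat Q_{ij}\bigr)z_j + \bigl(\text{difference term}\bigr)_i D_t u^{(2)}_j$, where $M^{(1)}_{ij}= {f^{(1)}_i}'(\wfunc_i(\vec u^{(1)}))\tfrac{\partial\wfunc_i}{\partial\xi_j}(\vec u^{(1)})+\tfrac{\partial r_i}{\partial\xi_j}(\vec u^{(1)})$, so that the ``bad'' zeroth-order coupling coefficient on the right is precisely the quantity controlled by $\hat c_1$ in \eqref{eqn:c1hat}, while the genuinely inhomogeneous forcing involves $D_t\vec u^{(2)}$, which decays like $e^{-c_2 t}$ by Lemma \oldref{lem:expdecay}. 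Testing with $(-\hat{\mathbb{L}})\vec z$ (equivalently differentiating $\tfrac12\|\vec z\|_{\dot H^1}^2$) and using that $\hat\lambda_1$ is the smallest eigenvalue of $-\hat{\mathbb{L}}$, the coupling term contributes at most $\hat c_1\|\vec z\|_{L^2}\|\vec z\|_{\dot H^2}$ type contributions; Young's inequality and $4\hat c_1<\hat\lambda_1^2$ then yield a differential inequality $\tfrac{d}{dt}\|\vec z\|_{\dot H^1}^2 \le -(\hat\lambda_1-4\hat c_1/\hat\lambda_1)\|\vec z\|_{\dot H^1}^2 + (\text{source})$, whose integrating factor is $e^{(\hat\lambda_1-4\hat c_1/\hat\lambda_1)t}$.

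Next I would handle the source term. The difference bracket multiplying $D_t u^{(2)}_j$ splits, via adding and subtracting, into a part proportional to $f^{(1)}_i-f^{(2)}_i$ evaluated along $\vec u^{(1)}$ (or $\vec u^{(2)}$) and a part proportional to $\vec u^{(1)}-\vec u^{(2)}$ (from the difference of arguments in $\wfunc_i$ and $\partial r_i/\partial\xi_j$, controlled by $\|\vec f^{(1)}{}''\|_{L^\infty}$ and the Lipschitz constants of $\wfunc,\vec r$). The first, being of the form $(f^{(1)}_i-f^{(2)}_i)(\wfunc_i(\vec u^{(2)}(t)))$ times the exponentially decaying $D_t u^{(2)}_j$, is bounded in $\dot H^1(\Omega)$ using assumption \eqref{eqn:ass_norm}, which transfers the $H^1(\Omega)$ norm of $(f^{(1)}_i-f^{(2)}_i)(\wfunc_i(\vec u^{(2)}(t)))$ to the $H^1(\omega)$ norm of $(f^{(1)}_i-f^{(2)}_i)(\wfunc_i(\vec g))$, i.e., essentially to $\|(f_1-f_2)(g)\|_{\dot H^1}$ on the right-hand side of the claimed estimate; the second, proportional to $\vec u^{(1)}-\vec u^{(2)}$, is absorbed by noting $\vec u^{(1)}(t)-\vec u^{(2)}(t)=\int_0^t \vec z(s)\,ds$ plus the initial difference $(f^{(1)}-f^{(2)})(\wfunc(\vec u_0))$, so it feeds back into the Grönwall argument with an extra factor that is again exponentially small for large $T$. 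Multiplying the differential inequality by the integrating factor, integrating from $0$ to $T$, using $\|\vec z(0)\|_{\dot H^1}=\|(f^{(1)}-f^{(2)})(\wfunc(\vec u_0))\|_{\dot H^1}$ bounded by $\|(f_1-f_2)(g)\|_{\dot H^1}$ (up to a constant from \eqref{eqn:ass_norm}-type comparisons), and bounding the convolution $\int_0^T e^{-(\hat\lambda_1-4\hat c_1/\hat\lambda_1)(T-s)/2}e^{-c_2 s}\,ds$ by a constant times $e^{-\min\{(\hat\lambda_1-4\hat c_1/\hat\lambda_1)/2,\,c_2\}T}$, one collects all contributions into the single prefactor $C e^{-(\hat\lambda_1-4\hat c_1/\hat\lambda_1)T/2}$, with $C$ depending only on the listed quantities. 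Since $\mathbb{T}(f^{(1)})(g)-\mathbb{T}(f^{(2)})(g)=\vec z(\cdot,T)$ by definition \eqref{eqn:opT_fiti}, this is exactly the asserted contraction estimate.

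The main obstacle is the bookkeeping of the source term and, in particular, controlling the piece proportional to $\vec u^{(1)}-\vec u^{(2)}$ without losing the exponential gain: one must verify that the feedback term $\int_0^t\vec z(s)\,ds$ enters with a coefficient small enough (for $T$ large, or under the smallness of $\hat c_1,\bar\rho,\kappa$) that the Grönwall loop closes and still leaves the decay rate $(\hat\lambda_1-4\hat c_1/\hat\lambda_1)/2$ intact; this is the step where the dissipativity hypotheses of Lemma \oldref{lem:expdecay} and the smallness assumption \eqref{eqn:c1hat} are genuinely used. A secondary technical point is justifying the $\dot H^1$ energy estimate for the nonautonomous, non-symmetrically-coupled system — the coefficients $M^{(1)}_{ij}+\hat Q_{ij}$ need not be symmetric in $i,j$ — so one works with the symmetrized bound $\hat c_1$ from \eqref{eqn:c1hat} rather than with a spectral bound on the coupling matrix itself, just as in Lemma \oldref{lem:expdecay}.
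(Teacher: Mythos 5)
Your plan follows essentially the same route the paper takes: it writes the system for $\vec z=(\vec u^{(1)}-\vec u^{(2)})_t$ displayed just before Lemma \oldref{lem:expdecay}, runs a $\dot H^1$ energy/Gr\"onwall estimate with the shifted operator $\hat{\mathbb{L}}$ and the smallness $4\hat c_1<\hat\lambda_1^2$, and controls the forcing via the exponential decay of $D_t\vec u^{(2)}$ from Lemma \oldref{lem:expdecay} together with the norm-transfer assumption \eqref{eqn:ass_norm} — which is precisely the ``vectorized'' version of the proof of \cite[Theorem 3.2]{KaltenbacherRundell:2019c} that the paper invokes. (Only a minor bookkeeping remark: since $\vec u^{(1)}(0)=\vec u^{(2)}(0)=\vec u_0$, one has $\hat{\vec u}(t)=\int_0^t\vec z(s)\,ds$ with no separate initial-difference term for $\hat{\vec u}$; the initial difference $(f^{(1)}-f^{(2)})(\vec\wfunc(\vec u_0))$ enters only as $\vec z(0)$.)
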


Note that this result does not rely on any Sobolev embeddings and therefore remains valid for higher space dimensions, provided we can make sense of the condition \eqref{eqn:ass_norm}, which in one space dimension easily follows even in the general form
\begin{equation*}%\label{eqn:ass_norm_h}
\sum_{i=1}^N\|h_i(\wfunc_i(\vec{u}^{(2)}(t))\|_{H^1(\Omega)}^2\leq 
\bar{C}_1(g) \sum_{i=1}^N\|h_i(\wfunc_i(\vec{g}))\|_{H^1(\omega)} \quad
\forall \vec{h}\in W^{2,\infty}(J)
\end{equation*}
from uniform strict monotonicity of the functions $g_i$ and $x\mapsto u^{(2)}_i(x,t)$ for all $t\geq0$ as well as the range condition \eqref{eqn:range_sys}.

\medskip
\def\umax{u_{\max}}
\def\vmax{v_{\max}}
\subsubsection{Some Examples}
We now provide some examples of typical systems to which the analysis above applies.

\begin{example}\label{example_vec}
\begin{equation}\label{eqn:ex_vec}
D_t^\alpha u_i -\triangle u_i = f_i(u_i) \quad i\in\{1,\ldots N\}\,,
\end{equation}
where $\wfunc_i(\xi_1,\ldots,\xi_N)=\xi_i$, and \eqref{eqn:invert} is satisfied for strictly monotone and $H^2$ smooth $g_i$, with $J_i=I_i=g_i(\omega)$. 
The Jacobian of $\vec{w}$ is just the identity matrix and 
$M(x,t,\xi)= \mbox{diag}(f_1'(\xi_1) \ldots f_N'(\xi_N))$, so \eqref{eqn:cQ} is satisfied for monotonically decreasing functions $f_j$, or if the $f_j'$ have arbitrary sign but their positive values on $I_j$ are dominated by $\lambda_{\min}(Q(x))+\underline{\lambda}$. This clearly extends to 
\[
D_t^\alpha u_i -\triangle u_i = f_i(u_i) +r_i(\vec{u}) \quad i\in\{1,\ldots N\}\,,
\]
with sufficiently small interaction terms $r_i$.
A particular case of interest here is the so-called competing species interaction where 
$r_i(\vec{u})=-u_i\sum_{j\not=i}\beta_{ij}u_j$ with nonnegative coefficients $\beta_{ij}$, cf., e.g., \cite{Juengel10} and see Example \oldref{example_competing_species} below for the case $N=2$.
\end{example}

We now turn to some examples of $2\times 2$ systems, where condition \eqref{eqn:cQ} can be verified by applying the simple criterion
\begin{equation}\label{eqn:ABCD}
\left(\begin{array}{cc} A&B\\C&D\end{array}\right) \mbox{ nonnegative definite }\ \Longleftrightarrow \ A\geq0\,, \ D\geq0\,, \ 4AD\geq (B+C)^2\,.
\end{equation}

\begin{example}\label{example_competing_species}
\begin{equation}\label{eqn:ex_competing_species}
\begin{aligned}
D_t^\alpha u - \triangle u &= f_1(u)-\beta u\cdot v \\
D_t^\alpha v - \triangle v &= f_2(v)-\beta u\cdot v  
\end{aligned}
\end{equation}
with $\beta>0$.
Here $M(x,t,u,v)= \left(\begin{array}{cc}f_1'(u)-\beta v&-\beta u\\-\beta v& f_2'(v)-\beta u\end{array}\right)$, and we expect both $u$ and $v$ to be nonnegative, thus set 
\begin{equation}\label{eqn:I1I2_ex}
I_1=[0,\umax ]\,, \quad I_2=[0,\vmax ]
\end{equation}
for some upper bounds $\umax $, $\vmax $ $>0$.
Nonnegativity of $-M$ via \eqref{eqn:ABCD} is equivalent to
\[
-f_1'(u)+\beta v\geq0\,, \quad -f_2'(v)+\beta u\geq0 \,, \quad
4(-f_1'(u)+\beta v)(-f_2'(v)+\beta u)\geq \beta^2(u+v)^2\,.
\] 
A sufficient condition for this to hold is $-f_1'(u)\geq \frac{\beta}{2} u$, $-f_1'(v)\geq \frac{\beta}{2} v$ for all $u\in I_1$, $v\in I_2$. On the other hand, setting $v=0$ implies 
$-f_1'(u)\geq0$ and $-f_1'(u)\geq \frac{\beta^2u^2}{4(-f_2'(0)+\beta u)}$; likewise we get
$-f_2'(v)\geq0$ and $-f_2'(v)\geq \frac{\beta^2v^2}{4(-f_1'(0)+\beta v)}$.
This implies 
\[
\begin{aligned}
&\beta\leq\min_{u\in(0,\umax ]}\tfrac{2}{u}(-f_1'(u)+\sqrt{f_1'(u)^2-f_2'(0)f_1'(u)}) \\
&\beta\leq\min_{v\in(0,\vmax ]}\tfrac{2}{v}(-f_2'(v)+\sqrt{f_2'(v)^2-f_1'(0)f_2'(v)})\,,
\end{aligned}
\]
i.e., a restriction on the size of the interaction
$\;\beta u.v$, in terms of the reactions $\,\{f_1,f_2\}$.
\end{example}

%\begin{example}\label{example_competing_species_interaction}
%Identification of the interaction rather than the reaction terms in a (generalized) competing species model leads to
%\begin{equation}\label{eqn:ex_competing_species_interaction}
%\begin{aligned}
%D_t^\alpha u - \triangle u &= f_1(u\cdot v)+r_1(u) \\
%D_t^\alpha v - \triangle v &= f_2(u\cdot v)+r_2(v)  
%\end{aligned}
%\end{equation}
%With $M(x,t,u,v)= \left(\begin{array}{cc}f_1'(u\cdot v)v+r_1'(u)&f_1'(u\cdot v)u\\
%f_2'(u\cdot v)v&f_2'(u\cdot v)u+r_2'(v)
%\end{array}\right)$ and \eqref{eqn:I1I2_ex}+++
%\end{example}

\begin{example}\label{example_fu}
\begin{equation}\label{eqn:ex_fu}
\begin{aligned}
D_t^\alpha u - \triangle u &= f(u)+r_1(u,v) \\
D_t^\alpha v - \triangle v &= r_2(u,v) \\ 
\end{aligned}
\end{equation}
see, e.g., \cite[equations (1), (2), page 61]{Kuttler:17}, where typically $r_2(u,v)=-r_1(u,v)$ or at least the individual corresponding terms have opposite sign. More precisely, they are often of the form 
\begin{equation}\label{eqn:mak}
\begin{aligned}
r_1(u,v)&= k_{1+}u^{\nu_{11}}v^{\nu_{12}}-k_{1-}u^{\mu_{11}}v^{\mu_{12}}\\
r_2(u,v)&= k_{2+}u^{\nu_{21}}v^{\nu_{22}}-k_{2-}u^{\mu_{21}}v^{\mu_{22}}
\end{aligned}
\end{equation}
with nonnegative constants $k_{i\pm}$, $\nu_{ij}$, $\mu_{ij}$, which would be the characteristic for, e.g., mass-action kinetics.
In view of the practically relevant setting of nonnegative states, we will again consider the nonlinear functions on 
$I_1=[0,\umax ]\,, \quad I_2=[0,\vmax ]$.

In this example,
$M(x,t,u,v)= \left(\begin{array}{cc}f'(u)+r_{1,u}(u,v)&r_{1,v}(u,v)\\r_{2,u}(u,v)&r_{2,v}(u,v)\end{array}\right)$, so that nonnegativity of $-M$ via \eqref{eqn:ABCD} is equivalent to
\begin{eqnarray}
\label{eqn:ex_fu_I}
r_{2,v}(u,v)&\leq& 0 \quad \forall \ u\in [0,\umax ]\,, \ v\in [0,\vmax ] \\
\label{eqn:ex_fu_II}
f'(u)&\leq& -\sup_{v\in [0,\vmax ]} \left(r_{1,u}+\frac{(r_{1,v}+r_{2,u})^2(u,v)}{-4r_{2,v}(u,v)}\right) \quad \forall \ v\in [0,\vmax ]\,.
\end{eqnarray}
In case of the particular form \eqref{eqn:mak}, condition \eqref{eqn:ex_fu_I} is equivalent (by considering the asymptotics as $u,v\to0$) to 
\begin{equation}\label{eqn:ex_fu_makI}
%\mu_{22}k_{2-}\geq0 \ \mbox{ and } \  
\nu_{21}\geq\mu_{21} \ \mbox{ and } \ \nu_{22}\geq\mu_{22} \ \mbox{ and } \ 
\nu_{22}k_{2+}\umax ^{\nu_{21}-\mu_{21}} \vmax ^{\nu_{22}-\mu_{22}} \leq  \mu_{22}k_{2-}\,.
\end{equation}

The requirements of $f'\in L^\infty([0,\umax ])$, and $r_{1,u}\in L^\infty([0,\umax ]\times[0,\vmax ])$ lead to further restrictions on the exponents $\nu_{ij}$, $\mu_{ij}$ to avoid singularities at vanishing $u,v$.
\end{example}

\begin{example}\label{example_fv}
\begin{equation}\label{eqn:ex_fv}
\begin{aligned}
D_t^\alpha u - \triangle u &= f(v)+r_1(u,v) \\
D_t^\alpha v - \triangle v &= r_2(u,v) \\ 
\end{aligned}
\end{equation}
see, e.g., \cite[equations (18), (19), page 76]{Kuttler:17}.

Here, $M(x,t,u,v)= \left(\begin{array}{cc}r_{1,u}(u,v)&f'(v)+r_{1,v}(u,v)\\r_{2,u}(u,v)&r_{2,v}(u,v)\end{array}\right)$, so that nonnegativity of $-M$ via \eqref{eqn:ABCD} is equivalent to
\begin{eqnarray}
\label{eqn:ex_fv_I}
r_{1,u}(u,v)&\leq& 0\,, \ r_{2,v}(u,v)\leq 0 \quad \forall \ u\in [0,\umax ]\,, \ v\in [0,\vmax ] \\
\nonumber
|f'(v)+r_{1,v}(u,v)+r_{2,u}(u,v)|&\leq& 2\sqrt{r_{1,u}(u,v)\,r_{2,v}(u,v)} \quad \forall \ u\in [0,\umax ]\,, \ v\in [0,\vmax ]\,.\\
&&\label{eqn:ex_fv_II}
\end{eqnarray}
In the setting of \eqref{eqn:mak}, condition \eqref{eqn:ex_fv_I} is equivalent to
\begin{equation}\label{eqn:ex_fv_makI}
\begin{aligned}
%\mu_{22}k_{2-}>0 \ \mbox{ and } \  
\nu_{21}\geq\mu_{21} \ \mbox{ and } \ \nu_{22}\geq\mu_{22} \ \mbox{ and } \ 
\nu_{22}k_{2+}\umax ^{\nu_{21}-\mu_{21}} \vmax ^{\nu_{22}-\mu_{22}} \leq  \mu_{22}k_{2-}\\
%\mu_{11}k_{1-}>0 \ \mbox{ and } \  
\nu_{11}\geq\mu_{11} \ \mbox{ and } \ \nu_{12}\geq\mu_{12} \ \mbox{ and } \ 
\nu_{11}k_{1+}\umax ^{\nu_{11}-\mu_{11}} \vmax ^{\nu_{12}-\mu_{12}} \leq  \mu_{11}k_{1-}\,.
\end{aligned}
\end{equation}
\end{example}
Note that $f'\in L^\infty([0,\vmax ])$ together with \eqref{eqn:ex_fv_II} does not impose additional constraints on the exponents in \eqref{eqn:mak}.

\begin{example}\label{example_f1f2u}
\begin{equation}\label{eqn:ex_f1f2u}
\begin{aligned}
D_t^\alpha u - \triangle u &= f_1(u)+r_1(u,v) \\
D_t^\alpha v - \triangle v &= f_2(u)+r_2(u,v) \\ 
\end{aligned}
\end{equation}
see, e.g., \cite[page 89]{Kuttler:17}.

With $M(x,t,u,v)= \left(\begin{array}{cc}f_1'(u)+r_{1,u}(u,v)&r_{1,v}(u,v)\\f_2'(u)+r_{2,u}(u,v)&r_{2,v}(u,v)\end{array}\right)$, nonnegativity of $-M$ via \eqref{eqn:ABCD} is equivalent to
\begin{eqnarray}
\label{eqn:ex_f1f2_I}
&&f_1'(u)\leq -\sup_{v\in [0,\vmax ]} r_{1,u} \quad \forall \ v\in [0,\vmax ]
\,, \quad r_{2,v}(u,v)\leq 0 \nonumber\\ 
&& \hspace*{5cm}\forall \ u\in [0,\umax ]\,, \ v\in [0,\vmax ] \\
\label{eqn:ex_f1f2_II}
&&|f_2'(u)+r_{1,v}(u,v)+r_{2,u}(u,v)|\leq 2\sqrt{(f_1'(u)+r_{1,u}(u,v))\,r_{2,v}(u,v)} \nonumber\\ 
&& \hspace*{5cm} \forall \ u\in [0,\umax ]\,, \  v\in [0,\vmax ]
\end{eqnarray}
In the setting of \eqref{eqn:mak}, 
the right hand part of \eqref{eqn:ex_f1f2_I} coincides with \eqref{eqn:ex_fu_makI} in Example \eqref{eqn:ex_fu}.
Again, $f_i'\in L^\infty([0,\vmax ])$ together with \eqref{eqn:ex_f1f2_II} does not impose additional conditions on the exponents in \eqref{eqn:mak}.
\end{example}

\def\ug{\underline{g}}
\def\og{\overline{g}}
\subsection{Analysis in Schauder spaces in the parabolic case and higher space dimensions}\label{sec:convSchauder}

In the parabolic case $\alpha=1$, the availability of regularity results in Schauder spaces $C^{k,\beta}(\Omega\times(0,T))$, cf., e.g, \cite{Friedman1964}, allows to work in higher space dimensions $\Omega\subseteq \mathbb{R}^d$, $d>1$.
Thus these results are new as compared to those in \cite{KaltenbacherRundell:2019c} even in case of a single PDE. 
Note that the Schauder space setting has already been used in \cite{PilantRundell:1986,PilantRundell:1988} for the same nonlinearity identification problems in case of time trace (instead of final time) observations. 

For simplicity of exposition we here consider the scalar case
(abbreviating $D_tu$ by $u_t$ since there will be no further subscripts here) of recovering $f:J\to\mathbb{R}$ in 
\begin{equation}\label{eqn:PDEparabolicscalar}
\begin{aligned}
u_t(x,t)-\mathbb{L} u(x,t) &=f(u(x,t))+r(x,t) \qquad (x,t)\in\Omega\times(0,T) \\
\frac{\partial u}{\partial\nu} + \gamma u &= 0\qquad \mbox{ on }\partial \Omega\times(0,T) \\
u(x,0)&=u_0(x) \quad x\in\Omega
\end{aligned}
\end{equation}
with observations
\begin{equation}\label{eqn:obs_scalar}
g(x)=u(x,T)\quad x\in \omega\,.
\end{equation}
Note that analogously to Section \oldref{sec:vecBB3}, this can be extended to the system setting \eqref{eqn:system_gen}--\eqref{eqn:finaltimedata}, provided a higher dimensional (with respect to space) version of the condition \eqref{eqn:invert} holds and guarantees invertibility as well as smoothness of the mapping $\wfunc_i\circ\vec{g}:\omega\to J_i$. 

\subsubsection{Self-mapping fixed-point operator on spaces of Lipschitz continuous functions}

In order to work in function spaces over a fixed interval $J:= g(\omega)=[\ug,\og]$, like in \cite{KaltenbacherRundell:2019c} we project the values of $u$ onto $J$, which can as well be written by means of a superposition operator 
\[
(Pu)(x,t)= \max\{\ug ,\min\{\og ,u(x,t)\}\}=\Phi(u(x,t))
\]
with $\Phi=\mbox{id}$ on $[\ug ,\og ]$, $\Phi\equiv\ug $ on $(-\infty,\ug ]$, $\Phi\equiv
\og $ on $[\og ,\infty)$.
Note that $\Phi$ is contained in $W^{1,\infty}(\mathbb{R})=C^{0,1}(\mathbb{R})$ which will be sufficient for the proof of $\mathbb{T}$ being a self-mapping on $X=C^{0,1}(J)$.
Later on, when using higher order Schauder spaces to show contractivity of $\mathbb{T}$, the lack of additional smoothness of $\Phi$ will remain  an issue.

Moreover, we assume the range condition
\begin{equation}\label{eqn:rangecondSchauder}
J=[\ug,\og]=g(\omega)\supseteq u_{ex}(\Omega\times[0,T))
\end{equation}
to hold for any exact solution  $(f_{ex},u_{ex})$ of the inverse problem \eqref{eqn:PDEparabolicscalar}, \eqref{eqn:obs_scalar}.

Thus we define the fixed point operator $\mathbb{T}:X\to X$ by 
\begin{equation}\label{eqn:T_parabolicscalar0}
\mathbb{T} f(g(x)) = u_t(x,T;f) - (\mathbb{L}g)(x)-r(x,T)\quad x\in \omega\,,
\end{equation}
where for some $j\in C^{0,1}(J)$, the function $u(x,t;j)$ solves
\begin{equation}\label{eqn:PDEparabolicscalar_P}
\begin{aligned}
u_t-\mathbb{L} u &=j(\Phi(u))+r \qquad \mbox{ in }\Omega\times(0,T) \\
\frac{\partial u}{\partial\nu} + \gamma u &= 0\qquad \mbox{ on }\partial \Omega\times(0,T) \\
u(x,0)&=u_0(x) \quad x\in\Omega\,.
\end{aligned}
\end{equation}

Equation \eqref{eqn:T_parabolicscalar0} indeed uniquely determines $f^+:=\mathbb{T}f$ on $J$ if, e.g, $\omega$ is curve in $\Omega$ along which $g$ is strictly monotone.
Otherwise, the transition from the multivariate function $f^+(g):\omega\to\mathbb{R}$, defined on the $d$-dimensional domain $\omega$, to the real function $f:J\to\mathbb{R}$, defined on an interval $J\subseteq\mathbb{R}$, can be carried out by metric projection, cf. \cite[Lemma 3.1, Remark 3.1]{KaltenbacherRundell:2019c}.
We therefore redefine $\mathbb{T}:X\to X$ by 
\begin{equation}\label{eqn:T_parabolicscalar}
\mathbb{T} f = \mathbb{P}_g y \mbox{ where } y(x) = u_t(x,T;f) - (\mathbb{L}g)(x)-r(x,T)\quad x\in \omega
\end{equation}
and assume that the mapping $\mathbb{P}_g: C^{0,1}(\omega)\to C^{0,1}(J)$ satisfies the compatibility condition
\begin{equation}\label{eqn:compatPg}
f_{ex}=\mathbb{P}_g\left(u_{ex,t}(T) - \mathbb{L}g-r(T)\right)
\end{equation}
for an exact solution $(f_{ex},u_{ex})$ of the inverse problem \eqref{eqn:PDEparabolicscalar}, \eqref{eqn:obs_scalar},
and is continuous as a mapping from $C^{0,1}(\omega)$ to $C^{0,1}(J)$, i.e., for all $y\in C^{0,1}(\omega)$, the bound  
\begin{equation}\label{eqn:PgLip}
\|\mathbb{P}_g y\|_{C^{0,1}(J)}\leq C(g) \|y\|_{C^{0,1}(\omega)}
\end{equation}
holds. 
If, e.g, $\omega$ is a curve with a regular parametrization $\omega=\{x(\tau)\, : \tau\in[0,1]\}$ such that $\nabla g(x(\tau))\cdot \dot{x}(\tau)\geq \frac{1}{C(g)}|\dot{x}(\tau)|$ for all $\tau\in[0,1]$, then for $\mathbb{P}_g$ simply defined by $(\mathbb{P}_g w)(g(x))=w(x)$, $x\in\omega$, the estimate \eqref{eqn:PgLip} follows from 
\[
\begin{aligned}
\sup_{\xi\not=\eta\in J} \frac{|(\mathbb{P}_g w)(\xi)-(\mathbb{P}_g w)(\eta)|}{|\xi-\eta|}
&=\sup_{x\not=y\in\omega} \frac{|w(x)-w(y)|}{|g(x)-g(y)|}
=\sup_{\sigma\not=\tau\in[0,1]} \frac{|w(x(\sigma))-w(x(\tau))|}{|g(x(\sigma))-g(x(\tau))|}\\
&=\sup_{\sigma\not=\tau\in[0,1]} \frac{|w(x(\sigma))-w(x(\tau))|}{|x(\sigma)-x(\tau)|}
\frac{|x(\sigma)-x(\tau)|}{|g(x(\sigma))-g(x(\tau))|}
\end{aligned}\]
where 
\[
|g(x(\sigma))-g(x(\tau))|
=\left|\int_\tau^\sigma \nabla g(x(\rho))\cdot \dot{x}(\rho)\, d\rho\right|
\geq \frac{1}{C(g)} \int_\tau^\sigma |\dot{x}(\rho)|\, d\rho
\geq \frac{1}{C(g)} |x(\sigma)-x(\tau)|\,.
\]
\Margin{R1 B (6)}
Assumptions \eqref{eqn:rangecondSchauder} and \eqref{eqn:compatPg} imply that the $f$ part of any such solution $(f_{ex},u_{ex})$ is a fixed point of $\mathbb{T}$.

\medskip

We will now prove that $\mathbb{T}$ is a self-mapping on $X=C^{0,1}(J)$.
To do so, we use 
\Margin{R1 B (4)} 
\Margin{R1 B (5)} 
\revision{
the identity 
\begin{equation}\label{eqn:Tf-fex}
\begin{aligned}
\mathbb{T}f(g)-f_{ex}(g) &= (u_t(T;f)-\mathbb{L}g -r(T))-(u_{ex,t}(T)-\mathbb{L} u_{ex}(T)-r(T))
\\ &= u_t(T;f)-u_{ex,t}(T)
\,,
\end{aligned}
\end{equation}
to which we apply $\mathbb{P}_g$ to obtain, using \eqref{eqn:PgLip},
}
\[
\|\mathbb{T}f - f_{ex}\|_{C^{0,1}(J)}\leq  C(g) \|\hat{u}_t(T)\|_{C^{0,1}(\omega)}\,.
\]
where $\hat{u}(x,t)=u(x,t;f)-u_{ex}(x,t)$ with $u(x,t)=u(x,t;f)$ solving \eqref{eqn:PDEparabolicscalar_P} with $j=f$,
and therefore $\hat{u}=u-u_{ex}$ solves
\[
\begin{aligned}
\hat{u}_t-\mathbb{L} \hat{u} + \bar{c} \hat{u} & = 
f(\Phi(u))-f_{ex}(u) \qquad \mbox{ in }\Omega\times(0,T) \\
\frac{\partial \hat{u}}{\partial\nu} + \gamma \hat{u}&= 0\quad \mbox{ on }\partial \Omega\times(0,T) \\
\hat{u}(x,0)&=0 \quad  x\in\Omega\,.
\end{aligned}
\]
where $\bar{c}=-\frac{f_{ex}(u)-f_{ex}(u_{ex})}{u-u_{ex}} \in L^\infty((0,T)\times\Omega)$ since $f_{ex}\in C^{0,1}(J)$.
From \cite[Theorem 6, page 65]{Friedman1964} we obtain that 
\[
\begin{aligned}
\|\hat{u}_t(T)\|_{C^{0,1}(\omega)}
&\leq \|\hat{u}_t\|_{C^{0,1}((0,T)\times\Omega)}
\leq K \|f(\Phi(u))-f_{ex}(u)\|_{C^{0,1}((0,T)\times\Omega)}\\
%&\leq K (\|(f-f_{ex})(\Phi(u))\|_{C^{0,1}((0,T)\times\Omega)} + \|(f_{ex}(\Phi(u))-f_{ex}(u)\|_{C^{0,1}((0,T)\times\Omega)})
&\leq K \|f\circ\Phi-f_{ex}\|_{C^{0,1}(\mathbb{R})}
(1+\|u\|_{C^{0,1}((0,T)\times\Omega)})
\end{aligned}
\]
where $u=u_{ex}+\hat{u}$ and 
\[
\|\hat{u}\|_{C^{0,1}((0,T)\times\Omega)}\leq K \|f(\Phi(u))-f_{ex}(u)\|_{C((0,T)\times\Omega)}\leq \|f\circ\Phi-f_{ex}\|_{C(\mathbb{R})}\,.
\]
Thus, provided $f_{ex}\in C^{0,1}(J)$ and $u_{ex}\in C^{0,1}((0,T)\times\Omega)$, we can conclude from $f\in C^{0,1}(J)$ that also $\mathbb{T}f\in C^{0,1}(J)$.

\begin{theorem}
Under assumptions \eqref{eqn:rangecondSchauder}, \eqref{eqn:compatPg}, \eqref{eqn:PgLip}, the operator $\mathbb{T}$ defined by \eqref{eqn:T_parabolicscalar} is a self-mapping on $X=C^{0,1}(J)$.
\end{theorem}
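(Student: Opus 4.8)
The plan is to show directly that $\mathbb{T}$ maps $C^{0,1}(J)$ into itself by using the identity \eqref{eqn:Tf-fex}, which reduces the task to estimating $\|\hat u_t(T)\|_{C^{0,1}(\omega)}$ where $\hat u = u(\cdot,\cdot;f)-u_{ex}$. The key observation is that $f_{ex}$ is already known to be a fixed point of $\mathbb{T}$ by the compatibility condition \eqref{eqn:compatPg}, so it suffices to control $\mathbb{T}f - f_{ex}$ rather than $\mathbb{T}f$ itself; this is what makes the $L^\infty$-coefficient term $\bar c$ appear with a sign-free bound rather than needing any smallness.

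First I would write down the linear parabolic problem satisfied by $\hat u$, namely $\hat u_t - \mathbb{L}\hat u + \bar c\hat u = f(\Phi(u)) - f_{ex}(u)$ with homogeneous boundary and initial data, where $\bar c = -\frac{f_{ex}(u)-f_{ex}(u_{ex})}{u-u_{ex}}$ lies in $L^\infty((0,T)\times\Omega)$ precisely because $f_{ex}\in C^{0,1}(J)$ and, by the range condition \eqref{eqn:rangecondSchauder}, both $u_{ex}$ and $\Phi(u)$ take values in $J$ (note $u_{ex}(\Omega\times[0,T))\subseteq J$ so $\Phi(u_{ex})=u_{ex}$, hence $f_{ex}(\Phi(u))=f_{ex}(u)$ is consistent). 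Then I would invoke the Schauder regularity estimate \cite[Theorem 6, page 65]{Friedman1964} for linear parabolic equations with bounded coefficients to bound $\|\hat u_t\|_{C^{0,1}((0,T)\times\Omega)}$ and $\|\hat u\|_{C^{0,1}((0,T)\times\Omega)}$ by $\|f(\Phi(u))-f_{ex}(u)\|_{C^{0,1}((0,T)\times\Omega)}$ times a constant $K$ depending only on $\Omega,T,\mathbb{L}$ and $\|\bar c\|_{L^\infty}$. Finally, applying $\mathbb{P}_g$ via \eqref{eqn:PgLip} gives $\|\mathbb{T}f - f_{ex}\|_{C^{0,1}(J)}\leq C(g)\|\hat u_t(T)\|_{C^{0,1}(\omega)}$, and since $f_{ex}\in C^{0,1}(J)$ this yields $\mathbb{T}f\in C^{0,1}(J)$.

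The one genuinely delicate point is the passage from $\|f(\Phi(u))-f_{ex}(u)\|_{C^{0,1}((0,T)\times\Omega)}$ to $\|f\circ\Phi - f_{ex}\|_{C^{0,1}(\mathbb{R})}(1+\|u\|_{C^{0,1}((0,T)\times\Omega)})$: one must use the chain/composition rule for Lipschitz seminorms, that $\mathrm{Lip}(h\circ u)\leq \mathrm{Lip}(h)\,\mathrm{Lip}(u)$, together with the fact that $\Phi\in C^{0,1}(\mathbb{R})$ with Lipschitz constant $1$ so that $f\circ\Phi - f_{ex} = f\circ\Phi - f_{ex}\circ\Phi$ on the relevant range (again using $\Phi = \mathrm{id}$ on $J$ and the range condition), making $f\circ\Phi-f_{ex}$ a $C^{0,1}(\mathbb{R})$ function. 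This is the place where the $C^{0,1}$ (rather than higher-order Schauder) setting is essential, since $\Phi$ has no more regularity; the authors flag exactly this as the obstruction that will resurface when proving contractivity. A subtlety to handle carefully is that $u = u(\cdot,\cdot;f)$ is itself only known to be $C^{0,1}$ a priori, so one must close the loop: well-posedness of \eqref{eqn:PDEparabolicscalar_P} with Lipschitz right-hand side gives $u\in C^{0,1}((0,T)\times\Omega)$ (via the estimate on $\hat u$ plus $u_{ex}\in C^{0,1}$), so the bound $(1+\|u\|_{C^{0,1}})$ is finite. Assembling these pieces gives the stated self-mapping property; no smallness of $\rho$, $\bar\rho$ or $T$ is needed at this stage, which is consistent with the theorem being stated without such hypotheses.
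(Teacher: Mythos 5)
Your proposal is correct and follows essentially the same route as the paper: the identity \eqref{eqn:Tf-fex}, the linear parabolic problem for $\hat u=u-u_{ex}$ with the $L^\infty$ coefficient $\bar c$, the Schauder estimate from \cite[Theorem 6, page 65]{Friedman1964}, the Lipschitz composition bound $\|f(\Phi(u))-f_{ex}(u)\|_{C^{0,1}}\leq \|f\circ\Phi-f_{ex}\|_{C^{0,1}(\mathbb{R})}(1+\|u\|_{C^{0,1}})$, and finally \eqref{eqn:PgLip}. Your added remarks on closing the loop for $\|u\|_{C^{0,1}}$ and on $\Phi$ having only $C^{0,1}$ regularity are consistent with (and in the latter case explicitly flagged by) the paper's argument.
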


\subsubsection{Contractivity in higher order Schauder spaces}

In order to prove contractivity of $\mathbb{T}$, we need to move on to higher order Schauder spaces $X\subseteq C^{2,\beta}(\mathbb{R})$ for some fixed $\beta\in(0,1]$. 
The PDE estimates we will use for this purpose will rely on the Schauder space regularity theory for parabolic equations from \cite{Friedman1964}. Note that in view of the counterexample \cite[Problem 4.9]{GilbargTrudinger}, the H\"older exponent $\beta$ needs to be strictly positive.

A first attempt to circumvent the lack of higher smoothness of the projection operator would be to replace $\Phi$ in \eqref{eqn:PDEparabolicscalar_P} by a smoothed version $\Phi^\epsilon$. However, in order to prove convergence of the resulting approximation as $\epsilon\to0$, we would need uniform boundedness of $\Phi_\epsilon'$ in $C^{0,\beta}(\mathbb{R})$, which -- as can be readily checked -- is not possible, though.
Note that this lack of smoothness of $\Phi$ was not an issue in \cite{KaltenbacherRundell:2019c}, where we worked in Sobolev spaces, since the superposition operator induced by $\Phi'$ is Lipschitz as 
\Margin{R1 D (8)}
\revision{an} 
operator from $L^q$ to $L^p$ for any $1\leq p<q<\infty$, cf. \cite{HintermullerItoKunisch02}. 

We therefore achieve confinement to the observable range $J=g(\omega)$ in a
different manner, namely by definition of the spaces in which we work as 
\begin{equation}\label{eqn:XSchauder}
X = \{j\in C^{2,\beta}(\mathbb{R})\, : \, j'=0 \mbox{ on }\mathbb{R}\setminus J\}.
\end{equation}
Thus $f_{ex}\!\in\! X$ implies the necessary conditions ${f_{ex}}'(\ug )={f_{ex}}'(\og )={f_{ex}}''(\ug )={f_{ex}}''(\og )=0$ on the exact solution $f_{ex}$.
To circumvent these conditions, one might,
in place of just enforcing constant values outside $J$,
impose $C^{2,\beta}$ smooth extrapolation by defining
$X=\{j\in C^{2,\beta}(\mathbb{R})\, : \, j\vert_{[\og,\infty)}\in \Pi([\og,\infty))\,, \  j\vert_{(-\infty,\ug]} \in \Pi((-\infty,\ug])\}$
for some low dimensional spaces $\Pi([\og,\infty))$, $\Pi((-\infty,\ug])$
of polynomials or rational functions. However, this would not allow to estimate
the global $C^{2,\beta}(\mathbb{R})$ norm of $f$ by its corresponding norm on
the observable part $J$.
We therefore remain with the space $X$ defined by \eqref{eqn:XSchauder}.
Note that for all $j\in X$ the identity 
\[
\|j\|_{C^{k,\beta}(J)}=\|j\|_{C^{k,\beta}(\mathbb{R})} \quad k\in\{0,1,2\} 
\]
holds.

In order to map from final time data defined on $\omega$ to functions defined on $J$ (or actually on all of $\mathbb{R}$ here), we will again use an operator $\mathbb{P}_g: C^{2,\beta}(\omega)\to X$ and in place of \eqref{eqn:PgLip}, assume bounds with respect to the stronger spaces to hold, i.e., for all $y\in C^{2,\beta}(\omega)$,
\begin{equation}\label{eqn:Pg}
\|\mathbb{P}_g y\|_{C^{2,\beta}(\mathbb{R})}=\|\mathbb{P}_g y\|_{C^{2,\beta}(J)}\leq C(g) \|y\|_{C^{2,\beta}(\omega)}\,.
\end{equation}

Summarizing, we define $\mathbb{T}:X\to X$ by \eqref{eqn:T_parabolicscalar} with $u(x,t,j)$ denoting the solution of
\begin{equation}\label{eqn:PDEparabolicscalar_noP}
\begin{aligned}
u_t-\mathbb{L} u &=j(u)+r \qquad \mbox{ in }\Omega\times(0,T) \\
\frac{\partial u}{\partial\nu} + \gamma u &= 0\qquad \mbox{ on }\partial \Omega\times(0,T) \\
u(x,0)&=u_0(x) \quad x\in\Omega\,,
\end{aligned}
\end{equation}
differently from the previous subsection, where we used \eqref{eqn:PDEparabolicscalar_P}.

\medskip

\paragraph{Contractivity for small $u_{ex,t}$}:\\
We will now prove that $\mathbb{T}$ is a self-mapping on a $C^{2,\beta}$ ball with appropriately chosen radius $\rho>0$, and that it contracts the error $\|f-f_{ex}\|_{C^{2,\beta}(J)}$. 
To this end, we use the definition \eqref{eqn:T_parabolicscalar} together with \eqref{eqn:Pg} and the fact that the function defined by $z(x,t):= u_t(x,t;f)-u_{ex,t}(x,t)$ on $\Theta:=\Omega\times(0,T)$ satisfies the parabolic initial boundary value problem
\begin{equation}\label{eqn:z}
\begin{aligned}
z_t-\mathbb{L} z - {f_{ex}}'(u_{ex}) z & = \bigl\{f'(u)-{f_{ex}}'(u_{ex})\bigr\}  u_t\\ 
& = \bigl\{\int_0^1 {f_{ex}}''(u_{ex}+\theta\hat{u})\, d\theta \, \hat{u}
+(f-{f_{ex}})'(u)\bigr\} u_t 
%\\&\hspace*{7cm} 
\quad\mbox{ in }\Omega\times(0,T) \\
\frac{\partial z}{\partial\nu} + \gamma z&= 0\quad \mbox{ on }\partial \Omega\times(0,T) \\
z(x,0)&=(f-f_{ex})(u_0(x)) \quad  x\in\Omega\,.
\end{aligned}
\end{equation}
Here $u=u(\cdot,\cdot;f)$ solves \eqref{eqn:PDEparabolicscalar_noP} with $j=f$,
and $\hat{u}=u-u_{ex}$ solves
\[
\begin{aligned}
\hat{u}_t-\mathbb{L} \hat{u} - \int_0^1{f_{ex}}'(u_{ex}+\theta\hat{u})\, d\theta \, \hat{u} & = 
(f-f_{ex})(u) \qquad \mbox{ in }\Omega\times(0,T) \\
\frac{\partial \hat{u}}{\partial\nu} + \gamma \hat{u}&= 0\quad \mbox{ on }\partial \Omega\times(0,T) \\
\hat{u}(x,0)&=0 \quad  x\in\Omega\,.
\end{aligned}
\]

Using \eqref{eqn:Pg} as well as the identity \eqref{eqn:Tf-fex}
we obtain that
\[
\|\mathbb{T}f - f_{ex}\|_{C^{2,\beta}(J)}\leq  C(g) \|z(T)\|_{C^{2,\beta}(\omega)}\,.
\]
\revision{
To estimate $\|z(T)\|_{C^{2,\beta}(\omega)}$, we apply \cite[Theorem 6, page 65]{Friedman1964},
together with the fact that for the space-time cylinder $\Theta=(0,T)\times\Omega$, the space $C^{0,\beta}(\Theta)$ with the norm 
\[
\|w\|_{C^{0,\beta}(\Theta)}=\|w\|_{C(\Theta)}+\sup_{(x,t)\not=(y,s)\in \Theta}\frac{|w(x,t)-w(y,s)|}{\sqrt{|x-y|^2+|t-s|}^\beta}
\]
is a Banach algebra i.e., $\|v\cdot w\|_{C^{0,\beta}(\Theta)}\leq \|v\|_{C^{0,\beta}(\Theta)} \|w\|_{C^{0,\beta}(\Theta)}$ for all $v,w\in C^{0,\beta}(\Theta)$, and, moreover
\[
\|j(w)\|_{C^{0,\beta}(M)} \leq \|j\|_{C^{0,\beta}(\mathbb{R})}(1+\|w\|_{C^{0,1}(M)})
\]
holds for all $j\in C^{0,\beta}(\mathbb{R})$, $w\in C^{0,\beta}(M)$ and either $M=\Theta=(0,T)\times\Omega$ or $M=\Omega$.
\\
Assuming $f_{ex}'(u_{ex})\in C^\beta(\Theta)$, we get, for $\Theta_t=(0,t)\times\Omega$, 
\begin{equation*}%\label{eqn:estzr0}
\begin{aligned}
&\|z\|_{C([0,t];C^{2,\beta}(\Omega))}\\
&\qquad\leq
%\|z_t\|_{C^{0,\beta}(\Theta_t)}+
\sum_{|m|\leq2}\|D^m_x z\|_{C^{0,\beta}(\Theta_t)}\\
&\qquad\leq K\Bigl(\|\int_0^1 {f_{ex}}''(u_{ex}+\theta\hat{u})\, d\theta \, \hat{u} + (f-{f_{ex}})'(u)\|_{C^{0,\beta}(\Theta)} \|u_t\|_{C^{0,\beta}(\Theta_t)} + \|(f-f_{ex})(u_0(x))\|_C^{2,\beta}(\Omega)\\
&\qquad\leq K\Bigl(\|{f_{ex}}''\|_{C^{0,\beta}(J)} (1+\|u_{ex}\|_{C^{0,1}(\Theta_t)}+\|\hat{u}\|_{C^{0,1}(\Theta_t)})\, \|\hat{u}\|_{C^{0,\beta}(\Theta_t)})\\
&\qquad\qquad+ \|(f-{f_{ex}})'\|_{C^{0,\beta}(J)} (1+\|u_{ex}\|_{C^{0,1}(\Theta_t)}+\|\hat{u}\|_{C^{0,1}(\Theta_t)})\Bigr)\,UZ
%(\|u_{ex,t}\|_{C^{0,\beta}(\Theta_t)}+\|z\|_{C^{0,\beta}(\Theta_t)})
\\ &\qquad\qquad+ \|f-{f_{ex}}\|_{C^{2,\beta}(J)} (1+\|u_0\|_{C^{0,1}(\Omega)}) 
(d^2\|u_0\|_{C^{1,\beta}(\Omega)}^2 + \|u_0\|_{C^{2,\beta}(\Omega)}+1)\Bigr)
\end{aligned}
\end{equation*}
where
\begin{equation*}
\begin{aligned}
UZ &=(\|u_{ex,t}\|_{C^{0,\beta}(\Theta_t)}+\|z\|_{C^{0,\beta}(\Theta_t)})\,.\\
%\|z\|_{C^{0,\beta}(\Theta_t)}
%&\leq \|z^r\|_{C^{0,\beta}(\Theta_t)}+\|z^0\|_{C^{0,\beta}(\Theta_t)}
%\leq \sum_{|m|\leq2}\|D^m_x z^r\|_{C^{0,\beta}(\Theta_t)} 
%+\|z^0\|_{C^{0,\beta}(\Theta_t)}\,.\\
\end{aligned}
\end{equation*}
But the latter estimate does not yield contractivity, since for this we would need an estimate by a small multiple of $\|f-{f_{ex}}\|_{C^{2,\beta}(J)}$.
}

Thus we split $z=z^r+z^0$ into a part $z^r$ satisfying the inhomogeneous PDE 
\[
z^r_t-\mathbb{L} z^r - {f_{ex}}'(u_{ex}) z^r = 
\bigl\{\int_0^1 {f_{ex}}''(u_{ex}+\theta\hat{u})\, d\theta \, \hat{u}
+(f-{f_{ex}})'(u)\bigr\} u_t \mbox{ in }\Omega\times(0,T) 
\]
with homogeneous initial conditions $z^r(x,0)=0$
and a part $z^0$ satisfying the homogeneous PDE $z^0_t-\mathbb{L} z^0 - {f_{ex}}'(u_{ex}) z^0 = 0$ with inhomogeneous initial conditions 
\[
z^0 (x,0)=(f-f_{ex})(u_0(x)) \quad  x\in\Omega\,.
\]
To estimate the $z^r$, we apply \cite[Theorem 6, page 65]{Friedman1964}
which yields
\begin{equation}\label{eqn:estzr0}
\begin{aligned}
\|z^r\|_{C([0,t];C^{2,\beta}(\Omega))}
%&\leq
%\|z^r_t\|_{C^{0,\beta}(\Theta_t)}+
%\sum_{|m|\leq2}\|D^m_x z^r\|_{C^{0,\beta}(\Theta_t)}\\
%&\leq K\|\int_0^1 {f_{ex}}''(u_{ex}+\theta\hat{u})\, d\theta \, \hat{u} + (f-{f_{ex}})'(u)\|_{C^{0,\beta}(\Theta)} \|u_t\|_{C^{0,\beta}(\Theta_t)} \\
&\leq K\Bigl(\|{f_{ex}}''\|_{C^{0,\beta}(J)} (1+\|u_{ex}\|_{C^{0,1}(\Theta_t)}+\|\hat{u}\|_{C^{0,1}(\Theta_t)})\, \|\hat{u}\|_{C^{0,\beta}(\Theta_t)})\\
&\qquad+ \|(f-{f_{ex}})'\|_{C^{0,\beta}(J)} (1+\|u_{ex}\|_{C^{0,1}(\Theta_t)}+\|\hat{u}\|_{C^{0,1}(\Theta_t)})\Bigr)\,UZ\,,
%(\|u_{ex,t}\|_{C^{0,\beta}(\Theta_t)}+\|z\|_{C^{0,\beta}(\Theta_t)})\,.
\end{aligned}
\end{equation}
where $UZ$ is as above and
\begin{equation*}
\begin{aligned}
%UZ &=(\|u_{ex,t}\|_{C^{0,\beta}(\Theta_t)}+\|z\|_{C^{0,\beta}(\Theta_t)})\,.\\
\|z\|_{C^{0,\beta}(\Theta_t)}
&\leq \|z^r\|_{C^{0,\beta}(\Theta_t)}+\|z^0\|_{C^{0,\beta}(\Theta_t)}
\leq \sum_{|m|\leq2}\|D^m_x z^r\|_{C^{0,\beta}(\Theta_t)} 
+\|z^0\|_{C^{0,\beta}(\Theta_t)}\,.
\end{aligned}
\end{equation*}
Here
\[
\begin{aligned}
\|\hat{u}\|_{C^{0,1}(\Theta_t)}&\leq 
%\|\hat{u}_t\|_{C^{0,\beta_0}(\Theta_t)}+
\sum_{|m|\leq2}\|D^m_x \hat{u}\|_{C^{0,\beta_0}(\Theta_t)}\\
&\leq K \|(f-f_{ex})(u)\|_{C^{0,\beta_0}(\Theta_t)} \leq
K \|f-{f_{ex}}\|_{C^{0,\beta_0}(J)} (1+\|u_{ex}\|_{C^{0,1}(\Theta_t)}+\|\hat{u}\|_{C^{0,1}(\Theta_t)})
\end{aligned}
\]
for any $\beta_0>0$, thus, for $\|f-{f_{ex}}\|_{C^{0,\beta_0}(J)}\leq\rho_0<\frac{1}{K}$,
\[
\|\hat{u}\|_{C^{0,\beta}(\Theta_t)}
\leq \|\hat{u}\|_{C^{0,1}(\Theta_t)}
\leq \frac{1}{1-\rho_0 K}
\|f-f_{ex}\|_{C^{0,\beta_0}(J)} 
(1+\|u_{ex}\|_{C^{0,1}(\Theta_t)})\,.
\]
So for $\|(f-{f_{ex}})'\|_{C^{0,\beta}(J)}\leq\rho_1$ with $\rho_1<\frac{1}{K}$, we end up with an estimate for $z^r$ of the form
\begin{equation}\label{eqn:estzr}
\begin{aligned}
&\|z^r\|_{C([0,t];C^{2,\beta}(\Omega))}\\
&\leq C(K,\rho_0,\rho_1) (\|f-{f_{ex}}\|_{C^{1,\beta}(J)}
(1+\|u_{ex}\|_{C^{0,1}(\Theta_t)}) (\|u_{ex,t}\|_{C^{0,\beta}(\Theta_t)}+\|z^0\|_{C^{0,\beta}(\Theta_t)})\,.
\end{aligned}
\end{equation}

\revision{Now we attempt to achieve a contractive (with respect to $f-f_{ex}$) estimate of} $z^0$. To this end, we use its series expansion in terms of the eigenvalues and -functions $(\lambda_n,\phi_n)_{n\in\mathbb{N}}$ of the 
\revision{self-adjoint}
elliptic operator \revision{$\mathbb{L}+q\cdot$, where $q\in L^2(\Omega)$, $q\approx- {f_{ex}}'(u_{ex})$, e.g., $q=-\frac{1}{T}\int_0^T{f_{ex}}'(u_{ex}(t))\, dt$.}
This yields
\[
\begin{aligned}
z^0(x,t)&=\sum_{n=1}^\infty \Bigl(e^{-\lambda_n t} \langle (f-f_{ex})(u_0),\phi_n\rangle 
\revision{+\int_0^t e^{-\lambda_n (t-s)} \langle ({f_{ex}}'(u_{ex}(s))+q)\, z^0(s),\phi_n\rangle\, ds}\Bigr)\phi_n(x)\\
&\revision{=: z^{0,1}(x,t) + z^{0,2}(x,t)}\,.
\end{aligned}
\]
Via Sobolev's embedding with $\sigma > d/2+2+\beta$, 
\revision{we get, for any $\tau\leq t$,} 
\begin{equation*}%\label{eqn:estz0C2beta}
\begin{aligned}
\|\revision{z^{0,1}(\tau)}\|_{C^{2,\beta}(\Omega)}&\leq C_{\dot{H}^\sigma,C^{2,\beta}}^\Omega
\left(\sum_{n=1}^\infty \lambda_n^\sigma e^{-2\lambda_n \tau} \langle (f-f_{ex})(u_0),\phi_n\rangle^2\right)^{1/2}\\
&\leq  C_{\dot{H}^\sigma,C^{2,\beta}}^\Omega 
\sup_{\lambda\geq\lambda_1}\lambda^{\sigma/2-1} e^{-\lambda \tau}
\|(f-f_{ex})(u_0)\|_{\dot{H}^2(\Omega)}\\
&\leq  C_{\dot{H}^\sigma,C^{2,\beta}}^\Omega 
\Psi(\tau;\sigma,\lambda_1)
C_\mathbb{L}\|f-f_{ex}\|_{C^2(J)} \|u_0\|_{\dot{H}^2(\Omega)}
\end{aligned}
\end{equation*}
with $C_{\mathbb{L}}$ such that 
$\|\mathbb{L} j(v)\|_{L^2(\Omega)}\leq C_{\mathbb{L}} \|j\|_{C^2(\mathbb{R})} \|v\|_{\dot{H}^2(\Omega)}$ for all $j\in C^2(\mathbb{R})$, $v\in \dot{H}^2(\Omega)$ and
\begin{equation}\label{eqn:Psi}
\Psi(\tau;\sigma,\lambda_1) = \begin{cases} 
(\sigma/2-1)^{\sigma/2-1} e^{1-\sigma/2}\, \tau^{1-\sigma/2}&\mbox{ for }t\leq \frac{\sigma-2}{2\lambda_1}\\
\lambda_1^{\sigma/2-1} e^{-\lambda_1 \tau}&\mbox{ for }t\geq \frac{\sigma-2}{2\lambda_1}
\,.\end{cases}
\end{equation}
\revision{
and 
\[
\begin{aligned}
&\|z^{0,2}(\tau)\|_{C^{2,\beta}(\Omega)}\leq C_{\dot{H}^\sigma,C^{2,\beta}}^\Omega
\left(\int_0^\tau\sum_{n=1}^\infty \lambda_n^\sigma e^{-2\lambda_n (\tau-s)} 
\langle ({f_{ex}}'(u_{ex}(s))+q)z^0(s),\phi_n\rangle^2\, ds\right)^{1/2}\\
&\leq  C_{\dot{H}^\sigma,C^{2,\beta}}^\Omega C(\Omega)
\left(\int_0^\tau\Psi(\tau-s;\sigma,\lambda_1)\|{f_{ex}}'(u_{ex}(s))+q\|_{H^2(\Omega)}^2\, ds \right)^{1/2}
\|z^0\|_{C([0,t];C^2(\Omega))}\,,
\end{aligned}
\]
hence, assuming 
\begin{equation}\label{eqn:fexprimeq}
C_{\dot{H}^\sigma,C^{2,\beta}}^\Omega C(\Omega)\int_0^\tau \Psi(\tau-s;\sigma,\lambda_1)^2\|{f_{ex}}'(u_{ex}(s))+q\|_{H^2(\Omega)}^2\,ds \, \leq c<1
\end{equation}
for some constant $c$ and all $\tau\in[0,T]$, we get the estimate
\[
\|z^{0,2}(\tau)\|_{C^{2,\beta}(\Omega)}\leq c(\|z^{0,1}\|_{C([0,\tau];C^2(\Omega))}+\|z^{0,2}\|_{C([0,\tau];C^2(\Omega))}),
\]
hence, taking the supremum over $\tau\in[0,t]$ on both sides 
\begin{equation}\label{eqn:estz0C2beta_02}
\|z^{0,2}\|_{C([0,t];C^{2,\beta}(\Omega))}\leq \tfrac{1}{1-c}\,\|z^{0,1}\|_{C([0,\tau];C^2(\Omega))}
\end{equation}
i.e.,
\begin{equation}\label{eqn:estz0C2beta}
\|z^0\|_{C([0,t];C^{2,\beta}(\Omega)}\leq\tfrac{2-c}{1-c}\,
C_{\dot{H}^\sigma,C^{2,\beta}}^\Omega 
\sup_{\tau\in[0,t]}\Psi(\tau;\sigma,\lambda_1)
C_\mathbb{L}\|f-f_{ex}\|_{C^2(J)} \|u_0\|_{\dot{H}^2(\Omega)}\,.
\end{equation}
}
Likewise, for $\sigma_0 > d/2+\beta$, we have
\begin{equation}\label{eqn:estz0C0beta}
\|z^0\|_{\revision{C([0,t];}C^{0,\beta}(\Omega))}\leq 
C_{\dot{H}^{\sigma_0},C^{2,\beta}}^\Omega 
\revision{\sup_{\tau\in[0,t]}\Psi(\tau;\sigma_0,\lambda_1)}
C_\mathbb{L}\|f-f_{ex}\|_{C^2(J)} \|u_0\|_{\dot{H}^2(\Omega)}\,,
\end{equation}
where we can avoid the potential singularity of $\Psi(t;\sigma_0,\lambda_1)$ at $t=0$ in \eqref{eqn:Psi} by assuming
\begin{equation}\label{eqn:sigma0}
2\geq \sigma_0 > d/2+\beta
\end{equation}
which still admits the three-dimensional space case.
(Note that this avoidance is not possible for $\Psi(t;\sigma,\lambda_1)$ due to the requirement $\sigma > d/2+2+\beta$ made above.)
\\
\revision{However, a problem occurs in \eqref{eqn:estz0C2beta}, since due to the singularity at $\tau=0$ of $\Psi(\tau;\sigma,\lambda_1)$, the factor $\sup_{\tau\in[0,t]}\Psi(\tau;\sigma,\lambda_1)$ is not finite. Recall that the time dependence of ${f_{ex}}'(u_{ex})$ led to the convolution part $z^{02}$ that forced us to take the supremum over $\tau\in[0,t]$ to arrive at \eqref{eqn:estz0C2beta_02}.
Thus, the part $z^0$ of $z$ corresponding to the initial condition cannot be controlled in this setting and we need to remove it by assuming the initial condition $(f-f_{ex})(u_0)$ to vanish.
This is in line with existing results on decay of solutions to autonomous equations, see, e.g., \cite{Lunardi:1995}, that require time periodicity of the coefficient -- in our case $f_{ex}'(u_{ex})$, which is not available here, though. 
}

Doing so,
we end up with an estimate of the form 
\[
\begin{aligned}
&\|\mathbb{T}f-f_{ex}\|_{C^{2,\beta}(J)} \\
&\leq C(K,\rho_0,\rho_1,\mathbb{L},\Omega) (1+\|u_{ex}\|_{C^{0,1}(\Theta)}) 
\|u_{ex,t}\|_{C^{0,\beta}(\Theta)}
\, \|f-{f_{ex}}\|_{C^2(J)}
\,.
\end{aligned}
\]
provided $\|f-{f_{ex}}\|_{C^{2,\beta}(J)}\leq\rho$ small enough.
This yields self-mapping and contractivity on a ball of radius $\rho$ around $f_{ex}$ 
\revision{in 
\begin{equation}\label{eqn:XSchauder0}
X = \{j\in C^{2,\beta}(\mathbb{R})\, : \, j'=0 \mbox{ on }\mathbb{R}\setminus J
\revision{\,, \ (j-f_{ex})(u_0)=0}
\}.
\end{equation}
}
if  
$\|u_{ex,t}\|_{C^{0,\beta}(\Theta)}$ 
%and $\|u_0\|_{\dot{H}^2(\Omega)}$ 
is sufficiently small.
%If $T\geq \frac{\sigma-2}{2\lambda_1}$, cf. \eqref{eqn:Psi}, then $\|u_0\|_{\dot{H}^2(\Omega)}$ comes with a factor that decays exponentially with $T$, i.e., the estimate takes the form
%\[
%\begin{aligned}
%&\|\mathbb{T}f-f_{ex}\|_{C^{2,\beta}(J)} \\
%&\leq C(K,\rho,\mathbb{L},\Omega) (1+\|u_{ex}\|_{C^{0,1}(\Theta)}) 
%(\|u_{ex,t}\|_{C^{0,\beta}(\Theta)}+e^{-\lambda_1 T}\|u_0\|_{\dot{H}^2(\Omega)})
%\, \|f-{f_{ex}}\|_{C^{2,\beta}(J)}
%\,,
%\end{aligned}
%\]
%thus contractivity can be achieved also with large initial data, provided $T$ is large enough.
\begin{theorem} \label{th:selfmapping_Schauder}
Let $2>d/2+\beta$, and \eqref{eqn:Pg} hold, and assume that ${f_{ex}}\in X$ defined as in \eqref{eqn:XSchauder0}\\
Then there exist $\rho$, $\kappa$ $>0$, such that for $\|u_{ex,t}\|_{C^{0,\beta}(\Theta)}
\leq\kappa$,
 the operator $\mathbb{T}$ is a self-mapping on $B_\rho^X(f_{ex})=\{j\in C^{2,\beta}(\mathbb{R})\, : \, j'=0 \mbox{ on }\mathbb{R}\setminus J \,, \ 
\revision{(j-f_{ex})(u_0)=0\,, \ } \|j-f_{ex}\|_{C^{2,\beta}(J)}\leq \rho\}$.

Moreover, the contraction estimates
\[
\|\mathbb{T}f_0-f_{ex}\|_{C^{2,\beta}(J)} \leq q \|f_0-f_{ex}\|_{C^2(J)}\,, \qquad
\|\mathbb{T}^nf_0-f_{ex}\|_{C^{2,\beta}(J)} \leq q^n \|f_0-f_{ex}\|_{C^2(J)}
\]
hold for some $q\in(0,1)$ and any $f_0\in B_\rho^X(f_{ex})$.
\end{theorem}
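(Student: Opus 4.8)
The plan is to consolidate the estimates developed in the discussion above into a genuine contraction, the decisive new point being that membership of $f_0$ in $X$ as in \eqref{eqn:XSchauder0} annihilates the initial datum of the difference equation \eqref{eqn:z}, thereby removing the component of its solution that could not otherwise be controlled. Note first that, by \eqref{eqn:rangecondSchauder} and \eqref{eqn:compatPg}, $f_{ex}$ is a fixed point of $\mathbb{T}$.

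\emph{Step 1 (reduction to a parabolic estimate).} First I would fix $f_0\in B_\rho^X(f_{ex})$ and put $z(x,t):=u_t(x,t;f_0)-u_{ex,t}(x,t)$, which solves \eqref{eqn:z}. By \eqref{eqn:Tf-fex} one has $(\mathbb{T}f_0-f_{ex})(g(x))=z(x,T)$ on $\omega$, so applying $\mathbb{P}_g$ and using \eqref{eqn:Pg} gives
\[
\|\mathbb{T}f_0-f_{ex}\|_{C^{2,\beta}(J)}\leq C(g)\,\|z(\cdot,T)\|_{C^{2,\beta}(\omega)}\leq C(g)\,\|z\|_{C([0,T];C^{2,\beta}(\Omega))}\,,
\]
while $\mathbb{T}f_0\in X$ because $\mathbb{P}_g$ maps into $X$. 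Since $f_0\in X$ forces $(f_0-f_{ex})(u_0)\equiv0$, the initial condition of \eqref{eqn:z} vanishes, so in the splitting $z=z^r+z^0$ the homogeneous part $z^0$ solves a linear parabolic problem with zero data and hence $z^0\equiv0$, i.e.\ $z=z^r$.

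\emph{Step 2 (a contraction constant proportional to $\|u_{ex,t}\|_{C^{0,\beta}(\Theta)}$).} Next I would invoke the Schauder estimate \eqref{eqn:estzr} for $z^r$, which is available once $\rho$ is small enough to enforce the thresholds $\|f_0-f_{ex}\|_{C^{0,\beta_0}(J)}\leq\rho_0<\tfrac1K$ and $\|(f_0-f_{ex})'\|_{C^{0,\beta}(J)}\leq\rho_1<\tfrac1K$ behind it, and which already absorbs the self-referential $z^r$-term inside $UZ$; in \eqref{eqn:estzr} the summand $\|z^0\|_{C^{0,\beta}(\Theta_t)}$ now drops, leaving
\[
\|z\|_{C([0,T];C^{2,\beta}(\Omega))}\leq C(K,\rho_0,\rho_1)\bigl(1+\|u_{ex}\|_{C^{0,1}(\Theta)}\bigr)\,\|u_{ex,t}\|_{C^{0,\beta}(\Theta)}\,\|f_0-f_{ex}\|_{C^{1,\beta}(J)}\,.
\]
Chaining with Step~1 and the elementary bounded embedding $C^2(J)\hookrightarrow C^{1,\beta}(J)$ on the compact interval $J$ produces a constant $C_*$, depending only on $K,\rho_0,\rho_1,\mathbb{L},\Omega,g$ and $\|u_{ex}\|_{C^{0,1}(\Theta)}$, with
\[
\|\mathbb{T}f_0-f_{ex}\|_{C^{2,\beta}(J)}\leq C_*\,\|u_{ex,t}\|_{C^{0,\beta}(\Theta)}\,\|f_0-f_{ex}\|_{C^2(J)}\,.
\]

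\emph{Step 3 (choice of constants; self-mapping and iteration).} Finally I would fix $\rho>0$ meeting the smallness demands of Step~2 — none of which involves $\kappa$ — and then $\kappa>0$ with $q:=C_*\kappa<1$; for $\|u_{ex,t}\|_{C^{0,\beta}(\Theta)}\leq\kappa$ the last display is precisely the asserted one-step contraction. Self-mapping on $B_\rho^X(f_{ex})$ follows from $\mathbb{T}f_0\in X$ together with $\|\mathbb{T}f_0-f_{ex}\|_{C^{2,\beta}(J)}\leq q\|f_0-f_{ex}\|_{C^2(J)}\leq q\|f_0-f_{ex}\|_{C^{2,\beta}(J)}\leq q\rho<\rho$, and the iterated bound then follows by induction on $n$, alternating the one-step estimate with $\|\cdot\|_{C^2(J)}\leq\|\cdot\|_{C^{2,\beta}(J)}$ and using self-mapping so that the estimate applies at each stage. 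The real obstacle — the one the preceding discussion is built around — is that without the constraint $(f_0-f_{ex})(u_0)\equiv0$ one must bound $z^0$ in $C^{2,\beta}$: its eigenfunction series produces, through the convolution term $z^{0,2}$, a supremum over $\tau\in[0,t]$ of $\Psi(\tau;\sigma,\lambda_1)$ with $\sigma>d/2+2+\beta$, which is infinite owing to the singularity of \eqref{eqn:Psi} at $\tau=0$ (the weaker $C^{0,\beta}$-bound \eqref{eqn:estz0C0beta} survives because it needs only $2\geq\sigma_0>d/2+\beta$, whence the dimension hypothesis $2>d/2+\beta$). Restricting the iteration to the space \eqref{eqn:XSchauder0} removes $z^0$ entirely, at the cost of the structural conditions it encodes — ${f_{ex}}'={f_{ex}}''=0$ at $\ug,\og$ and agreement of every admissible iterate with $f_{ex}$ on the range of $u_0$ — after which only routine bookkeeping remains.
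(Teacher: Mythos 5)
Your proposal is correct and follows essentially the same route as the paper: the identity \eqref{eqn:Tf-fex} with \eqref{eqn:Pg} reduces everything to bounding $z(T)$, the constraint $(f_0-f_{ex})(u_0)=0$ built into \eqref{eqn:XSchauder0} kills the initial-value part $z^0$ (which is exactly the term the paper shows cannot be controlled in $C^{2,\beta}$ because of the singularity of $\Psi$ at $\tau=0$), and the Schauder estimate \eqref{eqn:estzr} with the smallness thresholds $\rho_0,\rho_1<1/K$ then yields the contraction factor proportional to $\|u_{ex,t}\|_{C^{0,\beta}(\Theta)}$. Your bookkeeping of the constants and the induction for $\mathbb{T}^n$ matches the paper's intended argument.
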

\revision{
Note that assuming $u_0=0$ we get 
$X \supset \{j\in C^{2,\beta}(\mathbb{R})\, : \, j'=0 \mbox{ on }\mathbb{R}\setminus J
\,, \ (j-f_{ex})(0)=0\}$.
}
\begin{corollary}\label{cor:uniquenessSchauder}
Under the assumptions of Theorem \oldref{th:selfmapping_Schauder}, there exists at most one $C^2$ solution $f_{ex}$ of the inverse problem within any ball of radius $\rho$ in $X$.
\end{corollary}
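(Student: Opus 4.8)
The plan is the classical uniqueness-from-contractivity argument, so only a sketch is needed; essentially all of the work is already contained in Theorem \oldref{th:selfmapping_Schauder}.

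First I would record that any $C^2$ solution of the inverse problem is a fixed point of the \emph{same} operator $\mathbb{T}$. Indeed, $\mathbb{T}$ as in \eqref{eqn:T_parabolicscalar}, with $u(\cdot,\cdot;j)$ solving \eqref{eqn:PDEparabolicscalar_noP}, depends only on the data $g$, $r$, $u_0$, on $\mathbb{L}$, and on the fixed mapping $\mathbb{P}_g$, and not on any particular solution; and for a $C^2$ solution $(f_{ex},u_{ex})$ of \eqref{eqn:PDEparabolicscalar}, \eqref{eqn:obs_scalar} we have $u(\cdot,\cdot;f_{ex})=u_{ex}$, so the identity \eqref{eqn:Tf-fex} together with the compatibility \eqref{eqn:compatPg} (valid by the range condition \eqref{eqn:rangecondSchauder}) gives $\mathbb{T}f_{ex}=f_{ex}$.

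Next, suppose $f^{(1)}$ and $f^{(2)}$ are two $C^2$ solutions lying in a ball of radius $\rho$ in $X$. I would apply Theorem \oldref{th:selfmapping_Schauder} with the exact solution taken to be $f^{(1)}$ (its hypotheses --- $f^{(1)}\in X$ as in \eqref{eqn:XSchauder0} and $\|u_{ex,t}\|_{C^{0,\beta}(\Theta)}\le\kappa$ --- being precisely the standing ``assumptions of Theorem \oldref{th:selfmapping_Schauder}''), obtaining $q\in(0,1)$ with $\|\mathbb{T}f-f^{(1)}\|_{C^{2,\beta}(J)}\le q\|f-f^{(1)}\|_{C^2(J)}$ for all $f\in B_\rho^X(f^{(1)})$. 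Evaluating at $f=f^{(2)}$, which is a fixed point of $\mathbb{T}$ by the previous step, and using $\|\cdot\|_{C^2(J)}\le\|\cdot\|_{C^{2,\beta}(J)}$, yields
\[
\|f^{(2)}-f^{(1)}\|_{C^{2,\beta}(J)}\le q\,\|f^{(2)}-f^{(1)}\|_{C^2(J)}\le q\,\|f^{(2)}-f^{(1)}\|_{C^{2,\beta}(J)},
\]
so $(1-q)\|f^{(2)}-f^{(1)}\|_{C^{2,\beta}(J)}\le0$ and hence $f^{(1)}=f^{(2)}$ on $J$. Since both functions lie in $X$ and therefore have vanishing derivative on $\mathbb{R}\setminus J$, each is constant on $(-\infty,\ug]$ and on $[\og,\infty)$ with the constants equal to the (already matching) boundary values $f^{(i)}(\ug)$, $f^{(i)}(\og)$; so $f^{(1)}=f^{(2)}$ on all of $\mathbb{R}$. (Equivalently, invoke $\|j\|_{C^{k,\beta}(J)}=\|j\|_{C^{k,\beta}(\mathbb{R})}$ for $j\in X$.) This proves that a ball of radius $\rho$ in $X$ can contain at most one $C^2$ solution.

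The ``hard part'' here is only bookkeeping, since the analytic heavy lifting (the Schauder estimates of \cite{Friedman1964}, the splitting $z=z^r+z^0$, the decay exponent $\Psi$ in \eqref{eqn:Psi}, and the removal of the initial-data component $z^0$ via the constraint $(j-f_{ex})(u_0)=0$ built into $X$) is all already done in Theorem \oldref{th:selfmapping_Schauder}. The one point to watch is the matching of radii: to legitimately evaluate the contraction estimate at $f^{(2)}$ one needs $f^{(2)}\in B_\rho^X(f^{(1)})$, which is guaranteed once the two solutions are assumed to lie in a common ball of radius $\rho/2$. One therefore either states the corollary for balls of radius $\rho/2$, or simply relabels the radius produced by Theorem \oldref{th:selfmapping_Schauder}; no additional estimates are required.
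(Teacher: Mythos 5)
Your argument is correct and is precisely the (implicit) argument the paper intends: both $C^2$ solutions are fixed points of the same operator $\mathbb{T}$, so the contraction estimate of Theorem \oldref{th:selfmapping_Schauder} applied with one of them in the role of $f_{ex}$ forces them to coincide on $J$, hence on all of $\mathbb{R}$ by the structure of $X$. The radius-bookkeeping remark ($\rho/2$ versus $\rho$) is a sensible refinement of the statement but does not change the substance.
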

Note that this result is valid in space dimensions $d\in[1,4-2\beta)$, so in particular also for $d=3$, as long as $\beta<\frac12$.

\medskip

\paragraph{Contractivity for monotonically decreasing $f$}:\\
To avoid the smallness assumption on $\|u_{ex,t}\|_{C(\Theta)}$ in Theorem \oldref{th:selfmapping_Schauder}, we can make use of exponential decay of $u_t$ in case of monotonically decreasing $f$ and exponentially decaying $r_t$.

To this end, we assume existence of a nonnegative, only space dependent potential $\check{q}$, that is sufficiently close to the space and time dependent function ${f_{ex}}'(u_{ex})$
\begin{equation}\label{eqn:c1check}
\| {f_{ex}}'(u_{ex})+\check{q}\|_{L^\infty(\Omega\times(0,T))}\leq \check{c}_1\,,
\end{equation}
and rewrite \eqref{eqn:z} as
\begin{equation}\label{eqn:z1}
\begin{aligned}
z_t-\mathbb{L} z + \check{q} z & = ({f_{ex}}'(u_{ex})+\check{q}) z + y\, u_t =: r_I+r_{II}
 \quad \mbox{ in }\Omega\times(0,T) \\
\frac{\partial z}{\partial\nu} + \gamma z&= 0\quad \mbox{ on }\partial \Omega\times(0,T) \\
z(x,0)&=(f-f_{ex})(u_0) \quad  x\in\Omega\,,
\end{aligned}
\end{equation}
where
\begin{equation}\label{eqn:y}
y=\int_0^1 {f_{ex}}''(u_{ex}+\theta\hat{u})\, d\theta \, \hat{u}\, 
+(f-{f_{ex}})'(u)\,.
\end{equation}

Moreover, we take advantage of the fact that without the initial data term 
%i.e., if $(f-f_{ex})(u_0)=0$, 
we gain regularity of $f-f_{ex}$ by iterating, i.e., by \eqref{eqn:estzr}, we have 
\begin{equation}\label{eqn:contrest1}
\|\mathbb{T}^2f-f_{ex}\|_{C^{2,\beta}(J)} 
\leq 
C(g)C(K,\rho_0,\rho_1) (\|\mathbb{T}f-{f_{ex}}\|_{C^{1,\beta}(J)}
(1+\|u_{ex}\|_{C^{0,1}(\Theta)}) \|u_{ex,t}\|_{C^{0,\beta}(\Theta)}
\end{equation}
which puts us into the favorable position of only having to estimate the $C^{1,\beta}$ norm instead of the $C^{2,\beta}$ norm of $(\mathbb{T} f-f_{ex})(g)=z(T)$.
Note that the choice \eqref{eqn:sigma0} that enabled us to work in higher space dimensions required us to use the $H^2$ norm of $(f-f_{ex})(u_0)$, which we estimated by the $C^2$ norm of $f-f_{ex}$. To avoid the term $\|f-f_{ex}\|_{C^2(J)}$ in the right hand side of \eqref{eqn:estz0C2beta}, we therefore impose $(f-f_{ex})(u_0)=0$, which can, e.g., be achieved by assuming $f(0)=f_{ex}(0)$ and $u_0\equiv0$.

To achieve contractivity for large enough final time $T$, we will additionally assume that $\mathbb{L}$ is of the form 
\begin{equation}\label{eqn:LA}
\mathbb{L} = \nabla\cdot (\underline{A}\nabla\cdot) \mbox{ with }\underline{A}\in \mathbb{R}^{d\times d} \mbox{ positive definite},
\end{equation}
that $r_t$ is exponentially decaying, and that $f$ is monotonically decreasing $f'\leq0$, which altogether implies that $u_t$ appearing in the right hand side of \eqref{eqn:z} decays exponentially
\begin{equation}\label{eqn:expdecayu2}
\|u_t(t)\|_{C(\Omega)}\leq C_2 e^{-c_2t}
\end{equation}
as in \cite[Section 3.3]{KaltenbacherRundell:2019c} and Lemma \oldref{lem:expdecay} above.  

In order to make use of dissipativity in estimating the $C^{1,\beta}(\Omega)$ norm of $z(T)=(\mathbb{T}f-\mathbb{T}f_{ex})(g)$, we take a small deviation via Sobolev spaces. Namely, we use continuity of the embeddings $W^{\theta,p}(0,t)\to C(0,t)$ and $W^{2-2\theta,p}(\Omega)\to C^{1,\beta}(\Omega)$ for $\theta\in(0,1)$, $\theta>\frac{1}{p}$, $1-2\theta>\frac{d}{p}+\beta$ (which can always be achieved by choosing $p\in [1,\infty)$ sufficiently large) and apply interpolation, as well as maximal $L^p$ regularity of the operator $A=-\mathbb{L}+\check{q}$ cf. \cite[Proposition 8]{Latushkin2006} to the equation \eqref{eqn:z1}.
The latter together with causality of the equation yields, for any $\mu\in(0,\check{\lambda}_1)$, where $\check{\lambda}_1$ the smallest eigenvalue of $A$, and the functions defined by 
\[
z_\mu(x,t)=e^{\mu t} z(x,t), \quad r_{I,\mu}(x,t)=e^{\mu t} r_I(x,t), \quad r_{II,\mu}(x,t)=e^{\mu t} r_{II}(x,t)
\]
that 
\begin{equation}\label{eqn:CA}
\begin{aligned}
\|z_\mu\|_{W^{1,p}(0,t;L^p(\Omega))}+\|z_\mu\|_{L^p(0,t;W^{2,p}(\Omega))} 
\leq C^A_{\mu,p} \|r_{I,\mu}+r_{II,\mu}\|_{L^p(\Omega\times(0,t))} 
\end{aligned}
\end{equation}
with a constant $C^A_{\mu,p}$ independent of $t$.
This together with interpolation and the fact that the norm of the embedding $W^{\theta,p}(0,t)\to C(0,t)$ is independent of $t$ (by Morrey's inequality)
yields
\begin{equation}\label{eqn:expmuz}
\begin{aligned}
\|e^{\mu t} z(t)\|_{C^{1,\beta}(\Omega)}
&\leq C_{W^{\theta,p},C}^{\mathbb{R^+}} C_{W^{2-2\theta,p},C^{1,\beta}}^\Omega \|z_\mu\|_{W^{\theta,p}(0,t;W^{2-2\theta,p}(\Omega))}\\
&\leq C_{W^{\theta,p},C}^{\mathbb{R^+}} C_{W^{2-2\theta,p},C^1}^\Omega C^A_{\mu,p} 
\|r_{I,\mu}+r_{II,\mu}\|_{L^p(\Omega\times(0,t))} \,.
\end{aligned}
\end{equation}
For the two terms on the right hand side of \eqref{eqn:z1} we can estimate
\[
\|r_{I,\mu}\|_{L^p(\Omega\times(0,T))}
\leq \check{c}_1|\Omega|^{1/p}\left(\int_0^t \|e^{\mu s} z(s)\|_{C(\Omega)}^p\, ds\right)^{1/p}\, ;
\]
To estimate $r_{II,\mu}$, we first of all consider the multiplier $y$, cf. \eqref{eqn:y}, 
\[
\begin{aligned}
\|y\|_{C(\Omega\times(0,t))}
&\leq \|f-f_{ex}\|_{C^1(J))}+\|{f_{ex}}''\|_{L^\infty(J)} \|\hat{u}\|_{C(\Omega\times(0,t))}\\
&\leq \|f-f_{ex}\|_{C^1(J))}+\|{f_{ex}}''\|_{L^\infty(J)} 
K\|f-f_{ex}\|_{C(J)}\\
&\leq (1+K\|{f_{ex}}''\|_{L^\infty(J)}) \|f-f_{ex}\|_{C^1(J))}\,,
\end{aligned}
\]
hence, using \eqref{eqn:expdecayu2}, we obtain
\[
\|r_{II,\mu}\|_{L^p(\Omega\times(0,t))}
\leq (1+K\|{f_{ex}}''\|_{L^\infty(J)}) \|f-f_{ex}\|_{C^1(J))}
C_2\left(\int_0^t e^{p(\mu-c_2) s} ds\right)^{1/p}\,.
\]
We therefore choose $0<\mu<\min\{\check{\lambda}_1,c_2\}$ so that $\int_0^t e^{p(\mu-c_2) s} ds\leq \frac{1}{p(c_2-\mu)}$ and get
\[
\|r_{II,\mu}\|_{L^p(\Omega\times(0,t))}\leq C_3\|f-f_{ex}\|_{C^1(J))}
\]
with $C_3=C_2(1+K\|{f_{ex}}''\|_{L^\infty(J)})(\frac{1}{p(c_2-\mu)})^{1/p}$.

Altogether, abbreviating 
\begin{equation}\label{eqn:C}
C=2^{p-1}C_{W^{\theta,p},C}^{\mathbb{R^+}} C_{W^{2-2\theta,p},C^1}^\Omega C^A_{\mu,p}\,,
\end{equation}
we end up with the estimate 
\[
\eta(t)\leq C\Bigl(\check{c}_1^p|\Omega| \int_0^t \eta(s)\, ds + C_3^p\|f-f_{ex}\|_{C^1(J))}^p\Bigr)
\]
for $\eta(t)=e^{p\mu t}\| z(t)\|_{C^{1,\beta}(\Omega)}^p$.
Now applying Gronwalls's inequality we obtain
\[
\eta(t)\leq C C_3^p \|f-f_{ex}\|_{C^1(J))}^p\Bigl(1+C\check{c}_1^p|\Omega|\int_0^t e^{C\check{c}_1^p|\Omega|(t-s)}\, ds\Bigr)
=C C_3^p e^{C\check{c}_1^p|\Omega|t} \|f-f_{ex}\|_{C^1(J))}^p\,.
\]
Thus with $\check{c}_1$ sufficiently small so that 
\begin{equation}\label{eqn:checkc1}
C\check{c}_1^p|\Omega|<p\mu <p\min\{\check{\lambda}_1,c_2\},
\end{equation}
we get 
\[ 
\|z(T)\|_{C^{1,\beta}(\Omega)}^p\leq C C_3^p e^{-(p\mu-C\check{c}_1^p|\Omega|)T}\ \|f-f_{ex}\|_{C^1(J))}^p
% corr BK oct 25
\]
hence, via \eqref{eqn:Pg} and \eqref{eqn:contrest1}, Lipschitz continuity of $\mathbb{T}^2$ with a factor that decays exponentially with $T$.

\medskip

Here it is important to note that $K$ in \eqref{eqn:estzr0}, \eqref{eqn:contrest1} can be chosen as independent of $T$ in the dissipative setting we are considering here, due to the following lemma, whose prove can be found in the appendix. 
\begin{lemma}\label{lem:Kbar}
Let $\mathbb{L}$ be of the form \eqref{eqn:LA} and assume that $c\in C^2(\Theta)$ with 
\begin{equation}\label{eqn:c1check_lem}
\check{q}-c\leq \check{c}_1\mbox{ in }\Theta
\end{equation} 
for some $\check{q}\in C^2(\Omega)$ (depending on $x$ only), $\check{q}\geq0$, 
\begin{equation}\label{eqn:smallnessc1check}
\check{c}_1<\frac{(ep\mu)^{1/p}}{2|\Omega|^{1/p}C_{W^{\theta,p},C}^{\mathbb{R^+}} C_{W^{2-2\theta,p},C^1}^\Omega C^A_{\mu,p}}, 
\end{equation} 
for some $\mu<\check{\lambda}_1$, $p\in[1,\infty)$, where $\check{\lambda}_1>0$ is the smallest eigenvalue of $-\mathbb{L}+\check{q}$.
\\
Then there exists a constant $\bar{K}$ independent of $T$ such that for any $j\in C^{0,\beta}(\Theta)$ the solution $z$ of 
\[
z_t-\mathbb{L} z +c z = j \quad \mbox{ in }\Omega\times(0,T) 
\]
with homogeneous initial and conditions satisfies
\[
%\|z_t\|_{C^{0,\beta}(\Theta)}+
\sum_{|m|\leq2}\|D^m_x z\|_{C^{0,\beta}(\Theta)}
\leq\bar{K}\|j\|_{C^{0,\beta}(\Theta)}\,.
\]
\end{lemma}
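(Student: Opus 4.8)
The plan is to combine two ingredients: a \emph{global‑in‑time} dissipativity estimate that controls $z$ in $C^{0,\beta}(\Theta)$ uniformly in $T$, and a \emph{local‑in‑time} parabolic Schauder estimate whose constant depends only on a fixed time‑window length and on the (uniform) coefficient norms. Feeding the first into the second and patching over a covering of $(0,T)$ by windows of fixed length then yields the claimed $T$‑independent bound.

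First I would establish the dissipativity bound by the exponentially weighted maximal regularity argument already used in \eqref{eqn:CA}--\eqref{eqn:expmuz}. Setting $z_\mu(x,t)=e^{\mu t}z(x,t)$ and $j_\mu(x,t)=e^{\mu t}j(x,t)$, the function $z_\mu$ solves $\partial_t z_\mu+(-\mathbb{L}+\check q-\mu)z_\mu=(\check q-c)z_\mu+j_\mu$ with homogeneous initial and boundary data, where $-\mathbb{L}+\check q-\mu$ is still positive since $\mu<\check\lambda_1$. Maximal $L^p$ regularity of this operator with a constant $C^A_{\mu,p}$ independent of $t$ (cf.\ \cite[Proposition 8]{Latushkin2006}), the bound $\|(\check q-c)z_\mu\|_{L^p(\Theta_t)}\le\check c_1\|z_\mu\|_{L^p(\Theta_t)}$ coming from \eqref{eqn:c1check_lem}, and absorption of that term into the left‑hand side using the quantitative smallness \eqref{eqn:smallnessc1check}, give, by causality, for every $t\le T$,
\[
\|z_\mu\|_{W^{1,p}(0,t;L^p(\Omega))}+\|z_\mu\|_{L^p(0,t;W^{2,p}(\Omega))}\le 2C^A_{\mu,p}\,\|j_\mu\|_{L^p(\Theta_t)}\le \tfrac{2C^A_{\mu,p}|\Omega|^{1/p}}{(p\mu)^{1/p}}\,e^{\mu t}\,\|j\|_{C(\Theta)}\,.
\]
Choosing $p$ large enough and interpolating as in \eqref{eqn:expmuz}, the mixed‑norm space $W^{1,p}(0,t;L^p(\Omega))\cap L^p(0,t;W^{2,p}(\Omega))$ embeds into the parabolic Hölder space $C^{0,\beta}(\overline{\Theta_t})$ with embedding constant independent of $t$ (Morrey's inequality for the time direction, as noted in the text); dividing by $e^{\mu t}$ yields $\|z\|_{C^{0,\beta}(\Theta)}\le M\|j\|_{C(\Theta)}$ with $M$ independent of $T$.

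Next, cover $(0,T)$ by the overlapping windows $I_n=(n,n+2)\cap(0,T)$, $n\in\{0,1,2,\dots\}$. On each $\Omega\times I_n$ with $n\ge1$, multiplying $z$ by a smooth temporal cutoff vanishing near $t=n$ and equal to $1$ on $[n+\tfrac12,n+\tfrac32]$ reduces matters to a problem with vanishing initial data, so \cite[Theorem 6, page 65]{Friedman1964} gives
\[
\|z\|_{C^{2,\beta}(\Omega\times[n+\frac12,n+\frac32])}\le K_0\bigl(\|j\|_{C^{0,\beta}(\Omega\times\overline{I_n})}+\|z\|_{C^{0,\beta}(\Omega\times\overline{I_n})}\bigr),
\]
with $K_0$ depending only on the window length, on $\mathbb{L}$ of the form \eqref{eqn:LA}, on the impedance boundary data, and on $\|c\|_{C^{0,\beta}(\Theta)}<\infty$, hence independent of $n$ and of $T$; for $n=0$ one applies the zero‑initial‑data estimate directly. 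Since $\sup_\Theta|D^m_x z|\le\sup_n\|z\|_{C^{2,\beta}(\Omega\times[n+\frac12,n+\frac32])}$, and since two points of parabolic distance $\ge1$ contribute at most $2\sup_\Theta|D^m_x z|$ to a Hölder quotient while closer points lie in a common window, the global norm $\sum_{|m|\le2}\|D^m_x z\|_{C^{0,\beta}(\Theta)}$ is bounded by a fixed multiple of $\sup_n$ of the window norms; combining with the dissipativity bound of the previous step gives $\sum_{|m|\le2}\|D^m_x z\|_{C^{0,\beta}(\Theta)}\le\bar K\,\|j\|_{C^{0,\beta}(\Theta)}$ with $\bar K$ independent of $T$.

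I expect the principal obstacle to be precisely the tension that forces this two‑step structure: a genuinely $T$‑independent Schauder constant is not available from the global parabolic Schauder estimate alone (already for $\partial_t-\mathbb{L}$ without a coercive zeroth‑order term its constant grows with $T$), so one must localize in time — but localization requires an a priori $C^{0,\beta}(\Theta)$ control of $z$ itself, which is available only because of dissipativity, and this is exactly where the hypotheses $\check q\ge0$, $\mu<\check\lambda_1$ and the quantitative smallness \eqref{eqn:smallnessc1check} of $\check c_1$ are indispensable. A secondary technical point is that $c$ depends on $t$, which rules out a plain resolvent/semigroup decay argument; this is handled by treating $(\check q-c)z_\mu$ as a perturbation absorbed via maximal regularity, which is what ties the admissible size of $\check c_1$ to $C^A_{\mu,p}$ in \eqref{eqn:smallnessc1check}. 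Care is also needed in reassembling the local parabolic Hölder estimates into a global one, since the parabolic Hölder metric couples the space and time variables.
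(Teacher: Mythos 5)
Your overall two-step architecture (a $T$-uniform a priori bound on $z$ obtained from dissipativity, fed into a Schauder estimate) is the right one, and it differs from the paper's proof mainly in the second step: the paper does not localize in time at all, but instead quotes Friedman's global Schauder estimate $\sum_{|m|\le 2}\|D^m_x z\|_{C^{0,\beta}(Q)}\le K_0(\|z\|_{C(Q)}+\|j\|_{C^{0,\beta}(Q)})$, whose constant $K_0$ depends only on the coefficient bounds, the ellipticity constant and $d$ — it is $T$-independent precisely because the lower-order term $\|z\|_{C(Q)}$ already sits on the right-hand side. So your window-and-patch construction is legitimate but addresses a difficulty that the quoted theorem already resolves; it also obliges you to control $\|z\|_{C^{0,\beta}(\Theta)}$ rather than merely $\|z\|_{C(\Theta)}$, which is all the paper needs.

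The genuine gap is in your first step. To absorb the perturbation you use $\|(\check{q}-c)z_\mu\|_{L^p(\Theta_t)}\le\check{c}_1\|z_\mu\|_{L^p(\Theta_t)}$, which requires the \emph{two-sided} pointwise bound $|\check{q}-c|\le\check{c}_1$. The lemma only assumes the one-sided inequality \eqref{eqn:c1check_lem}, $\check{q}-c\le\check{c}_1$; under that hypothesis $c$ may be arbitrarily large and positive (very strong damping), so $\check{q}-c$ can be arbitrarily negative and $\|(\check{q}-c)z_\mu\|_{L^p}$ is not controlled by $\check{c}_1\|z_\mu\|_{L^p}$. Morally the estimate should only improve in that regime, but your perturbation argument does not see this. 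The paper's device for exactly this point is to pass to $v=\tfrac12 z^2$: then the troublesome term enters as $(\check{q}-c)z^2\le\check{c}_1 z^2$ (plus $-\nabla z^T\underline{A}\nabla z\le0$ and a Young-inequality term in $j$), for which the one-sided bound and the sign of $z^2$ suffice; the maximum principle compares $v$ with a majorant $\bar{v}$, and the same exponentially weighted maximal-regularity/embedding machinery applied to $\bar{v}$ yields $\|z\|_{C(Q)}^2\lesssim\|j\|_{C(Q)}^2$ uniformly in $T$, after which Friedman's estimate finishes the proof. If you replace your direct $L^p$ absorption on $z_\mu$ by this squared-variable maximum-principle argument (or strengthen the hypothesis to a two-sided bound, which is what the application \eqref{eqn:c1check} actually provides), your proof closes.
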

In particular, the assumptions of this Lemma are satisfied if $c=-f_{ex}'(u_{ex})\geq0$.

\medskip

Thus we have proven the following contractivity result.
\begin{theorem} 
Let $u_0=0$, $f_{ex}(0)=0$, $\mathbb{L}$ be of the form \eqref{eqn:LA}, ${f_{ex}}'(u_{ex})\in C^2(\Theta)$, ${f_{ex}}\in X$,
and assume that $r_t$ decays exponentially  $\|r_t(t)\|_{C(\Omega)}\leq C_r e^{-c_rt}$ and that
\eqref{eqn:c1check} holds with a nonnegative potential $\check{q}\in C^2(\Omega)$ and a sufficiently small constant $\check{c}_1$, cf. \eqref{eqn:CA}, \eqref{eqn:C}, \eqref{eqn:checkc1}, \eqref{eqn:smallnessc1check}.

Then there exist $T>0$ large enough and $\rho>0$ small enough, such that the operator $\mathbb{T}^2$ is a self-mapping on $B_\rho^X(f_{ex})=\{j\in C^{2,\beta}(\mathbb{R})\, : \, j(0)=0\,, \ j'=0 \mbox{ on }\mathbb{R}\setminus J \,, \ \|j-f_{ex}\|_{C^{2,\beta}(J)}\leq \rho\}$.

Moreover, the contraction estimates 
\[
\|\mathbb{T}^2f_0-f_{ex}\|_{C^{2,\beta}(J)} \leq q \|f_0-f_{ex}\|_{C^{1,\beta}(J)}\,, \qquad
\|\mathbb{T}^{2n}f_0-f_{ex}\|_{C^{2,\beta}(J)} \leq q^n \|f_0-f_{ex}\|_{C^{1,\beta}(J)}
\]
hold for some $q\in(0,1)$ and any $f_0\in B_\rho^X(f_{ex})$.
\end{theorem}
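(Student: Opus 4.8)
The plan is to collect the estimates established in the discussion preceding the statement into a single contraction bound for $\mathbb{T}^2$ whose Lipschitz factor tends to $0$ as $T\to\infty$. The first observation is that the hypotheses $u_0=0$ and $f_{ex}(0)=0$ (together with the constraint $j(0)=0$ built into the ball $B_\rho^X(f_{ex})$) make the initial datum $(f-f_{ex})(u_0)$ in \eqref{eqn:z} vanish, so the component $z^0$ --- which could not be controlled in $C^{2,\beta}$ because of the non-integrable singularity of $\Psi(\cdot;\sigma,\lambda_1)$ at the origin --- is identically zero, and $z=z^r$. Combined with the iteration identity \eqref{eqn:contrest1}, this means it suffices to estimate the $C^{1,\beta}(\Omega)$ norm (rather than the $C^{2,\beta}(\Omega)$ norm) of $z(T)=u_t(T;f)-u_{ex,t}(T)=(\mathbb{T}f-f_{ex})(g)$, where $z$ solves the shifted problem \eqref{eqn:z1}.

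For this I would proceed exactly as in the computation displayed above. Since $\mathbb{L}$ has the form \eqref{eqn:LA}, $f'\le0$, and $r_t$ decays exponentially, Lemma \oldref{lem:expdecay} (read in the scalar case) gives the decay \eqref{eqn:expdecayu2} of $u_t$. Applying maximal $L^p$ regularity of $A=-\mathbb{L}+\check{q}$ to $z_\mu=e^{\mu t}z$ for $0<\mu<\min\{\check{\lambda}_1,c_2\}$, together with the $t$-uniform embeddings $W^{\theta,p}(0,t)\hookrightarrow C(0,t)$ and $W^{2-2\theta,p}(\Omega)\hookrightarrow C^{1,\beta}(\Omega)$ and interpolation, yields \eqref{eqn:expmuz}. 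Splitting the right-hand side of \eqref{eqn:z1} as $r_I+r_{II}$, the term $r_I=({f_{ex}}'(u_{ex})+\check{q})z$ is controlled through $\check{c}_1$ by \eqref{eqn:c1check}, while $r_{II}=y\,u_t$ is bounded using the pointwise estimate on $\|y\|_{C(\Theta)}$ from \eqref{eqn:y} and the exponential decay \eqref{eqn:expdecayu2}, the choice $\mu<c_2$ keeping $\int_0^t e^{p(\mu-c_2)s}\,ds$ bounded uniformly in $t$. This produces the Gronwall-type inequality for $\eta(t)=e^{p\mu t}\|z(t)\|_{C^{1,\beta}(\Omega)}^p$ written above, whose solution gives $\|z(T)\|_{C^{1,\beta}(\Omega)}\le C_\ast\,e^{-\delta T}\|f-f_{ex}\|_{C^1(J)}$ with $\delta=\mu-\tfrac{1}{p}C\check{c}_1^p|\Omega|>0$ by \eqref{eqn:checkc1}.

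It then remains to assemble. Feeding $\|z(T)\|_{C^{1,\beta}(\Omega)}$ back through \eqref{eqn:Pg} gives $\|\mathbb{T}f-f_{ex}\|_{C^{1,\beta}(J)}\le C(g)C_\ast e^{-\delta T}\|f-f_{ex}\|_{C^1(J)}$, and applying the $C^{2,\beta}$ estimate \eqref{eqn:estzr}/\eqref{eqn:contrest1} once more --- now with $\mathbb{T}f$ in place of $f$ and $z^0=0$ --- produces $\|\mathbb{T}^2f-f_{ex}\|_{C^{2,\beta}(J)}\le q\,\|f-f_{ex}\|_{C^{1,\beta}(J)}$ with $q=q(T)$ a fixed constant times $e^{-\delta T}$. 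Choosing $T$ large enough that $q<1$, and then $\rho$ small enough that the running smallness conditions $\|f-f_{ex}\|_{C^{0,\beta_0}(J)}\le\rho_0$, $\|(f-f_{ex})'\|_{C^{0,\beta}(J)}\le\rho_1$ underlying \eqref{eqn:estzr} hold on $B_\rho^X(f_{ex})$, gives the self-mapping property --- membership of $\mathbb{T}^2f$ in $X$, i.e. $\mathbb{T}^2f(0)=0$ and $(\mathbb{T}^2f)'\equiv0$ off $J$, being built into the definition of $\mathbb{T}$ via $\mathbb{P}_g$ --- and the iterated bound $\|\mathbb{T}^{2n}f_0-f_{ex}\|_{C^{2,\beta}(J)}\le q^n\|f_0-f_{ex}\|_{C^{1,\beta}(J)}$ follows by induction on $n$, using $\|\cdot\|_{C^{1,\beta}(J)}\le\|\cdot\|_{C^{2,\beta}(J)}$ and invariance of $B_\rho^X(f_{ex})$.

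The main obstacle, and the reason the proof must be staged through $\mathbb{T}^2$ and the shifted equation, is ensuring that every constant --- in particular the Schauder constant $K$ in \eqref{eqn:estzr0}--\eqref{eqn:contrest1}, which a priori deteriorates as the length of the space-time cylinder grows --- is independent of $T$; this is exactly what Lemma \oldref{lem:Kbar} supplies, via the same maximal-regularity-plus-Gronwall mechanism under the dissipativity hypothesis $c=-{f_{ex}}'(u_{ex})\ge0$ (or, more generally, \eqref{eqn:c1check_lem}). A secondary delicate point is threading the compatible smallness requirements on $\check{c}_1$: it must satisfy \eqref{eqn:c1check}, \eqref{eqn:checkc1} and \eqref{eqn:smallnessc1check} simultaneously, while $\mu$ has to beat both the spectral gap $\check{\lambda}_1$ and the decay rate $c_2$, so that the net rate $\delta=\mu-\tfrac{1}{p}C\check{c}_1^p|\Omega|$ stays strictly positive. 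Everything else is a bookkeeping assembly of the inequalities already displayed.
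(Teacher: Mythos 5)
Your proposal is correct and follows essentially the same route as the paper: the theorem is proved exactly by assembling the preceding estimates --- killing $z^0$ via $u_0=0$, $f(0)=f_{ex}(0)$, reducing to a $C^{1,\beta}$ bound on $z(T)$ through the iterated estimate \eqref{eqn:contrest1}, obtaining exponential decay of that bound via maximal $L^p$ regularity, the splitting $r_I+r_{II}$, and Gronwall under \eqref{eqn:checkc1}, with Lemma \oldref{lem:Kbar} supplying the $T$-independence of the Schauder constant. The only cosmetic slip is the phrase that $\mu$ must ``beat'' $\check{\lambda}_1$ and $c_2$, when in fact $\mu$ must lie strictly below both, as you correctly state earlier.
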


Conclusions on uniqueness analogous to Corollary \oldref{cor:uniquenessSchauder} can be drawn.

\section{Reconstructions}\label{sec:reconstructions}

We will show the results of numerical experiments using the basic versions
of the iterative schemes defined by \eqref{eqn:opT_fiti} and \eqref{eqn:opT_titr}
for each of the two data types:
time trace data consisting of the value of $h(t) :=u(x_0,t)$ for $t\in[0,T]$;
final time data $g(x) :=u(x,T)$ for some chosen value of $T$.
The numerical results presented will be set in one space dimension
although there is no limitation in this regard
(other than computational complexity of the direct solvers) as our
unknowns are functions of a single variable.
Also, in this setting the graphical illustrations are more transparent.
Note that in one dimension the curve $\omega\subset\Omega$ becomes 
$\Omega$ itself which we take to be the unit interval.
We will also consider  only two equations as this case encompasses
most of the features of a larger system.
For notational convenience we use $u$ and $v$ for the dependent variables
in the two equations in the system.
As data we took two differing initial values $u_0(x)$ and $v_0(x)$
and as boundary conditions we used (homogeneous)
Dirichlet at the left endpoint and Neumann at the right; typically
different for each of $u$ and $v$.

\Margin{R1 C (1)}
\Margin{R1 C (4)}
\revision{
We outline below the main steps used to compile the reconstruction examples
shown throughout this section.
\begin{enumerate}
\item{} The domain and solvers used.
\begin{enumerate}
\item{} 
The domain was the rectangle $[0,L] \times [0,T]$ where we took $L=T=1$.
For a direct solver to the reaction diffusion system we used a
finite difference scheme based on the Crank-Nicolson integrator
and used this in an extrapolation mode resulting in fourth order
accuracy in space and time.
\end{enumerate}
\item{} Data assimulation and smoothing
\begin{enumerate}
\item{}
To obtain simulated data we used the direct solver at relatively low
resolution to obtain either/or a time trace $u(x_0,t)$ or final time
$u(x,T)$ values.
These values were then sampled at $S$ equally spaced points and
uniformly distributed, mean zero, random noise added to form our
simulated measurements.
 The values chosen were $S=20$ in the case of spatial data
and $S=25$ for temporal data.
\item{}
These data values were then interpolated to the entire intervals
containing $200$ in space and $300$ in time 
using an $H^2$ or $H^1$-filtering scheme
thus obtaining the working values of
$g_{\mbox{\footnotesize{meas}}}(x)$ and $h_{\mbox{\footnotesize{meas}}}(t)$.
to be used in the iterative schemes.
\item{}
For the spatial case of $g_{\mbox{\footnotesize{meas}}}(x)$
this filtering used an eigenfunction basis of the elliptic operator
to take into account the boundary conditions and projecting
onto a basis set of its eigenfunctions using $H^2$ smoothing of its
coefficients to obtain $g(x)$.
In the temporal case of $h_{\mbox{\footnotesize{meas}}}(t)$
where the only constraint is with the initial data function at $t=0$,
either a $H^1$ Tikhonov penalty term or a smoothing spline routine was used.
%Note that in this situation typically the largest error
%occurs at the endpoint $t=T$ where no constraint exists and since
%we are forcing the solution this corresponds to the largest value of $u$.
%This will be see in our reconstructions where $f_1$ and $f_2$ will
%show poorer reconstructions for larger $u$ values than for smaller ones.
\item{}
Note that the iteration schemes below themselves contains no specific
regularization (although the projection of each iteration onto the range
of the data could be considered in this light).
For more details on data smoothing and propagation of the noise through the
fixed point iteration, we refer to
\cite[Section 3.5]{KaltenbacherRundell:2019c}.
\end{enumerate}
\item{}
Algorithm for reconstructing $f_1$, $f_2$ in section~\oldref{sec:f1f2}:
\begin{enumerate}
\item{}
Set $f_1^0$, $f_2^0$ to some initial guess then for $k=0,1,2,\ldots$ 
\item{}
compute $D_tu(T)$, $D_tv(T)$ by solving
\eqref{eqn:ex_competing_species_rec} with $f_1=f_1^k$, $f_2=f_2^k$, and differentiating $u$, $v$ with respect to time
\item{} update $f_1$, $f_2$:
\begin{equation*}
\begin{aligned}
f_1^{k+1}(g_u(x))&= D_tu(x,T)-\triangle g_u(x,T) -\beta g_u(x,T) g_v(x,T)-r^u(x,T)\\
f_2^{k+1}(g_v(x))&= D_tv(x,T)-\triangle g_v(x,T) -\beta g_u(x,T) g_v(x,T)-r^v(x,T)
\end{aligned}
\end{equation*}
\end{enumerate}
The iteration for reconstructing $f_1,f_2$ from time trace data is defined analogously and involves computation of $\triangle u$, $\triangle v$ instead of $D_tu$, $D_tv$.
\item{}
Algorithm for reconstructing $\phi_1$, $\phi_2$ in section~\oldref{sec:phi1phi2}:
\begin{enumerate}
\item{}
Set $\phi_1^0$, $\phi_2^0$ to some initial guess, then for $k=0,1,2,\ldots$ 
\item{} compute $D_tu(T)$, $D_tv(T)$ by solving \eqref{phi1phi2} with $\phi_1=\phi_1^k$, $\phi_2=\phi_2^k$, and differentiating $u$, $v$ with respect to time
\item{} update $\phi_1$, $\phi_2$:
\begin{equation*}
\begin{aligned}
\phi_1^{k+1}(w(g_u(x),g_v(x)))&= \frac{1}{\beta_u}\left(D_tu(x,T)-\triangle g_u(x,T) - f_1(g_u(x,T))-r^u(x,T)\right)\nonumber \\
\phi_2^{k+1}(w(g_u(x),g_v(x)))&= \frac{1}{\beta_v}\left(D_tv(x,T)-\triangle g_v(x,T) - f_1(g_v(x,T))-r^v(x,T)\right)\nonumber
\end{aligned}
\end{equation*}
\end{enumerate}
\end{enumerate}
}

\subsection{Reconstructions of $f_1$ and $f_2$}\label{sec:f1f2}

In this first group of reconstructions  we seek the recovery of 
the reaction terms $f_1(u)$ and $f_2(v)$ and assume the interaction
terms between them are just given by a multiple of $\phi_i(w) = w =uv$.
Our equations are then
\begin{equation}\label{eqn:ex_competing_species_rec}
\begin{aligned}
D_t u - \triangle u &= f_1(u)+\beta u\cdot v +r_u(x,t,u)\\
D_t v - \triangle v &= f_2(v)+\beta u\cdot v +r_v(x,t,v)
\end{aligned}
\end{equation}
representing a ``competing species'' model if $\beta<0$ and a
``symbiotic relationship'' if $\beta>0$.
The magniture of $\beta$ represents the strength of the coupling.
The source terms $r_u(x,t,u)$ and $r_v(x,t,v)$
are assumed known if present.
We used a homogeneous Dirichlet boundary conditions at $x=0$
and Neumann conditions forcing in flux at $x=1$.
The initial conditions $u_0$ and $v_0$ were different as were
$r_u$ and $r_v$,
\[
\begin{aligned}
&u_0(x) = x(1 -2x + x^2), \qquad 
&&v_0(x)=\sin(\tfrac{\pi}{2}x)\,,
\\
&r^u(x,t)=10\sin(\tfrac{\pi}{2} x)t, \qquad 
&&r^v(x,t)=12(2x - x^2)t \,.
\end{aligned}
\]

The following sample functions to be reconstructed were used:
\begin{equation}\label{eqn:sample_f}
f_1(u) = 2u(1-u)(u-0.9),\qquad
f_2(u) =  \max\{2e^{-5(v-1)^2}-0.1v^2,-2\}
\end{equation}
The first of these is a version of the Zeldovich combustion model
with chosen parameters that are physically relevant,
the second is chosen to offer more challenge  to the reconstruction process.
Depending on the driving boundary conditions
and the strengths of the interaction terms $\phi_i$ the range over which
one must recover $f_i$ can be considerable.
This offers challenges from a computational viewpoint and of course if
one ``knew'' that the correct answer was a polynomial function
all the reconstruction process would be nothing other than a least squares
fit in some appropriate norm to obtain a small number of constants.
We are looking beyond this here and hoping to be able to detect
features in these reaction and coupling terms that might drive the
model rather than be purely derivative from it. 

The iteration schemes \eqref{eqn:opT_fiti} and \eqref{eqn:opT_titr} can be implemented pointwise
or by representing the unknowns in a set of basis functions.
The choice of the latter is important.
While many standard models use only low degree polynomials to represent
the modelling of $f$ this is clearly a severe limitation.
Using high degree polynomials
is out of the question due to the severe ill-conditioning recovering
Taylor coefficients from data far from the initial point.
Rational functions may seem to be a good choice but again
their range of accuracy is limited when used over a wide interval.
In addition, this is a nonlinear fitting problem - it is also unstable
under extrapolation as it is again analytic continuation.
There are also other negative effects.
Our initial guesses for $f_i$ may be quite distant from the actual
and in this situation during the iteration process it was frequently
found that the denominator of the rational function has zeros sufficiently
near to the real axis that reconstructed functions $f_k$ of
large amplitude resulted at a local point.
This had the effect of causing failure of the iteration scheme to converge.
Indeed basis functions designed for a fixed and relatively narrow range
tend to be suboptimal in this setting.

We found a good choice to be moving Gaussian basis functions.
These are known to effectively model many non-linear relationships
and since each basis function is non-zero over a small interval this
localization is useful in the current situation where $f_1(u)$ and $f_2(v)$ are
totally locally defined.
As a secondary consideration here
this locality property results in a sparse matrix 
that leads to much faster computation of this phase.

As noted earlier, we are initially taking
the interaction functions $\{\phi_1(w),\phi_2(w)\}$
to be a constant $\beta$ times the identity and also $w=uv$.
If $\beta=0$ then this case is just a complete decoupling of the system
and the results of \cite{KaltenbacherRundell:2019c} show a unique recovery
through the resulting contraction mappings.
For $\beta$ sufficiently small, the analysis of section~\oldref{sec:convergence}
then shows the same 
result and the question becomes if this holds true for all $\beta$.
Our analysis does not cover this case and as we will see below this answer
certainly appears to be negative and is illustrated graphically in
Figure~\oldref{fig:cgt_rates_beta} below.

\Margin{R1 C (2)}
\revision{
The norms shown in these figures are discrete $L^2$ norms of
the functions $f$ at the $100$ stored values as described previously.
}
We also show reconstructions of $f_1$ and $f_2$ achieved after
a given number of iterations in
Figures~\oldref{fig:titr_reconstructions_f_with_beta} and
\oldref{fig:fiti_reconstructions_f_with_beta}.
%
% values below are important - the x length and y height of each picture
\newdimen\xfiglen
\newdimen\yfiglen
\xfiglen=2.3 true in  
\yfiglen=1.4 true in
%
%%%%%%%%%%%%%%%%%%%%%%%%%%%%%%%%%%%%%%%%%%%%%%%%%%%%%%%%%%%%%%%%%%%%%%%%%%%%
%
% This is the file fonts.tex
%
%%%%%%%%%%%%%%%%%%%%%%%%%%%%%%%%%%%%%%%%%%%%%%%%%%%%%%%%%%%%%%%%%%%%%%%%%%%%
%
\font\tenrm=cmr10
\font\teni=cmmi10 \skewchar\teni='177
\font\tensy=cmsy10 \skewchar\tensy='60
\font\tenex=cmex10
\font\tenit=cmti10
\font\tensl=cmsl10
\font\tenbf=cmbx10
\font\tentt=cmtt10
\font\ninerm=cmr9
\font\ninei=cmmi9 \skewchar\ninei='177
\font\ninesy=cmsy9 \skewchar\ninesy='60
\font\nineit=cmti9
\font\ninesl=cmsl9
\font\ninebf=cmbx9
\font\ninett=cmtt9
\font\eightrm=cmr8
\font\eighti=cmmi8 \skewchar\eighti='177
\font\eightsy=cmsy8 \skewchar\eightsy='60
\font\eightit=cmti8
\font\eightsl=cmsl8
\font\eightbf=cmbx8
\font\eighttt=cmtt8
\font\sevenrm=cmr7
\font\seveni=cmmi7 \skewchar\seveni='177
\font\sevensy=cmsy7 \skewchar\sevensy='60
\font\sevenbf=cmbx7
\font\sevenit=cmmi7
\font\sevensl=cmmi7
\font\seventt=cmr7
\font\sixrm=cmr6
\font\sixi=cmmi6 \skewchar\sixi='177
\font\sixsy=cmsy6 \skewchar\sixsy='60
\font\sixbf=cmbx6
\font\fiverm=cmr5
\font\fivei=cmmi5 \skewchar\fivei='177
\font\fivesy=cmsy5 \skewchar\fivesy='60
\font\fivebf=cmbx5
\def\tenpoint{\def\rm{\fam0\tenrm}%
        \textfont0=\tenrm \scriptfont0=\sevenrm \scriptscriptfont0=\fiverm
        \textfont1=\teni \scriptfont1=\seveni \scriptscriptfont1=\fivei
        \textfont2=\tensy \scriptfont2=\sevensy \scriptscriptfont2=\fivesy
        \textfont3=\tenex \scriptfont3=\tenex \scriptscriptfont3=\tenex
        \def\it{\fam\itfam\tenit}%
        \textfont\itfam=\tenit
        \def\sl{\fam\slfam\tensl}%
        \textfont\slfam=\tensl
        \def\bf{\fam\bffam\tenbf}%
        \textfont\bffam=\tenbf \scriptfont\bffam=\sevenbf
                \scriptscriptfont\bffam=\fivebf
        \def\tt{\fam\ttfam\tentt}%
        \textfont\ttfam=\tentt
        \normalbaselineskip=12pt%
        \let\sc=\eightrm        % Small caps
        \setbox\strutbox=\hbox{\vrule height8.5pt depth3.5pt width0pt}%
        \normalbaselines\rm}
\def\ninepoint{\def\rm{\fam0\ninerm}%
        \textfont0=\ninerm \scriptfont0=\sixrm \scriptscriptfont0=\fiverm
        \textfont1=\ninei \scriptfont1=\sixi \scriptscriptfont1=\fivei
        \textfont2=\ninesy \scriptfont2=\sixsy \scriptscriptfont2=\fivesy
        \textfont3=\tenex \scriptfont3=\tenex \scriptscriptfont3=\tenex
        \def\it{\fam\itfam\nineit}%
        \textfont\itfam=\nineit
        \def\sl{\fam\slfam\ninesl}%
        \textfont\slfam=\ninesl
        \def\bf{\fam\bffam\ninebf}%
        \textfont\bffam=\ninebf \scriptfont\bffam=\sixbf
                \scriptscriptfont\bffam=\fivebf
        \def\tt{\fam\ttfam\ninett}%
        \textfont\ttfam=\ninett
        \normalbaselineskip=11pt%
        \let\sc=\sevenrm        % Small caps
        \setbox\strutbox=\hbox{\vrule height8pt depth3pt width0pt}%
        \normalbaselines\rm}
\def\eightpoint{\def\rm{\fam0\eightrm}%
        \textfont0=\eightrm \scriptfont0=\sixrm \scriptscriptfont0=\fiverm
        \textfont1=\eighti \scriptfont1=\sixi \scriptscriptfont1=\fivei
        \textfont2=\eightsy \scriptfont2=\sixsy \scriptscriptfont2=\fivesy
        \textfont3=\tenex \scriptfont3=\tenex \scriptscriptfont3=\tenex
        \def\it{\fam\itfam\eightit}%
        \textfont\itfam=\eightit
        \def\sl{\fam\slfam\eightsl}%
        \textfont\slfam=\eightsl
        \def\bf{\fam\bffam\eightbf}%
        \textfont\bffam=\eightbf \scriptfont\bffam=\sixbf
                \scriptscriptfont\bffam=\fivebf
        \def\tt{\fam\ttfam\eighttt}%
        \textfont\ttfam=\eighttt
        \normalbaselineskip=9pt%
        \let\sc=\sixrm  % Small caps
        \setbox\strutbox=\hbox{\vrule height7pt depth2pt width0pt}%
        \normalbaselines\rm}
\def\sevenpoint{\def\rm{\fam0\sevenrm}%
        \textfont0=\sevenrm \scriptfont0=\fiverm \scriptscriptfont0=\fiverm
        \textfont1=\seveni \scriptfont1=\fivei \scriptscriptfont1=\fivei
        \textfont2=\sevensy \scriptfont2=\fivesy \scriptscriptfont2=\fivesy
        \textfont3=\tenex \scriptfont3=\tenex \scriptscriptfont3=\tenex
        \def\it{\fam\itfam\sevenit}%
        \textfont\itfam=\sevenit
        \def\sl{\fam\slfam\sevensl}%
        \textfont\slfam=\sevensl
        \def\bf{\fam\bffam\sevenbf}%
        \textfont\bffam=\sevenbf \scriptfont\bffam=\fivebf
                \scriptscriptfont\bffam=\fivebf
        \def\tt{\fam\ttfam\seventt}%
        \textfont\ttfam=\seventt
        \normalbaselineskip=8pt%
        \let\sc=\fiverm  % Small caps
        \setbox\strutbox=\hbox{\vrule height6pt depth2pt width0pt}%
        \normalbaselines\rm}

\input pictex
\input colordvi
\newdimen\xfigdim
\newdimen\yfigdim
\newbox\figurelegendone
\newbox\figurelegendtwo
\newbox\figurelegendthree
\newbox\figurelegendfour
\newbox\figureone
\newbox\figuretwo
\newbox\figurethree 
\newbox\figurefour
\newbox\figurefive
\newbox\figuresix
\newbox\figureseven
\newbox\figureeight
%
%\definecolor{BBBlue}{rgb}{.63,.79,.95}

%%%%%%%%%%%%%%%%%%%
%
\setbox\figurelegendone=\hbox{
\beginpicture
  \setcoordinatesystem units <0.3\xfiglen,0.11\yfiglen> %point at 0 -0.7
  \setplotarea x from 0 to 1, y from 0 to 4
\footnotesize
\eightrm
  \put {\Orange{$\star$}} [lb] at 0 0.14 \put {$\beta=-1$} [lb] at 0.25 0 
  \put {\Red{$\diamond$}} [lb] at 0 1.14 \put {$\beta=0.3$} [lb] at 0.25 1
  \put {\PineGreen{$\circ$}} [lb] at 0 2.14 \put {$\beta=1$} [lb] at 0.25 2
  \put {\Cyan{$\bullet$}} [lb] at 0 3.14 \put {$\beta=1.3$} [lb] at 0.25 3
\endpicture
}
\setbox\figurelegendtwo=\hbox{
\beginpicture
  \setcoordinatesystem units <0.3\xfiglen,0.11\yfiglen> %point at 0 -0.7
  \setplotarea x from 0 to 1, y from 0 to 4
\footnotesize
  \put {\Orange{$\star$}} [lb] at 0 0.14
  \put {$\beta\!=\!-1\quad T\!=\!1$} [lb] at 0.25 0
%  \put {$T\!=\!1$} [lb] at 0.85 0
  \put {\Red{$\diamond$}} [lb] at 0 1.14
  \put {$\beta\!=\!0.3\quad T\!=\!1$} [lb] at 0.25 1
%\put {$T\!=\!1$} [lb] at 0.85 1
  \put {\PineGreen{$\circ$}} [lb] at 0 2.14
  \put {$\beta\!=\!0.5\quad T\!=\!0.75$} [lb] at 0.25 2 
  %\put {$T\!=\!0.75$} [lb] at 0.85 2
  %\put {\Cyan{$\bullet$}} [lb] at 0 3.14 \put {$\beta=1.3$} [lb] at 0.25 3
\endpicture
}
\setbox\figurelegendthree=\hbox{
\beginpicture
  \setcoordinatesystem units <0.3\xfiglen,0.11\yfiglen> %point at 0 -0.7
  \setplotarea x from 0 to 1, y from -1.1 to 4
\footnotesize
\eightrm
\setdashes <3pt> \putrule from 0.12 -1 to 0.36 -1
  \put {\Black{$\bullet$}} [lb] at 0 -1
  \put {$w=u^2v$} [lb] at 0.45 -1.1
  \put {\Orange{$\star$}} [lb] at 0 0.14 \put {$\beta=-1$} [lb] at 0.25 0 
  \put {\Red{$\diamond$}} [lb] at 0 1.14 \put {$\beta=0.1$} [lb] at 0.25 1
  \put {\PineGreen{$\circ$}} [lb] at 0 2.14 \put {$\beta=1$} [lb] at 0.25 2
  \put {\Cyan{$\bullet$}} [lb] at 0 3.14 \put {$\beta=10$} [lb] at 0.25 3
\endpicture
}
\xfigdim=0.06\xfiglen
\yfigdim=\yfiglen

\setbox\figureone=\vbox{\hsize=\xfiglen  % f_1   Time Trace
\beginpicture
\footnotesize
  \setcoordinatesystem units <\xfigdim,\yfigdim>  %point at 0 -60
  \setplotarea x from 1 to 15, y from 0 to 1
  \axis bottom shiftedto y=0 ticks numbered from 5 to 15 by 5 unlabeled short quantity 15 /
  \axis left ticks short numbered from 0 to 1 by 0.2 /
 \put {$\|f_1^{(n)}-f_1\|_2$} [lb] at 1.1 1
 \put {$n$} [lb] at 15 0.03
\put {\copy\figurelegendone} [rt] at 15 1
%
%\Orange{\relax % beta = -1
\multiput {\Orange{$\star$}} at  
1	 0.5725
2	 0.2382
3	 0.0945
4	 0.0418
5	 0.0238
6	 0.0179
7	 0.0161
8	 0.0155
9	 0.0153
10	 0.0077 /
% /}\relax
%
\Red{\relax % beta = 0.3
\multiput {$\diamond$} at  
1	0.9121
2	 0.4110
3	 0.1868
4	 0.0821
5	 0.0358
6	 0.0158
7	 0.0081
8	 0.0048
9	 0.0037
10	 0.0035
 /}\relax
\PineGreen{\relax % beta = 1
\multiput {$\circ$} at  
1	 0.5810
2	 0.3596
3	 0.2020
4	 0.1051
5	 0.0509
6	 0.0227
7	 0.0093
8	 0.0036
9	 0.0016
10	 0.0013
 /}\relax
\Cyan{\relax % beta = 1.3
\multiput {$\bullet$} at  
1	0.8539
2	0.9020
3	0.8719
4	0.7553
5	0.5815
6	0.3997
7	0.2472
8	0.1381
9	0.0695
10	0.0310
11	0.0122
12	0.0043
13	0.0016
14	0.0012
15	0.0012
 /}\relax
\endpicture
}   % end of time trace data for f1 $\beta=1$
%
%\xfiglen=0.6true in
\yfigdim=0.143\yfiglen
\setbox\figuretwo=\vbox{\hsize=\xfiglen   % f_2  Time Trace
\beginpicture
\small
\footnotesize
  \setcoordinatesystem units <\xfigdim,\yfigdim>  %point at 0 -60
  \setplotarea x from 1 to 15, y from 0 to 7
  \axis bottom shiftedto y=0 ticks numbered from 5 to 15 by 5 unlabeled short quantity 15 /
  \axis left ticks short numbered from 0 to 7 by 1 /
 \put {$\|f_2^{(n)}-f_2\|_2$} [lb] at 1.1 7
 \put {$n$} [lb] at 15 0.3
\put {\copy\figurelegendone} [rt] at 15 7
\setdashes <3pt>
\setsolid
\Orange{\relax  % beta = -1
\multiput {$\star$} at
1	  0.2509
2	  0.0636
3	  0.0267
4	  0.0161
5	  0.0131
6	  0.0127
7	  0.0129
8	  0.0130
9	  0.0131
10	  0.0132
 /}\relax
\Red{\relax % beta = 0.3
\multiput {$\diamond$} at  
1	   0.9673
2	   0.7382
3	   0.4716
4	   0.2702
5	   0.1478
6	   0.0814
7	   0.0486
8	   0.0349
9	   0.0302
10	   0.0288
 /}\relax
\PineGreen{\relax 
\multiput {$\circ$} at  
1	 1.4021
2	 1.1325
3	 0.7027
4	 0.3803
5	 0.1886
6	 0.0883
7	 0.0411
8	 0.0222
9	 0.0171
10	 0.0164
 /}\relax
\Cyan{\relax 
\multiput {$\bullet$} at  
1	  5.0254
2	  6.8811
3	  7.0678
4	  6.1377
5	  4.5934
6	  3.0148
7	  1.7712
8	  0.9464
9	  0.4644
10	  0.2110
11	  0.0901
12	  0.0381
13	  0.0206
14	  0.0178
15	  0.0181
 /}\relax
\endpicture
}
\xfigdim=0.06\xfiglen
\yfigdim=\yfiglen
%\yfigdim=0.25\yfiglen
%
\setbox\figurethree=\vbox{\hsize=\xfiglen   % f_1 Final Time
\beginpicture
\small
\footnotesize
  \setcoordinatesystem units <\xfigdim,\yfigdim>  %point at 0 -60
  \setplotarea x from 1 to 15, y from 0 to 1
  \axis bottom shiftedto y=0 ticks numbered from 5 to 15 by 5 unlabeled short quantity 15 /
  \axis left ticks short numbered from 0 to 1 by 0.2 /
 \put {$\|f_1^{(n)}-f_1\|_2$} [lb] at 1.1 1
 \put {$n$} [lb] at 15 0.04
\put {\copy\figurelegendtwo} [rt] at 15 1
\setdashes <3pt>
\setsolid
\Orange{\relax  % beta = -1
\multiput {$\star$} at
1	 0.4829
2	 0.0961
3	 0.0308
4	 0.0208
5	 0.0159
6	 0.0141
7	 0.0135
8	 0.0134
 /}\relax
\Red{\relax % beta = 0.3  T = 1
\multiput {$\diamond$} at  
1	 0.9121
2	 0.4110
3	 0.1868
4	 0.0821
5	 0.0358
6	 0.0158
7	 0.0081
8	 0.0048
9	 0.0037
10	 0.0035
 /}\relax
\PineGreen{\relax   % beta = 0.5  T = 0.75
\multiput {$\circ$} at  
1	 0.7128
2	 0.6724
3	 0.4311
4	 0.2853
5	 0.1988
6	 0.1327
7	 0.0847
8	 0.0536
9	 0.0339
10	 0.0214
11	 0.0140
12	 0.0106
13	 0.0098
14	 0.0099
15	 0.0103
 /}\relax
%
%\Cyan{\relax 
%\multiput {$\bullet$} at  
% /}\relax
%
\endpicture
}
\setbox\figurefour=\vbox{\hsize=\xfiglen   % f_2  Time Trace
\beginpicture
\small
\footnotesize
  \setcoordinatesystem units <\xfigdim,\yfigdim>  %point at 0 -60
  \setplotarea x from 1 to 15, y from 0 to 1
  \axis bottom shiftedto y=0 ticks numbered from 5 to 15 by 5 unlabeled short quantity 15 /
  \axis left ticks short numbered from 0 to 1 by 0.2 /
 \put {$\|f_2^{(n)}-f_2\|_2$} [lb] at 1.1 1
 \put {$n$} [lb] at 15 0.04
\put {\copy\figurelegendtwo} [rt] at 15 1
\setdashes <3pt>
\setsolid
\Orange{\relax  % beta = -1
\multiput {$\star$} at
1	 0.3002
2	 0.0246
3	 0.0146
4	 0.0147
5	 0.0148
6	 0.0148
7	 0.0149
8	 0.0149
 /}\relax
\Red{\relax % beta = 0.3
\multiput {$\diamond$} at  
1	 0.9673
2	 0.7382
3	 0.4716
4	 0.2702
5	 0.1478
6	 0.0814
7	 0.0486
8	 0.0349
9	 0.0302
10	 0.0288
 /}\relax
\PineGreen{\relax 
\multiput {$\circ$} at  
1	 0.5067
2	 0.2879
3	 0.2108
4	 0.1506
5	 0.1042
6	 0.0720
7	 0.0498
8	 0.0346
9	 0.0249
10	 0.0191
11	 0.0160
12	 0.0145
13	 0.0139
14	 0.0137
15	 0.0136
 /}\relax
%
%\Cyan{\relax 
%\multiput {$\bullet$} at  
% /}\relax
%
\endpicture
}
\xfigdim=0.167\xfiglen
\yfigdim=4\yfiglen
\setbox\figurefive=\vbox{\hsize=\xfiglen   % f_2  Time Trace
\beginpicture
\small
\footnotesize
  \setcoordinatesystem units <\xfigdim,\yfigdim>  %point at 0 -60
  \setplotarea x from 1 to 6, y from 0 to 0.25
  \axis bottom shiftedto y=0 ticks numbered from 1 to 6 by 1 /
  \axis left ticks short numbered from 0 to 0.25 by 0.05 /
 \put {$\|\phi_1^{(n)}-\phi_1\|$} [l] at 1.05 0.25
 \put {$n$} [lb] at 6.04 0.002
\put {\copy\figurelegendthree} [rt] at 6 0.25
\setdashes <3pt>
\setsolid
\setlinear
\linethickness=0.3pt
\Orange{\relax  % beta = -1,  -1  fiti  T=1
\multiput {$\star$} at
1	 0.2544
2	 0.0435
3	 0.0093
4	 0.0063
5	 0.0036
6	 0.0033 /
\plot
1	 0.2544
2	 0.0435
3	 0.0093
4	 0.0063
5	 0.0036
6	 0.0033 /
 }\relax
\Red{\relax % beta = 0.1  fiti T=1
\multiput {$\diamond$} at  
1	 0.1900
2	 0.0294
3	 0.0200
4	 0.0203
5	 0.0201
6	 0.0201 /
\plot
1	 0.1900
2	 0.0294
3	 0.0200
4	 0.0203
5	 0.0201
6	 0.0201 /
 }\relax
\PineGreen{\relax  %beta = 1 fiti T=1
\multiput {$\circ$} at  
1	 0.1370
2	 0.0164
3	 0.0133
4	 0.0132
5	 0.0132
6	 0.0132 /
\plot
1	 0.1370
2	 0.0164
3	 0.0133
4	 0.0132
5	 0.0132
6	 0.0132 /
}\relax
\Cyan{\relax 
\multiput {$\bullet$} at  
1	 0.0936
2	 0.0809
3	 0.0808
4	 0.0808
5	 0.0808
6	 0.0808 /
\plot
1	 0.0936
2	 0.0809
3	 0.0808
4	 0.0808
5	 0.0808
6	 0.0808 /
 }\relax
\multiput {$\bullet$} at  
1      0.1092
2      0.0254
3      0.0229
4      0.0228
5      0.0228 
6      0.0228  /
\setdashes
\plot
1      0.1092
2      0.0254
3      0.0229
4      0.0228
5      0.0228 
6      0.0228  /
\endpicture
}
\setbox\figuresix=\vbox{\hsize=\xfiglen   % \phi_2  Time Trace
\beginpicture
\small
\footnotesize
  \setcoordinatesystem units <\xfigdim,\yfigdim>  %point at 0 -60
  \setplotarea x from 1 to 6, y from 0 to 0.25
  \axis bottom shiftedto y=0 ticks numbered from 1 to 6 by 1 /
  \axis left ticks short numbered from 0 to 0.25 by 0.05 /
  \axis bottom shiftedto y=0 ticks numbered from 1 to 6 by 1 /
  \axis left ticks short numbered from 0 to 0.25 by 0.05 /
 \put {$\|\phi_2^{(n)}-\phi_2\|$} [l] at 1.05 0.25
 \put {$n$} [lb] at 6.04 0.002
\put {\copy\figurelegendthree} [rt] at 6 0.25
\setdashes <3pt>
\setsolid
\setlinear
\linethickness=0.3pt
\Orange{\relax  % beta = -1,  -1  fiti  T=1
\multiput {$\star$} at
1	 0.2208
2	 0.0132
3	 0.0113
4	 0.0043
5	 0.0033
6	 0.0032 /
\plot
1	 0.2208
2	 0.0132
3	 0.0113
4	 0.0043
5	 0.0033
6	 0.0032 /
}\relax
\Red{\relax % beta = 0.3    both beta = 0.1  T=1
\multiput {$\diamond$} at  
1	 0.0526
2	 0.0354
3	 0.0247
4	 0.0232
5	 0.0231
6	 0.0230 /
\plot
1	 0.0526
2	 0.0354
3	 0.0247
4	 0.0232
5	 0.0231
6	 0.0230 /
}\relax
\PineGreen{\relax  %beta = 1 fiti T=1
\multiput {$\circ$} at  
1	 0.0248
2	 0.0112
3	 0.0040
4	 0.0038
5	 0.0038
6	 0.0038 /
\plot
1	 0.0248
2	 0.0112
3	 0.0040
4	 0.0038
5	 0.0038
6	 0.0038 /
 }\relax
\Cyan{\relax     %beta = 10  T=1  fiti
\multiput {$\bullet$} at  
1	 0.0392
2	 0.0156
3	 0.0156
4	 0.0156
5	 0.0156
6	 0.0156 /
\plot
1	 0.0392
2	 0.0156
3	 0.0156
4	 0.0156
5	 0.0156
6	 0.0156 /
 }\relax
\multiput {$\bullet$} at  
1      0.0394
2      0.0096
3      0.0049
4      0.0051
5      0.0051 
6      0.0051  /
\setdashes
\plot
1      0.0394
2      0.0096
3      0.0049
4      0.0051
5      0.0051 
6      0.0051  /
\endpicture
}
\yfigdim=3\yfiglen
\setbox\figureseven=\vbox{\hsize=\xfiglen   % phi_1  Time Trace
\beginpicture
\small
\footnotesize
  \setcoordinatesystem units <\xfigdim,\yfigdim>  %point at 0 -70
  \setplotarea x from 1 to 6, y from 0 to 0.33
  \axis bottom shiftedto y=0 ticks numbered from 1 to 6 by 1 /
  \axis left ticks short numbered from 0 to 0.3  by 0.1 /
 \put {$\|\phi_1^{(n)}-\phi_1\|$} [lt] at 1.08 0.34
 \put {$n$} [lb] at 6 0.04
\put {\copy\figurelegendthree} [rt] at 6 0.32
\setdashes <3pt>
\setsolid
\linethickness=0.2pt
\Orange{\relax  % beta = -1,  -1  titr  T=1
\multiput {$\star$} at
1	 0.3214
2	 0.1365
3	 0.0633
4	 0.0296
5	 0.0135
6	 0.0062
%7        0.0036 /
/
\plot
1	 0.3214
2	 0.1365
3	 0.0633
4	 0.0296
5	 0.0135
6	 0.0062 /
 }\relax
\Red{\relax % beta = 0.1  titr T=1
\multiput {$\diamond$} at  
1	 0.2641
2	 0.0697
3	 0.0248
4	 0.0197
5	 0.0203
6	 0.0204 /
%7	 0.0203 /
\plot
1	 0.2641
2	 0.0697
3	 0.0248
4	 0.0197
5	 0.0203
6	 0.0204 /
 }\relax
\PineGreen{\relax  %beta = 1 titr T=1
\multiput {$\circ$} at  
1	 0.2447
2	 0.0569
3	 0.0179
4	 0.0089
5	 0.0067
6	 0.0061 /
%7	 0.0060
\plot
1	 0.2447
2	 0.0569
3	 0.0179
4	 0.0089
5	 0.0067
6	 0.0061 /
 }\relax
\Cyan{\relax 
\multiput {$\bullet$} at  
1	 0.1590
2	 0.0305
3	 0.0218
4	 0.0203
5	 0.0199
6	 0.0198 /
%7	 0.0198 /
\plot
1	 0.1590
2	 0.0305
3	 0.0218
4	 0.0203
5	 0.0199
6	 0.0198 /
 }\relax
\multiput {$\bullet$} at  
1      0.2391
2      0.0533
3      0.0171
4      0.0116
5      0.0110 
6      0.0109  /
\setdashes
\plot
1      0.2391
2      0.0533
3      0.0171
4      0.0116
5      0.0110 
6      0.0109  /
\endpicture
}
\setbox\figureeight=\vbox{\hsize=\xfiglen   % \phi_2  Time Trace
\beginpicture
\small
\footnotesize
  \setcoordinatesystem units <\xfigdim,\yfigdim>  %point at 0 -60
  \setplotarea x from 1 to 6, y from 0 to 0.33
  \axis bottom shiftedto y=0 ticks numbered from 1 to 6 by 1 /
  \axis left ticks short numbered from 0 to 0.3 by 0.1 /
 \put {$\|\phi_2^{(n)}-\phi_2\|$} [lt] at 1.08 0.34
 \put {$n$} [lb] at 6 0.04
\put {\copy\figurelegendthree} [rt] at 6 0.32
\setdashes <3pt>
\setsolid
\Orange{\relax  % beta = -1,  -1  titr  T=1
\multiput {$\star$} at
1	 0.3211
2	 0.1001
3	 0.0346
4	 0.0138
5	 0.0064
6	 0.0039 /
%7	 0.0034
\plot
1	 0.3211
2	 0.1001
3	 0.0346
4	 0.0138
5	 0.0064
6	 0.0039 /
 }\relax
\Red{\relax %    both beta = 0.1  T=1
\multiput {$\diamond$} at  
1	 0.2693
2	 0.0731
3	 0.0296
4	 0.0248
5	 0.0241
6	 0.0238  /
%7	 0.0236
\plot
1	 0.2693
2	 0.0731
3	 0.0296
4	 0.0248
5	 0.0241
6	 0.0238 /
 }\relax
\PineGreen{\relax % beta = 1 titr T=1
\multiput {$\circ$} at  
1	 0.2427
2	 0.0556
3	 0.0138
4	 0.0060
5	 0.0043
6	 0.0036 /
%7	 0.0034 
\plot
1	 0.2427
2	 0.0556
3	 0.0138
4	 0.0060
5	 0.0043
6	 0.0036 /
 }\relax
\Cyan{\relax   %  beta = 10  T=1  titr
\multiput {$\bullet$} at  
1	 0.1220
2	 0.0177
3	 0.0092
4	 0.0067
5	 0.0061
6	 0.0060 /
%7	 0.0060
\plot
1	 0.1220
2	 0.0177
3	 0.0092
4	 0.0067
5	 0.0061
6	 0.0060 /
 }\relax
\multiput {$\bullet$} at  
1      0.2827
2      0.0849
3      0.0269
4      0.0096
5      0.0047 
6      0.0035  /
\setdashes
\plot
1      0.2827
2      0.0849
3      0.0269
4      0.0096
5      0.0047 
6      0.0035  /
\endpicture
}

\begin{figure}[h]
\hbox to\hsize{\hss\copy\figureone\hss\hss\copy\figuretwo\hss}
\medskip
\hbox to\hsize{\hss\copy\figurethree\hss\hss\copy\figurefour\hss}
\smallskip
\caption{\small
Convergence rates of iterations as a function of $\beta$.
Top: time trace, Bottom: final time.
}
\label{fig:cgt_rates_beta}
\end{figure}
%
%  \PiCTeX file for BB5 - time trace data for $f_1$ and $f_2$
\input colordvi

\input pictex
\font\smallsymbol = cmmi8
%\newdimen\xfiglen \newdimen\yfiglen
\newdimen\xfigdim \newdimen\yfigdim
\newdimen\xmindim \newdimen\ymindim
\newdimen\xmaxdim \newdimen\ymaxdim
\newbox\figurelegendone
\newbox\figureone
\newbox\figuretwo
\newbox\figurethree 
\newbox\figurefour
\newbox\figurefive
\newbox\figuresix
\newbox\figureseven
\newbox\figureeight
\newbox\figurenine
\newbox\figureten
%%%%%%%%%%%%%%%%%%%
%\xfiglen=2.5true in
%\yfiglen=1.6true in
%%%%%%%%%%%%%%%%%%%%

\setbox\figurelegendone=\hbox{
\small
%\eightpoint
\beginpicture
  \setcoordinatesystem units <0.3\xfiglen,0.5\yfiglen> %point at 0 -0.7
  \setplotarea x from 0 to 0.8, y from 0 to 0.7
\linethickness=0.7pt
\ninerm
\setsolid
  \Goldenrod{\relax
  \putrule from 0 0.0 to 0.25 0.0 }\relax
  \put {iteration 10} [l] at 0.35 0.0
  \putrule from 0 0.2 to 0.25 0.2  %\relax
  \Orange{\relax
  \putrule from 0 0.2 to 0.25 0.2 }\relax
  \put {iteration 3} [l] at 0.35 0.2
  \LimeGreen{\relax
  \putrule from 0 0.4 to 0.25 0.4 }\relax
  \putrule from 0 0.4 to 0.2 0.4
  \put {iteration 1} [l] at 0.35 0.4
%\setquadratic
  \setdashes <3pt>
  \Black
  \putrule from 0 0.6 to 0.25 0.6
  \put {actual $q$} [l] at 0.35 0.6
\endpicture
\relax
}
\newdimen\xxfiglen \newdimen\yyfiglen
\xxfiglen=0.5true in
\yyfiglen=0.1true in
\setbox\figurelegendtwo=\hbox{
\beginpicture
  \setcoordinatesystem units <\xxfiglen,\yyfiglen> %point at 0 -0.7
  \setplotarea x from 0 to 2, y from 0 to 5
\footnotesize
\linethickness=0.7pt
\eightrm
   \setdashes <3pt>  \putrule from 0 5 to 0.5 5
   \setsolid
   \Goldenrod{\relax\putrule from 0 4 to 0.5 4 \relax}\relax
   \Orange{\relax\putrule from 0 3 to 0.5 3 \relax}\relax
   \Red{\relax\putrule from 0 2 to 0.5 2 \relax}\relax
   \LimeGreen{\relax\putrule from 0 1 to 0.5 1 \relax}\relax
   \PineGreen{\relax\putrule from 0 0 to 0.5 0 \relax}\relax
  \setsolid
  \put {$f_{\hbox{act}}$}  [lb] at 0.8 5
  \put {${\hbox{iter}\,1}$}  [l] at 0.8 4 
  \put {$_{\hbox{iter}\,2}$}  [l] at 0.8 3
  \put {$_{\hbox{iter}\,3}$} [l] at 0.8 2
  \put {$_{\hbox{iter}\,5}$} [l] at 0.8 1
  \put {$_{\hbox{iter}\,10}$} [l] at 0.8 0
\endpicture
}
\xfigdim=0.571\xfiglen
\yfigdim=0.67\yfiglen
\setbox\figureone=\vbox{\hsize=\xfiglen
% This is for: time trace data, beta=-1, T=1, f_1(u)
\beginpicture
\footnotesize
  \setcoordinatesystem units <\xfigdim,\yfigdim>  point at 0 -1.5
  \setplotarea x from 0 to 1.75, y from -1.5 to 0
  \axis bottom shiftedto y=-1.5 ticks short numbered from 0 to 1.5 by 0.5 /
% unlabeled short quantity 8 / /
  \axis left ticks short numbered from -1.5 to 0 by 0.5 /
% unlabeled short quantity 16 / /
 \put {{\sevenrm $f_1(u)$}} [lb] at 0.02 0
 \put {{\sevenrm $u$}} [rt] at 1.75 -1.54
\put {\copy\figurelegendtwo} [l] at 0.3 -1
\setquadratic
\setdashes <3pt>
\Black{
\plot
         0         0
    0.0484   -0.0784
    0.0968   -0.1404
    0.1452   -0.1873
    0.1935   -0.2205
    0.2419   -0.2414
    0.2903   -0.2512
    0.3387   -0.2514
    0.3871   -0.2434
    0.4355   -0.2284
    0.4838   -0.2079
    0.5322   -0.1831
    0.5806   -0.1555
    0.6290   -0.1265
    0.6774   -0.0973
    0.7258   -0.0694
    0.7742   -0.0440
    0.8225   -0.0226
    0.8709   -0.0065
    0.9193    0.0029
    0.9677    0.0042
    1.0161   -0.0038
    1.0645   -0.0226
    1.1129   -0.0535
    1.1612   -0.0978
    1.2096   -0.1570
    1.2580   -0.2324
    1.3064   -0.3253
    1.3548   -0.4372
    1.4032   -0.5693
    1.4515   -0.7230
    1.4999   -0.8997
    1.5483   -1.1008 %
    1.5967   -1.3276
    1.6451   -1.5814
/
}\relax
\setsolid
\Goldenrod{\relax 
\plot
         0    0.0093
    0.0484   -0.0725
    0.0968   -0.1226
    0.1452   -0.1586
    0.1935   -0.1814
    0.2419   -0.1931
    0.2903   -0.1954
    0.3387   -0.1902
    0.3871   -0.1787
    0.4355   -0.1616
    0.4839   -0.1397
    0.5322   -0.1136
    0.5806   -0.0842
    0.6290   -0.0529
    0.6774   -0.0214
    0.7258    0.0088
    0.7742    0.0364
    0.8226    0.0606
    0.8709    0.0805
    0.9193    0.0957
    0.9677    0.1055
    1.0161    0.1092
    1.0645    0.1054
    1.1129    0.0929
    1.1612    0.0711
    1.2096    0.0397
    1.2580   -0.0018
    1.3064   -0.0542
    1.3548   -0.1178
    1.4032   -0.1933
    1.4516   -0.2813
    1.4999   -0.3817
    1.5483   -0.4948
    1.5967   -0.6221
    1.6451   -0.7664 /
 }\relax
\Orange{\relax 
\plot
         0    0.0095
    0.0484   -0.0784
    0.0968   -0.1368
    0.1452   -0.1815
    0.1935   -0.2122
    0.2419   -0.2304
    0.2903   -0.2377
    0.3387   -0.2360
    0.3871   -0.2265
    0.4355   -0.2104
    0.4839   -0.1881
    0.5322   -0.1610
    0.5806   -0.1303
    0.6290   -0.0974
    0.6774   -0.0643
    0.7258   -0.0330
    0.7742   -0.0046
    0.8226    0.0195
    0.8709    0.0385
    0.9193    0.0516
    0.9677    0.0577
    1.0161    0.0558
    1.0645    0.0439
    1.1129    0.0203
    1.1612   -0.0161
    1.2096   -0.0657
    1.2580   -0.1296
    1.3064   -0.2089
    1.3548   -0.3045
    1.4032   -0.4177
    1.4516   -0.5500
    1.4999   -0.7029
    1.5483   -0.8757
    1.5967   -1.0433
    1.6451   -1.1848 /
 }\relax
\Red{\relax 
\plot
         0    0.0095
    0.0484   -0.0789
    0.0968   -0.1386
    0.1452   -0.1853
    0.1935   -0.2183
    0.2419   -0.2387
    0.2903   -0.2483
    0.3387   -0.2488
    0.3871   -0.2414
    0.4355   -0.2272
    0.4839   -0.2071
    0.5322   -0.1822
    0.5806   -0.1537
    0.6290   -0.1231
    0.6774   -0.0926
    0.7258   -0.0635
    0.7742   -0.0375
    0.8226   -0.0158
    0.8709    0.0008
    0.9193    0.0113
    0.9677    0.0145
    1.0161    0.0090
    1.0645   -0.0073
    1.1129   -0.0365
    1.1612   -0.0797
    1.2096   -0.1379
    1.2580   -0.2121
    1.3064   -0.3039
    1.3548   -0.4142
    1.4032   -0.5448
    1.4516   -0.6976
    1.4999   -0.8727
    1.5483   -1.0683
    1.5967   -1.2787
    1.6451   -1.4996
/
 }\relax
\LimeGreen{\relax 
\plot
    0.0484   -0.0789
    0.0968   -0.1386
    0.1452   -0.1854
    0.1935   -0.2185
    0.2419   -0.2390
    0.2903   -0.2488
    0.3387   -0.2494
    0.3871   -0.2422
    0.4355   -0.2283
    0.4839   -0.2086
    0.5322   -0.1841
    0.5806   -0.1561
    0.6290   -0.1261
    0.6774   -0.0960
    0.7258   -0.0674
    0.7742   -0.0418
    0.8226   -0.0205
    0.8709   -0.0042
    0.9193    0.0059
    0.9677    0.0088
    1.0161    0.0031
    1.0645   -0.0135
    1.1129   -0.0430
    1.1612   -0.0868
    1.2096   -0.1456
    1.2580   -0.2206
    1.3064   -0.3133
    1.3548   -0.4249
    1.4032   -0.5567
    1.4516   -0.7107
    1.4999   -0.8870
    %1.5483   -1.0863
    1.5967   -1.3104
    1.6451   -1.5620
/
 }\relax
\PineGreen{\relax 
\plot
         0    0.0095
    0.0484   -0.0789
    0.0968   -0.1386
    0.1452   -0.1854
    0.1935   -0.2185
    0.2419   -0.2390
    0.2903   -0.2488
    0.3387   -0.2494
    0.3871   -0.2423
    0.4355   -0.2284
    0.4839   -0.2087
    0.5322   -0.1842
    0.5806   -0.1563
    0.6290   -0.1263
    0.6774   -0.0963
    0.7258   -0.0678
    0.7742   -0.0423
    0.8226   -0.0209
    0.8709   -0.0048
    0.9193    0.0054
    0.9677    0.0083
    1.0161    0.0025
    1.0645   -0.0141
    1.1129   -0.0436
    1.1612   -0.0874
    1.2096   -0.1462
    1.2580   -0.2213
    1.3064   -0.3141
    1.3548   -0.4257
    1.4032   -0.5576
    1.4516   -0.7116
    1.4999   -0.8879
    1.5483   -1.0881
    1.5967   -1.3154
    1.6451   -1.5738
/
 }\relax
\Blue{\relax 
\plot
         0    0.0095
    0.0484   -0.0789
    0.0968   -0.1386
    0.1452   -0.1854
    0.1935   -0.2185
    0.2419   -0.2391
    0.2903   -0.2488
    0.3387   -0.2494
    0.3871   -0.2423
    0.4355   -0.2284
    0.4839   -0.2087
    0.5322   -0.1842
    0.5806   -0.1563
    0.6290   -0.1264
    0.6774   -0.0963
    0.7258   -0.0678
    0.7742   -0.0423
    0.8226   -0.0210
    0.8709   -0.0048
    0.9193    0.0053
    0.9677    0.0082
    1.0161    0.0025
    1.0645   -0.0141
    1.1129   -0.0436
    1.1612   -0.0874
    1.2096   -0.1463
    1.2580   -0.2213
    1.3064   -0.3142
    1.3548   -0.4258
    1.4032   -0.5577
    1.4516   -0.7115
    1.4999   -0.8880
    1.5483   -1.0885
    1.5967   -1.3166
    1.6451   -1.5761
/
 }\relax
\endpicture
}   % end of time trace data for f1:  $\beta=-1$, $T=1$
\xfigdim=0.40\xfiglen
\yfigdim=0.37\yfiglen
\setbox\figuretwo=\vbox{\hsize=\xfiglen
\beginpicture
\footnotesize
  \setcoordinatesystem units <\xfigdim,\yfigdim>  point at 0 -0.70
  \setplotarea x from 0.0 to 2.5, y from -0.7 to 2
  \axis bottom shiftedto y=-0.7 ticks short numbered from 0 to 2 by 0.5 /
  \axis left ticks short numbered from -0.5 to 2 by 0.5 /
 \put {{\sevenrm $f_2(u)$}} [lb] at 0 2
 \put {{\sevenrm $u$}} [rt] at 2.5 -0.74
\setquadratic
\setlinear
\setdashes <3pt>
%\Black{
%\input ./pics_Nov10/titr_files/f2_files/fex2_beta-1_T1_titr.tex
\plot
        0    0.0135
    0.03      0.02
    0.0736    0.0268
    0.1472    0.0505
    0.2208    0.0912
    0.2944    0.1572
    0.3680    0.2579
    0.4416    0.4011
    0.5152    0.5909
    0.5888    0.8240
    0.6624    1.0872
    0.7360    1.3572
    0.8096    1.6028
    0.8832    1.7901
    0.9568    1.8899
    1.0304    1.8846
    1.1040    1.7729
    1.1776    1.5697
    1.2512    1.3025
    1.3247    1.0049
    1.3983    0.7091
    1.4719    0.4401
    1.5455    0.2127
    1.6191    0.0320
    1.6927   -0.1050
    1.7663   -0.2059
    1.8399   -0.2798
    1.9135   -0.3353
    1.9871   -0.3796
    2.0607   -0.4174
    2.1343   -0.4523
    2.2079   -0.4861
    2.2815   -0.5200
    2.3551   -0.5544
    2.4287   -0.5898
    2.5023   -0.6261
/
%}
%
\setsolid
\Goldenrod{\relax 
\plot
         0    0.0137
    0.0736    0.0246
    0.1472    0.0433
    0.2208    0.0754
    0.2944    0.1269
    0.3680    0.2046
    0.4416    0.3153
    0.5152    0.4633
    0.5888    0.6439
    0.6624    0.8424
    0.7360    1.0387
    0.8096    1.2089
    0.8832    1.3317
    0.9568    1.3902
    1.0304    1.3765
    1.1040    1.2907
    1.1776    1.1417
    1.2511    0.9462
    1.3247    0.7267
    1.3983    0.5075
    1.4719    0.3077
    1.5455    0.1397
    1.6191    0.0069
    1.6927   -0.0980
    1.7663   -0.1828
    1.8399   -0.2508
    1.9135   -0.3054
    1.9871   -0.3506
    2.0607   -0.3895
    2.1343   -0.4241
    2.2079   -0.4564
    2.2815   -0.4877
    2.3551   -0.5191
    2.4287   -0.5517
    2.5023   -0.5866
/
 }\relax
\Orange{\relax 
\plot
         0    0.0137
    0.0736    0.0264
    0.1472    0.0491
    0.2208    0.0877
    0.2944    0.1500
    0.3680    0.2446
    0.4416    0.3805
    0.5152    0.5629
    0.5888    0.7865
    0.6624    1.0336
    0.7360    1.2799
    0.8096    1.4965
    0.8832    1.6580
    0.9568    1.7439
    1.0304    1.7437
    1.1040    1.6559
    1.1776    1.4884
    1.2511    1.2585
    1.3247    0.9915
    1.3983    0.7159
    1.4719    0.4573
    1.5455    0.2352
    1.6191    0.0581
    1.6927   -0.0796
    1.7663   -0.1876
    1.8399   -0.2717
    1.9135   -0.3377
    1.9871   -0.3911
    2.0607   -0.4368
    2.1343   -0.4779
    2.2079   -0.5167
    2.2815   -0.5551
    2.3551   -0.5938
    2.4287   -0.6343
    2.5023   -0.6781
/
 }\relax
\Red{\relax 
\plot
         0    0.0137
    0.0736    0.0266
    0.1472    0.0498
    0.2208    0.0897
    0.2944    0.1545
    0.3680    0.2538
    0.4416    0.3973
    0.5152    0.5914
    0.5888    0.8305
    0.6624    1.0954
    0.7360    1.3596
    0.8096    1.5918
    0.8832    1.7635
    0.9568    1.8536
    1.0304    1.8501
    1.1040    1.7516
    1.1776    1.5673
    1.2511    1.3163
    1.3247    1.0269
    1.3983    0.7313
    1.4719    0.4579
    1.5455    0.2271
    1.6191    0.0470
    1.6927   -0.0899
    1.7663   -0.1947
    1.8399   -0.2739
    1.9135   -0.3340
    1.9871   -0.3819
    2.0607   -0.4227
    2.1343   -0.4602
    2.2079   -0.4962
    2.2815   -0.5324
    2.3551   -0.5691
    2.4287   -0.6075
    2.5023   -0.6486
/
 }\relax
\LimeGreen{\relax 
\plot
         0    0.0137
    0.0736    0.0266
    0.1472    0.0498
    0.2208    0.0898
    0.2944    0.1546
    0.3680    0.2541
    0.4416    0.3980
    0.5152    0.5929
    0.5888    0.8331
    0.6624    1.0995
    0.7360    1.3651
    0.8096    1.5982
    0.8832    1.7704
    0.9568    1.8599
    1.0304    1.8551
    1.1040    1.7545
    1.1776    1.5677
    1.2511    1.3141
    1.3247    1.0226
    1.3983    0.7257
    1.4719    0.4521
    1.5455    0.2222
    1.6191    0.0438
    1.6927   -0.0908
    1.7663   -0.1932
    1.8399   -0.2698
    1.9135   -0.3276
    1.9871   -0.3732
    2.0607   -0.4121
    2.1343   -0.4478
    2.2079   -0.4823
    2.2815   -0.5168
    2.3551   -0.5520
    2.4287   -0.5884
    2.5023   -0.6270
/
 }\relax
\PineGreen{\relax 
%/\input ./pics_Nov10/titr_files/f2_files/f2_beta-1_T1_it8_titr.tex
\plot
         0    0.0137
    0.0736    0.0266
    0.1472    0.0498
    0.2208    0.0898
    0.2944    0.1546
    0.3680    0.2541
    0.4416    0.3980
    0.5152    0.5930
    0.5888    0.8333
    0.6624    1.0998
    0.7360    1.3655
    0.8096    1.5986
    0.8832    1.7708
    0.9568    1.8601
    1.0304    1.8553
    1.1040    1.7545
    1.1776    1.5674
    1.2511    1.3136
    1.3247    1.0218
    1.3983    0.7250
    1.4719    0.4514
    1.5455    0.2218
    1.6191    0.0437
    1.6927   -0.0906
    1.7663   -0.1925
    1.8399   -0.2687
    1.9135   -0.3259
    1.9871   -0.3711
    2.0607   -0.4095
    2.1343   -0.4448
    2.2079   -0.4789
    2.2815   -0.5131
    2.3551   -0.5479
    2.4287   -0.5836
    2.5023   -0.6213
/
 }\relax
\Blue{\relax 
\plot
         0    0.0137
    0.0736    0.0266
    0.1472    0.0498
    0.2208    0.0898
    0.2944    0.1546
    0.3680    0.2541
    0.4416    0.3980
    0.5152    0.5930
    0.5888    0.8333
    0.6624    1.0998
    0.7360    1.3655
    0.8096    1.5987
    0.8832    1.7708
    0.9568    1.8603
    1.0304    1.8553
    1.1040    1.7545
    1.1776    1.5674
    1.2511    1.3135
    1.3247    1.0219
    1.3983    0.7249
    1.4719    0.4514
    1.5455    0.2218
    1.6191    0.0437
    1.6927   -0.0905
    1.7663   -0.1923
    1.8399   -0.2685
    1.9135   -0.3256
    1.9871   -0.3707
    2.0607   -0.4090
    2.1343   -0.4442
    2.2079   -0.4783
    2.2815   -0.5124
    2.3551   -0.5470
    2.4287   -0.5825
    2.5023   -0.6200
/
 }\relax
\endpicture
}   % end of time trace data for f2:  $\beta=-1$, $T=1$
\xfigdim=0.334\xfiglen
\yfigdim=0.052\yfiglen
\setbox\figurethree=\vbox{\hsize=\xfiglen
% This is for: time trace data, beta=1, T=1, f_1(u)
\beginpicture
\footnotesize
  \setcoordinatesystem units <\xfigdim,\yfigdim>  point at 0 -19
  \setplotarea x from 0 to 3, y from -19 to 0
  \axis bottom shiftedto y=-19 ticks short numbered from 0 to 3 by 1 /
  \axis left ticks short numbered from -15 to 0 by 5 /
 \put {{\sevenrm $f_1(u)$}} [lb] at 0.02 0
 \put {{\sevenrm $u$}} [rt] at 3 -17.8
\setquadratic
\setdashes <3pt>
\Black{
\plot
         0         0
    0.0799   -0.1206
    0.1598   -0.1988
    0.2398   -0.2407
    0.3197   -0.2524
    0.3996   -0.2401
    0.4795   -0.2099
    0.5595   -0.1679
    0.6394   -0.1202
    0.7193   -0.0730
    0.7992   -0.0323
    0.8792   -0.0044
    0.9591    0.0046
    1.0390   -0.0113
    1.1189   -0.0583
    1.1989   -0.1425
    1.2788   -0.2701
    1.3587   -0.4471
    1.4386   -0.6798
    1.5186   -0.9742
    1.5985   -1.3364
    1.6784   -1.7726
    1.7583   -2.2890
    1.8383   -2.8915
    1.9182   -3.5865
    1.9981   -4.3799
    2.0780   -5.2779
    2.1579   -6.2867
    2.2379   -7.4124
    2.3178   -8.6610
    2.3977  -10.0388
    2.4776  -11.5518
    2.5576  -13.2062
    2.6375  -15.0080
    2.7174  -16.9636
/
}
\setsolid
\Goldenrod{\relax 
\plot
         0    0.0028
    0.0799   -0.1048
    0.1598   -0.1674
    0.2398   -0.1954
    0.3197   -0.1958
    0.3996   -0.1765
    0.4795   -0.1447
    0.5595   -0.1070
    0.6394   -0.0691
    0.7193   -0.0350
    0.7992   -0.0077
    0.8792    0.0097
    0.9591    0.0129
    1.0390   -0.0025
    1.1189   -0.0407
    1.1989   -0.1060
    1.2788   -0.2017
    1.3587   -0.3314
    1.4386   -0.4980
    1.5185   -0.7039
    1.5985   -0.9515
    1.6784   -1.2418
    1.7583   -1.5757
    1.8382   -1.9534
    1.9182   -2.3726
    1.9981   -2.8309
    2.0780   -3.3212
    2.1579   -3.8325
    2.2379   -4.3469
    2.3178   -4.8357
    2.3977   -5.2518
    2.4776   -5.5180
    2.5576   -5.4883
    2.6375   -4.9193
    2.7174   -3.7804
/
 }\relax
\Orange{\relax 
\plot
         0    0.0030
    0.0799   -0.1161
    0.1598   -0.1932
    0.2398   -0.2333
    0.3197   -0.2415
    0.3996   -0.2249
    0.4795   -0.1911
    0.5595   -0.1477
    0.6394   -0.1019
    0.7193   -0.0589
    0.7992   -0.0229
    0.8792    0.0012
    0.9591    0.0077
    1.0390   -0.0093
    1.1189   -0.0559
    1.1989   -0.1374
    1.2788   -0.2590
    1.3587   -0.4258
    1.4386   -0.6426
    1.5185   -0.9141
    1.5985   -1.2449
    1.6784   -1.6389
    1.7583   -2.1006
    1.8382   -2.6342
    1.9182   -3.2433
    1.9981   -3.9324
    2.0780   -4.7013
    2.1579   -5.5374
    2.2379   -6.3880
    2.3178   -7.1182
    2.3977   -7.5553
    2.4776   -7.8519
    2.5576   -8.1989
    2.6375   -8.3332
    2.7174   -8.1808
/
 }\relax
\Red{\relax 
\plot
         0    0.0030
    0.0799   -0.1173
    0.1598   -0.1978
    0.2398   -0.2418
    0.3197   -0.2537
    0.3996   -0.2398
    0.4795   -0.2072
    0.5595   -0.1633
    0.6394   -0.1152
    0.7193   -0.0690
    0.7992   -0.0294
    0.8792   -0.0020
    0.9591    0.0069
    1.0390   -0.0093
    1.1189   -0.0572
    1.1989   -0.1429
    1.2788   -0.2724
    1.3587   -0.4516
    1.4386   -0.6858
    1.5185   -0.9808
    1.5985   -1.3424
    1.6784   -1.7759
    1.7583   -2.2872
    1.8382   -2.8833
    1.9182   -3.5695
    1.9981   -4.3538
    2.0780   -5.2395
    2.1579   -6.2259
    2.2379   -7.3045
    2.3178   -8.4597
    2.3977   -9.6663
    2.4776  -10.8833
    2.5576  -12.0275
    2.6375  -12.9623
    2.7174  -13.7292
/
 }\relax
\LimeGreen{\relax 
\plot
         0    0.0030
    0.0799   -0.1173
    0.1598   -0.1979
    0.2398   -0.2421
    0.3197   -0.2543
    0.3996   -0.2407
    0.4795   -0.2082
    0.5595   -0.1642
    0.6394   -0.1162
    0.7193   -0.0697
    0.7992   -0.0299
    0.8792   -0.0021
    0.9591    0.0071
    1.0390   -0.0090
    1.1189   -0.0568
    1.1989   -0.1428
    1.2788   -0.2726
    1.3587   -0.4527
    1.4386   -0.6881
    1.5185   -0.9850
    1.5985   -1.3490
    1.6784   -1.7856
    1.7583   -2.3004
    1.8382   -2.9004
    1.9182   -3.5907
    1.9981   -4.3792
    2.0780   -5.2720
    2.1579   -6.2748
    2.2379   -7.3929
    2.3178   -8.6312
    2.3977   -9.9895
    2.4776  -11.4657
    2.5576  -13.0294
    2.6375  -14.6021
    2.7174  -16.1329
/
 }\relax
\PineGreen{\relax 
\plot
         0    0.0030
    0.0799   -0.1173
    0.1598   -0.1979
    0.2398   -0.2421
    0.3197   -0.2543
    0.3996   -0.2407
    0.4795   -0.2083
    0.5595   -0.1644
    0.6394   -0.1162
    0.7193   -0.0698
    0.7992   -0.0299
    0.8792   -0.0021
    0.9591    0.0071
    1.0390   -0.0089
    1.1189   -0.0568
    1.1989   -0.1427
    1.2788   -0.2726
    1.3587   -0.4528
    1.4386   -0.6883
    1.5185   -0.9853
    1.5985   -1.3496
    1.6784   -1.7863
    1.7583   -2.3013
    1.8382   -2.9014
    1.9182   -3.5916
    1.9981   -4.3802
    2.0780   -5.2735
    2.1579   -6.2782
    2.2379   -7.4009
    2.3178   -8.6489
    2.3977  -10.0254
    2.4776  -11.5362
    2.5576  -13.1760
    2.6375  -14.9394
    2.7174  -16.8605
/
 }\relax
\Blue{\relax 
\plot
         0    0.0030
    0.0799   -0.1173
    0.1598   -0.1979
    0.2398   -0.2421
    0.3197   -0.2543
    0.3996   -0.2407
    0.4795   -0.2083
    0.5595   -0.1644
    0.6394   -0.1162
    0.7193   -0.0698
    0.7992   -0.0299
    0.8792   -0.0021
    0.9591    0.0071
    1.0390   -0.0089
    1.1189   -0.0568
    1.1989   -0.1427
    1.2788   -0.2726
    1.3587   -0.4528
    1.4386   -0.6883
    1.5185   -0.9854
    1.5985   -1.3496
    1.6784   -1.7863
    1.7583   -2.3013
    1.8382   -2.9014
    1.9182   -3.5916
    1.9981   -4.3802
    2.0780   -5.2736
    2.1579   -6.2785
    2.2379   -7.4018
    2.3178   -8.6507
    2.3977  -10.0289
    2.4776  -11.5435
    2.5576  -13.1955
    2.6375  -14.9947
    2.7174  -16.9871
/
 }\relax
\endpicture
}   % end of time trace data for f1:  $\beta=0.3$, $T=1$
\xfigdim=0.189\xfiglen
\yfigdim=0.222\yfiglen
\setbox\figurefour=\vbox{\hsize=\xfiglen
\beginpicture
\footnotesize
  \setcoordinatesystem units <\xfigdim,\yfigdim>  point at 0 -2.5
  \setplotarea x from 0.0 to 5.3, y from -2.5 to 2
  \axis bottom shiftedto y=-2.5 ticks short numbered from 1 to 5 by 1 /
  \axis left ticks short numbered from -2 to 2 by 1 /
 \put {{\sevenrm $f_2(u)$}} [lb] at 0.1 2
 \put {{\sevenrm $u$}} [rb] at 5.3 -2.4
\setquadratic
\setdashes <3pt>
%\Black{
%\input ./pics_Nov10/titr_files/f2_files/fex2_beta1_T1_titr.tex
\plot
         0    0.0135
    0.1455    0.0498
    0.2911    0.1536
    0.4366    0.3901
    0.5822    0.8016
    0.7277    1.3276
    0.8733    1.7694
    1.0188    1.8927
    1.1644    1.6117
    1.3099    1.0657
    1.4555    0.4970
    1.6010    0.0723
    1.7466   -0.1818
    1.8921   -0.3206
    2.0376   -0.4060
    2.1832   -0.4748
    2.3287   -0.5420
    2.4743   -0.6122
    2.6198   -0.6863
    2.7654   -0.7647
    2.9109   -0.8473
    3.0565   -0.9342
    3.2020   -1.0253
    3.3476   -1.1206
    3.4931   -1.2202
    3.6386   -1.3240
    3.7842   -1.4320
    3.9297   -1.5443
    4.0753   -1.6608
    4.2208   -1.7815
    4.3664   -1.9065
    4.5119   -2.0000
    4.6575   -2.0000
    4.8030   -2.0000
    4.9486   -2.0000
/
%}
%
\setsolid
\Goldenrod{\relax 
\plot
         0    0.0134
    0.1453    0.0424
    0.2905    0.1227
    0.4358    0.3095
    0.5810    0.6370
    0.7263    1.0309
    0.8715    1.3284
    1.0168    1.3812
    1.1620    1.1609
    1.3073    0.7554
    1.4525    0.3134
    1.5978   -0.0341
    1.7430   -0.2586
    1.8883   -0.3975
    2.0335   -0.4930
    2.1787   -0.5714
    2.3240   -0.6400
    2.4693   -0.6969
    2.6145   -0.7416
    2.7598   -0.7716
    2.9050   -0.7809
    3.0503   -0.7688
    3.1955   -0.7330
    3.3408   -0.6674
    3.4860   -0.5702
    3.6313   -0.4385
    3.7765   -0.2652
    3.9218   -0.0444
    4.0670    0.2360
    4.2123    0.5942
    4.3575    1.0483
    4.5027    1.6179
    4.5370    1.7705
    4.5772    1.9507
    4.6173    2.1420
    % 4.6480    2.2924
    %4.7933    3.0237
    %4.9385    3.7640
/
 }\relax
\Orange{\relax 
\plot
         0    0.0133
    0.1453    0.0478
    0.2905    0.1434
    0.4358    0.3695
    0.5810    0.7716
    0.7263    1.2615
    0.8715    1.6424
    1.0168    1.7371
    1.1620    1.5049
    1.3073    1.0378
    1.4525    0.5053
    1.5978    0.0739
    1.7430   -0.2037
    1.8883   -0.3669
    2.0335   -0.4704
    2.1787   -0.5526
    2.3240   -0.6281
    2.4693   -0.6984
    2.6145   -0.7645
    2.7598   -0.8248
    2.9050   -0.8741
    3.0503   -0.9122
    3.1955   -0.9378
    3.3408   -0.9430
    3.4860   -0.9229
    3.6313   -0.8688
    3.7765   -0.7619
    3.9218   -0.5844
    4.0670   -0.3166
    4.2123    0.0597
    4.3575    0.5369
    4.5027    1.0799
    4.6480    1.6288
    %4.7933    2.1440
    %4.9385    2.6308
/
 }\relax
\Red{\relax 
\plot
         0    0.0133
    0.1453    0.0484
    0.2905    0.1473
    0.4358    0.3839
    0.5810    0.8108
    0.7263    1.3338
    0.8715    1.7390
    1.0168    1.8378
    1.1620    1.5894
    1.3073    1.0948
    1.4525    0.5380
    1.5978    0.0965
    1.7430   -0.1773
    1.8883   -0.3289
    2.0335   -0.4194
    2.1787   -0.4904
    2.3240   -0.5591
    2.4693   -0.6289
    2.6145   -0.7023
    2.7598   -0.7793
    2.9050   -0.8567
    3.0503   -0.9358
    3.1955   -1.0170
    3.3408   -1.0949
    3.4860   -1.1681
    3.6313   -1.2345
    3.7765   -1.2830
    3.9218   -1.3049
    4.0670   -1.2935
    4.2123   -1.2347
    4.3575   -1.1218
    4.5027   -0.9598
    4.6480   -0.7632
    4.7933   -0.5602
    4.9385   -0.3729
/
 }\relax
\LimeGreen{\relax 
\plot
         0    0.0133
    0.1453    0.0484
    0.2905    0.1474
    0.4358    0.3846
    0.5810    0.8130
    0.7263    1.3382
    0.8715    1.7447
    1.0168    1.8426
    1.1620    1.5916
    1.3073    1.0944
    1.4525    0.5365
    1.5978    0.0962
    1.7430   -0.1749
    1.8883   -0.3232
    2.0335   -0.4103
    2.1787   -0.4786
    2.3240   -0.5453
    2.4693   -0.6142
    2.6145   -0.6881
    2.7598   -0.7672
    2.9050   -0.8486
    3.0503   -0.9339
    3.1955   -1.0243
    3.3408   -1.1163
    3.4860   -1.2108
    3.6313   -1.3104
    3.7765   -1.4095
    3.9218   -1.5056
    4.0670   -1.5976
    4.2123   -1.6723
    4.3575   -1.7170
    4.5027   -1.7287
    4.6480   -1.7085
    4.7933   -1.6721
    4.9385   -1.6428
/
 }\relax
\PineGreen{\relax 
\plot
         0    0.0133
    0.1453    0.0484
    0.2905    0.1474
    0.4358    0.3846
    0.5810    0.8132
    0.7263    1.3385
    0.8715    1.7450
    1.0168    1.8428
    1.1620    1.5916
    1.3073    1.0942
    1.4525    0.5362
    1.5978    0.0961
    1.7430   -0.1747
    1.8883   -0.3225
    2.0335   -0.4092
    2.1787   -0.4771
    2.3240   -0.5434
    2.4693   -0.6121
    2.6145   -0.6861
    2.7598   -0.7653
    2.9050   -0.8469
    3.0503   -0.9329
    3.1955   -1.0245
    3.3408   -1.1185
    3.4860   -1.2166
    3.6313   -1.3223
    3.7765   -1.4316
    3.9218   -1.5431
    4.0670   -1.6576
    4.2123   -1.7628
    4.3575   -1.8464
    4.5027   -1.9047
    4.6480   -1.9369
    4.7933   -1.9550
    4.9385   -1.9805
/
 }\relax
\Blue{\relax 
\plot
         0    0.0133
    0.1453    0.0484
    0.2905    0.1474
    0.4358    0.3846
    0.5810    0.8132
    0.7263    1.3385
    0.8715    1.7450
    1.0168    1.8428
    1.1620    1.5916
    1.3073    1.0942
    1.4525    0.5362
    1.5978    0.0960
    1.7430   -0.1747
    1.8883   -0.3225
    2.0335   -0.4091
    2.1787   -0.4769
    2.3240   -0.5432
    2.4693   -0.6119
    2.6145   -0.6858
    2.7598   -0.7650
    2.9050   -0.8467
    3.0503   -0.9326
    3.1955   -1.0243
    3.3408   -1.1186
    3.4860   -1.2172
    3.6313   -1.3239
    3.7765   -1.4348
    3.9218   -1.5491
    4.0670   -1.6677
    4.2123   -1.7789
    4.3575   -1.8704
    4.5027   -1.9388
    4.6480   -1.9825
    4.7933   -2.0131
    4.9385   -2.0512
/
}\relax
\endpicture
}   % end of time trace data for f2:  $\beta=0.3$, $T=1$
\xfigdim=0.312\xfiglen
\yfigdim=0.033\yfiglen
\setbox\figurefive=\vbox{\hsize=\xfiglen
% This is for: time trace data, beta=1.3, T=1, f_1(u)
\beginpicture
\footnotesize
  \setcoordinatesystem units <\xfigdim,\yfigdim>  point at 0 -30
  \setplotarea x from 0 to 3.2, y from -30 to 0
  \axis bottom shiftedto y=-30 ticks short numbered from 0 to 3 by 1 /
  \axis left ticks short numbered from -30 to 0 by 5 /
 \put {{\sevenrm $f_1(u)$}} [lb] at 0.02 0
 \put {{\sevenrm $u$}} [rt] at 3.2 -28.4
\setquadratic
\setdashes <3pt>
\Black{
\plot
         0         0
    0.0895   -0.1322
    0.1791   -0.2120
    0.2686   -0.2481
    0.3582   -0.2491
    0.4477   -0.2237
    0.5373   -0.1803
    0.6268   -0.1278
    0.7164   -0.0746
    0.8059   -0.0294
    0.8955   -0.0008
    0.9850    0.0025
    1.0746   -0.0280
    1.1641   -0.1009
    1.2537   -0.2250
    1.3432   -0.4087
    1.4328   -0.6608
    1.5223   -0.9897
    1.6119   -1.4042
    1.7014   -1.9129
    1.7910   -2.5244
    1.8805   -3.2473
    1.9701   -4.0901
    2.0596   -5.0617
    2.1492   -6.1704
    2.2387   -7.4251
    2.3283   -8.8342
    2.4178  -10.4064
    2.5074  -12.1503
    2.5969  -14.0746
    2.6865  -16.1878
    2.7760  -18.4986
    2.8656  -21.0156
    2.9551  -23.7474
    3.0447  -26.7026
/
}
\setsolid
\Goldenrod{\relax 
\plot
        0    0.0011
    0.0896   -0.1139
    0.1791   -0.1769
    0.2686   -0.1986
    0.3582   -0.1889
    0.4478   -0.1589
    0.5373   -0.1183
    0.6269   -0.0763
    0.7164   -0.0394
    0.8060   -0.0111
    0.8955    0.0039
    0.9851    0.0002
    1.0746   -0.0290
    1.1642   -0.0898
    1.2537   -0.1882
    1.3433   -0.3294
    1.4328   -0.5178
    1.5224   -0.7576
    1.6119   -1.0523
    1.7015   -1.4040
    1.7910   -1.8146
    1.8806   -2.2848
    1.9701   -2.8134
    2.0597   -3.3963
    2.1492   -4.0267
    2.2388   -4.6911
    2.3283   -5.3663
    2.4179   -6.0156
    2.5074   -6.5776
    2.5970   -6.9559
    2.6865   -6.9800
    2.7761   -6.2855
    2.8656   -3.9768
%    2.9552    1.4306
%    3.0447    9.7614
/
 }\relax
\Orange{\relax 
\plot
         0    0.0014
    0.0896   -0.1284
    0.1791   -0.2107
    0.2686   -0.2483
    0.3582   -0.2478
    0.4478   -0.2192
    0.5373   -0.1726
    0.6269   -0.1194
    0.7164   -0.0683
    0.8060   -0.0255
    0.8955    0.0014
    0.9851    0.0040
    1.0746   -0.0272
    1.1642   -0.1014
    1.2537   -0.2271
    1.3433   -0.4126
    1.4328   -0.6653
    1.5224   -0.9928
    1.6119   -1.4024
    1.7015   -1.9008
    1.7910   -2.4958
    1.8806   -3.1972
    1.9701   -4.0176
    2.0597   -4.9722
    2.1492   -6.0779
    2.2388   -7.3303
    2.3283   -8.6497
    2.4179   -9.7829
    2.5074  -10.2069
    2.5970   -9.2325
    2.6865   -7.0876
    2.7761   -5.7532
    2.8656   -3.5751
%    2.9552    2.5558
%    3.0447   12.6771
/
 }\relax
\Red{\relax 
\plot
         0    0.0014
    0.0896   -0.1286
    0.1791   -0.2117
    0.2686   -0.2504
    0.3582   -0.2511
    0.4478   -0.2232
    0.5373   -0.1768
    0.6269   -0.1229
    0.7164   -0.0704
    0.8060   -0.0261
    0.8955    0.0026
    0.9851    0.0067
    1.0746   -0.0237
    1.1642   -0.0976
    1.2537   -0.2242
    1.3433   -0.4121
    1.4328   -0.6688
    1.5224   -1.0027
    1.6119   -1.4214
    1.7015   -1.9320
    1.7910   -2.5429
    1.8806   -3.2626
    1.9701   -4.1011
    2.0597   -5.0690
    2.1492   -6.1815
    2.2388   -7.4537
    2.3283   -8.8905
    2.4179  -10.4592
    2.5074  -11.9992
    2.5970  -13.1217
    2.6865  -13.3197
    2.7761  -12.6612
    2.8656  -12.0981
    2.9552  -11.2053
    3.0447   -9.6368
/
 }\relax
\LimeGreen{\relax 
\plot
         0    0.0014
    0.0896   -0.1286
    0.1791   -0.2117
    0.2686   -0.2504
    0.3582   -0.2511
    0.4478   -0.2233
    0.5373   -0.1769
    0.6269   -0.1229
    0.7164   -0.0704
    0.8060   -0.0260
    0.8955    0.0027
    0.9851    0.0068
    1.0746   -0.0235
    1.1642   -0.0975
    1.2537   -0.2240
    1.3433   -0.4119
    1.4328   -0.6688
    1.5224   -1.0026
    1.6119   -1.4217
    1.7015   -1.9330
    1.7910   -2.5447
    1.8806   -3.2654
    1.9701   -4.1040
    2.0597   -5.0691
    2.1492   -6.1720
    2.2388   -7.4230
    2.3283   -8.8305
    2.4179  -10.4025
    2.5074  -12.1329
    2.5970  -14.0043
    2.6865  -15.9691
    2.7761  -17.9590
    2.8656  -19.8738
    2.9552  -21.5504
    3.0447  -23.0640
/
 }\relax
\PineGreen{\relax 
\plot
         0    0.0014
    0.0896   -0.1286
    0.1791   -0.2117
    0.2686   -0.2504
    0.3582   -0.2511
    0.4478   -0.2233
    0.5373   -0.1769
    0.6269   -0.1229
    0.7164   -0.0704
    0.8060   -0.0260
    0.8955    0.0027
    0.9851    0.0068
    1.0746   -0.0235
    1.1642   -0.0975
    1.2537   -0.2240
    1.3433   -0.4118
    1.4328   -0.6687
    1.5224   -1.0026
    1.6119   -1.4218
    1.7015   -1.9332
    1.7910   -2.5451
    1.8806   -3.2658
    1.9701   -4.1043
    2.0597   -5.0688
    2.1492   -6.1702
    2.2388   -7.4181
    2.3283   -8.8216
    2.4179  -10.3925
    2.5074  -12.1369
    2.5970  -14.0645
    2.6865  -16.1750
    2.7761  -18.4687
    2.8656  -20.9525
    2.9552  -23.6160
    3.0447  -26.4857
/
 }\relax
%
%\Blue{\relax 
%\input ./pics_Nov10/titr_files/f1_files/f1_beta1dot3_T1_it10_titr.tex
% }\relax
%
\endpicture
}   % end of time trace data for f1:  $\beta=1.3$, $T=1$
\xfigdim=0.154\xfiglen
\yfigdim=0.222\yfiglen
\setbox\figuresix=\vbox{\hsize=\xfiglen
\beginpicture
\footnotesize
  \setcoordinatesystem units <\xfigdim,\yfigdim>  point at 0 -2.5
  \setplotarea x from 0.0 to 6.5, y from -2.5 to 2
  \axis bottom shiftedto y=-2.5 ticks short numbered from 0 to 6 by 1 /
  \axis left ticks short numbered from -2 to 2 by 1 /
 \put {{\sevenrm $f_2(u)$}} [lb] at 0.1 2
 \put {{\sevenrm $u$}} [rb] at 6.5 -2.35
\setquadratic
\setdashes <3pt>
%\Black{
%\input ./pics_Nov10/titr_files/f2_files/fex2_beta1dot3_T1_titr.tex
\plot
         0    0.0135
    0.1815    0.0669
    0.3631    0.2500
    0.5446    0.6796
    0.7262    1.3220
    0.9077    1.8343
    1.0893    1.8032
    1.2708    1.2244
    1.4524    0.5079
    1.6339    0.0012
    1.8155   -0.2577
    1.9970   -0.3849
    2.1786   -0.4727
    2.3601   -0.5568
    2.5417   -0.6460
    2.7232   -0.7416
    2.9048   -0.8438
    3.0863   -0.9525
    3.2679   -1.0679
    3.4494   -1.1898
    3.6310   -1.3184
    3.8125   -1.4535
    3.9941   -1.5953
    4.1756   -1.7436
    4.3572   -1.8985
    4.5387   -2.0000
    4.7203   -2.0000
    4.9018   -2.0000
    5.0834   -2.0000
    5.2649   -2.0000
    5.4464   -2.0000
    5.6280   -2.0000
    5.8095   -2.0000
    5.9911   -2.0000
    6.1726   -2.0000
/
%}
%
\setsolid
\Goldenrod{\relax 
\plot
         0    0.0136
    0.1804    0.0545
    0.3607    0.1952
    0.5411    0.5444
  0.7215   1.0256
  0.7816   1.1860
  0.8417   1.2820
  0.9018   1.3428
  0.9620   1.3970
  1.0221   1.3963
  1.0822   1.3082
  1.1423   1.1842
  1.2025   1.0595
    1.2626    0.9006
    1.4430    0.3368
    1.6233   -0.0947
    1.8037   -0.3472
    1.9841   -0.4866
    2.1644   -0.5822
    2.3448   -0.6817
    2.5252   -0.7596
    2.7056   -0.7940
    2.8859   -0.8200
    3.0663   -0.8249
    3.2467   -0.7614
    3.4270   -0.6498
    3.6074   -0.5029
    3.7878   -0.2817
    3.9682    0.0225
    4.1485    0.4241
    4.3289    0.9549
    4.5093    1.6159
    4.6294   1.9220
 %  4.6896    2.4970
 %   4.8700    3.7135
 %   5.0504    5.1645
 %   5.2307    6.9280
 %   5.4111    9.3986
 %   5.5915   12.3431
 %   5.7719   15.2802
 %   5.9522   18.7643
 %   6.1326   22.5747
/
 }\relax
\setquadratic
\Orange{\relax 
\plot
         0    0.0135
    0.1804    0.0630
    0.3607    0.2368
    0.4209    0.3435
    0.5411    0.6817
    0.7215    1.3095
  0.7816   1.5217
  0.8417   1.6531
  0.9018   1.7409
  0.9620   1.8231
  1.0221   1.8382
  1.0822   1.7418
  1.1423   1.5986
  1.2025   1.4559
    1.2626    1.2696
    1.4430    0.5732
    1.6233    0.0329
    1.8037   -0.2664
    1.9841   -0.4130
    2.1644   -0.5043
    2.3448   -0.6041
    2.5252   -0.6995
    2.7056   -0.7746
    2.8859   -0.8677
    3.0663   -0.9754
    3.2467   -1.0554
    3.4270   -1.1440
  3.5473  -1.2439
  3.6074  -1.2718
  %3.6675  -1.2769
  3.7276  -1.3008
  3.7878  -1.3389
  %3.8479  -1.3398
  3.9080  -1.2947
  3.9681  -1.2467
  4.0282  -1.1970
  4.0884  -1.0900
    4.1485   -0.9109
    4.3289   -0.1117
    4.5093    1.3134
    4.5694   1.9815
   % 4.6896    3.4172
   % 4.8700    6.1890
   % 5.0504    9.1401
   % 5.2307   12.0407
   % 5.4111   15.4632
   % 5.5915   19.1378
   % 5.7719   22.5124
   % 5.9522   26.5826
   % 6.1326   31.1171
/
 }\relax
\setlinear
\Red{\relax 
\plot
         0    0.0135
    0.1804    0.0632
    0.3607    0.2381
    0.5411    0.6892
    0.7215    1.3280
  0.7816   1.5436
  0.8417   1.6766
  0.9018   1.7650
  0.9620   1.8473
  1.0221   1.8611
  1.0822   1.7618
  1.1423   1.6152
  1.2025   1.4692
    1.2626    1.2794
    1.4430    0.5761
    1.6233    0.0384
    1.8037   -0.2507
    1.9841   -0.3859
    2.1644   -0.4674
    2.3448   -0.5587
    2.5252   -0.6502
    2.7056   -0.7285
    2.8859   -0.8311
    3.0663   -0.9565
    3.2467   -1.0642
    3.4270   -1.1925
    3.6074   -1.3834
    3.7878   -1.5579
    3.9682   -1.6625
    4.1485   -1.7098
    4.3289   -1.5612
    4.5093   -1.0473
    4.6896   -0.1676
    4.8700    1.0732
    4.9301   1.5298
    4.9902   2.0034
  %  5.0503   2.5614
  %  5.0504    2.5621
    %5.2307    4.1424
    %5.4111    5.8620
    %5.5915    7.4787
    %5.7719    8.7356
    %5.9522   10.0780
    %6.1326   11.5085
    /
 }\relax
\setquadratic
\LimeGreen{\relax 
\plot
         0    0.0135
    0.1804    0.0632
    0.3607    0.2381
    0.5411    0.6893
    0.7215    1.3283
  0.7816   1.5439
  0.8417   1.6770
  0.9018   1.7653
  0.9620   1.8476
  1.0221   1.8613
  1.0822   1.7620
  1.1423   1.6152
  1.2025   1.4692
    1.2626    1.2793
    1.4430    0.5758
    1.6233    0.0385
    1.8037   -0.2502
    1.9841   -0.3847
    2.1644   -0.4655
    2.3448   -0.5562
    2.5252   -0.6476
    2.7056   -0.7268
    2.8859   -0.8309
    3.0663   -0.9563
    3.2467   -1.0585
    3.4270   -1.1715
    3.6074   -1.3359
    3.7878   -1.4860
    3.9682   -1.6009
    4.1485   -1.7475
    4.3289   -1.8668
    4.5093   -1.8548
    4.6896   -1.7804
    4.8700   -1.6818
    5.0504   -1.4750
    5.2307   -1.1921
    5.4111   -0.9131
    5.5915   -0.6148
    5.7719   -0.3005
    5.9522   -0.0159
    6.1326    0.2281
/
 }\relax
\PineGreen{\relax 
\plot
         0    0.0135
    0.1804    0.0632
    0.3607    0.2381
    0.5411    0.6893
  0.7215   1.3284
  0.7816   1.5439
  0.8417   1.6770
  0.9018   1.7653
  0.9620   1.8476
  1.0221   1.8613
  1.0822   1.7620
  1.1423   1.6152
  1.2025   1.4692
  1.2626   1.2793
    1.4430    0.5758
    1.6233    0.0385
    1.8037   -0.2501
    1.9841   -0.3847
    2.1644   -0.4654
    2.3448   -0.5561
    2.5252   -0.6476
    2.7056   -0.7271
    2.8859   -0.8315
    3.0663   -0.9566
    3.2467   -1.0571
    3.4270   -1.1662
    3.6074   -1.3246
    3.7878   -1.4687
    3.9682   -1.5821
    4.1485   -1.7379
    4.3289   -1.8865
    4.5093   -1.9332
    4.6896   -1.9569
    4.8700   -2.0062
    5.0504   -1.9834
    5.2307   -1.9153
    5.4111   -1.9074
    5.5915   -1.9002
    5.7719   -1.8426
    5.9522   -1.8427
    6.1326   -1.9037 /
 }\relax
%
%\Blue{\relax 
%\input ./pics_Nov10/titr_files/f2_files/f2_beta1dot3_T1_it12_titr.tex
% }\relax
%
\endpicture
}   % end of time trace data for f2:  $\beta=1.3$, $T=1$

\begin{figure}[h]
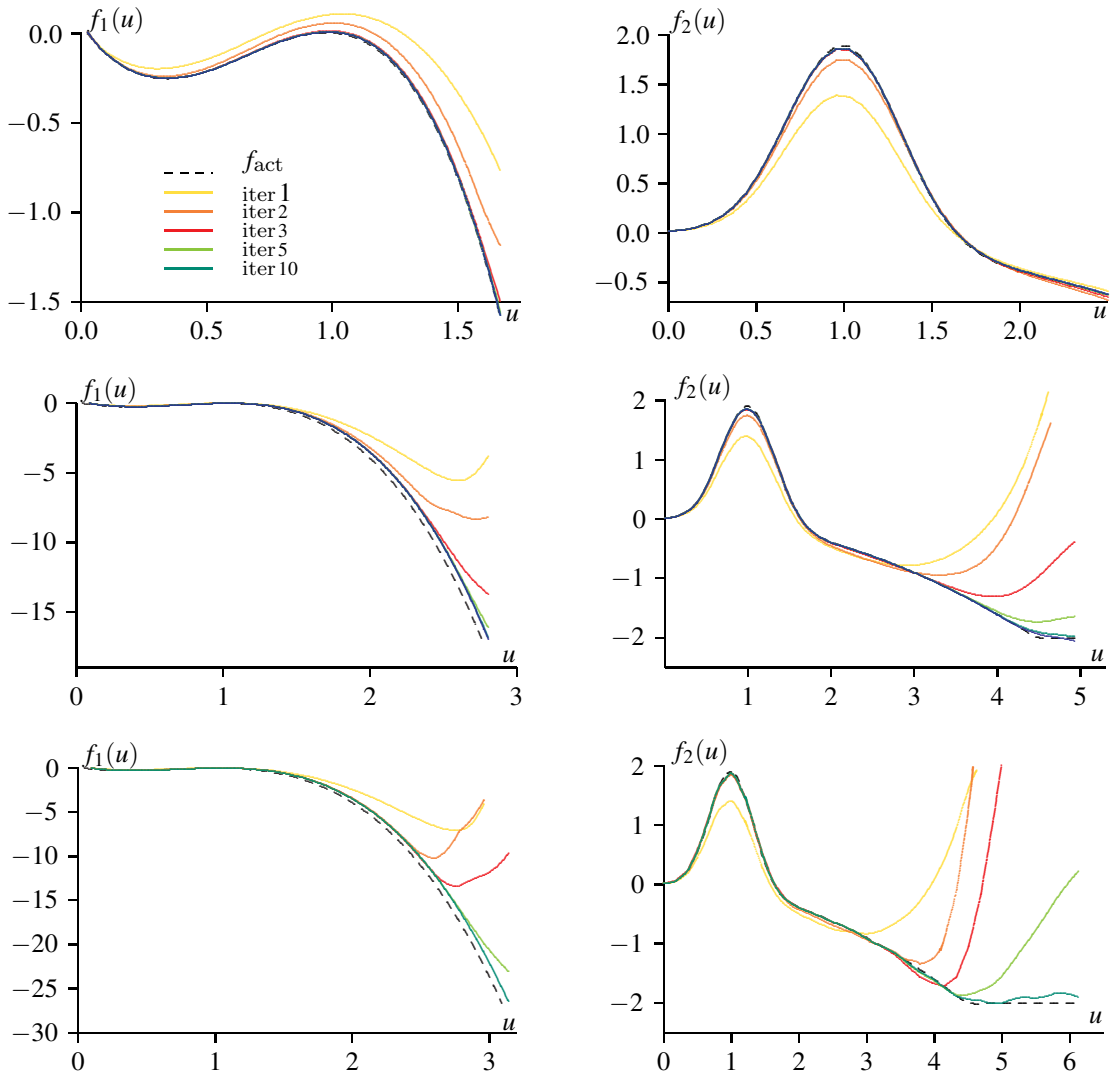

\hbox to\hsize{\hss\copy\figureone\hss\hss\hss\copy\figuretwo\hss}
\bigskip
\hbox to\hsize{\hss\copy\figurethree\hss\hss\hss\copy\figurefour\hss}
\bigskip
\hbox to\hsize{\hss\copy\figurefive\hss\hss\hss\copy\figuresix\hss}
%\centerline{\small {\bf Plots of iterations:}
%Time trace data with $T=1$ and $\beta=-1$ (top), $\beta=1$ (middle),
%$\beta=1.3$ (bottom)}
\caption{\small
Time trace data.\quad
Reconstructions of $f_1$ and $f_2$ at selected iterations as a function of
$\beta$:
$\beta=-1$ (top), $\beta=1$ (middle), $\beta=1.3$ (bottom).
$T =1$.
}
\label{fig:titr_reconstructions_f_with_beta}
\end{figure}
In these figures we show the exact function as a black dashed line
and the iterations in bold lines with the ordering:
yellow, orange, red, light green and dark green.
Typically these correspond to the second, fourth, sixth, eighth and tenth
but in he case of the slower converging scheme with $\beta=1.3$
these are at $1,\;3,\;6,\;9,\;12$.
Note for the larger $\beta$ values the reconstruction progresses
by improvements of the values of smaller magnitude first
due to the causality inherent in the time trace situation as opposed
to one from final time measurements.

%  \PiCTeX file for BB5   Final time recontructions
\input colordvi

\input pictex
\font\smallsymbol = cmmi8
%\newdimen\xfiglen \newdimen\yfiglen
\newdimen\xfigdim \newdimen\yfigdim
\newdimen\xmindim \newdimen\ymindim
\newdimen\xmaxdim \newdimen\ymaxdim
\newbox\figurelegendone
\newbox\figureseven
\newbox\figureeight
\newbox\figurenine
\newbox\figureten
\newbox\figureeleven
\newbox\figuretwelve
%%%%%%%%%%%%%%%%%%%
%\xfiglen=2.5true in
%\yfiglen=1.67true in
%%%%%%%%%%%%%%%%%%%%
\xfigdim=0.571\xfiglen
\yfigdim=0.67\yfiglen
\setbox\figureseven=\vbox{\hsize=\xfiglen
% This is for: final time data, beta=-1, T=1, f_1(u)
\beginpicture
\footnotesize
  \setcoordinatesystem units <\xfigdim,\yfigdim>  point at 0 -1.5
  \setplotarea x from 0 to 1.75, y from -1.5 to 0
  \axis bottom shiftedto y=-1.5 ticks short numbered from 0 to 1.5 by 0.5 /
% unlabeled short quantity 8 / /
  \axis left ticks short numbered from -1.5 to 0 by 0.5 /
% unlabeled short quantity 16 / /
 \put {{\sevenrm $f_1(u)$}} [lb] at 0.02 0
 \put {{\sevenrm $u$}} [rt] at 1.75 -1.54
\put {\copy\figurelegendtwo} [l] at 0.3 -1
%\setquadratic
\setdashes <3pt>
\Black{
\plot
         0         0
    0.0478   -0.0777
    0.0957   -0.1392
    0.1435   -0.1860
    0.1913   -0.2193
    0.2392   -0.2405
    0.2870   -0.2509
    0.3348   -0.2517
    0.3827   -0.2444
    0.4305   -0.2302
    0.4784   -0.2105
    0.5262   -0.1864
    0.5740   -0.1594
    0.6219   -0.1308
    0.6697   -0.1019
    0.7175   -0.0740
    0.7654   -0.0484
    0.8132   -0.0264
    0.8610   -0.0093
    0.9089    0.0014
    0.9567    0.0047
    1.0045   -0.0010
    1.0524   -0.0168
    1.1002   -0.0442
    1.1480   -0.0844
    1.1959   -0.1386
    1.2437   -0.2083
    1.2916   -0.2948
    1.3394   -0.3994
    1.3872   -0.5234
    1.4351   -0.6681
    1.4829   -0.8348
    1.5307   -1.0249
    1.5786   -1.2396
    1.6264   -1.4803
/
 }\relax
\setsolid
\Goldenrod{\relax 
\plot
         0   -0.0025
    0.0478   -0.0636
    0.0957   -0.1163
    0.1435   -0.1588
    0.1913   -0.1900
    0.2392   -0.2095
    0.2870   -0.2176
    0.3348   -0.2151
    0.3827   -0.2034
    0.4305   -0.1838
    0.4784   -0.1579
    0.5262   -0.1271
    0.5740   -0.0927
    0.6219   -0.0563
    0.6697   -0.0191
    0.7175    0.0176
    0.7654    0.0524
    0.8132    0.0841
    0.8610    0.1116
    0.9089    0.1338
    0.9567    0.1497
    1.0045    0.1581
    1.0524    0.1575
    1.1002    0.1466
    1.1480    0.1240
    1.1959    0.0890
    1.2437    0.0406
    1.2916   -0.0227
    1.3394   -0.1024
    1.3872   -0.1999
    1.4351   -0.3158
    1.4829   -0.4505
    1.5307   -0.6069
    1.5786   -0.7848
    1.6264   -0.9862
/
 }\relax
\Orange{\relax 
\plot
         0   -0.0026
    0.0478   -0.0673
    0.0957   -0.1236
    0.1435   -0.1699
    0.1913   -0.2049
    0.2392   -0.2280
    0.2870   -0.2399
    0.3348   -0.2414
    0.3827   -0.2338
    0.4305   -0.2185
    0.4784   -0.1971
    0.5262   -0.1711
    0.5740   -0.1420
    0.6219   -0.1112
    0.6697   -0.0801
    0.7175   -0.0502
    0.7654   -0.0227
    0.8132    0.0009
    0.8610    0.0195
    0.9089    0.0318
    0.9567    0.0370
    1.0045    0.0334
    1.0524    0.0198
    1.1002   -0.0055
    1.1480   -0.0437
    1.1959   -0.0958
    1.2437   -0.1630
    1.2916   -0.2464
    1.3394   -0.3480
    1.3872   -0.4690
    1.4351   -0.6096
    1.4829   -0.7700
    1.5307   -0.9511
    1.5786   -1.1475
    1.6264   -1.3467
/
 }\relax
\Red{\relax 
\plot
         0   -0.0026
    0.0478   -0.0688
    0.0957   -0.1266
    0.1435   -0.1743
    0.1913   -0.2106
    0.2392   -0.2352
    0.2870   -0.2485
    0.3348   -0.2514
    0.3827   -0.2453
    0.4305   -0.2314
    0.4784   -0.2115
    0.5262   -0.1869
    0.5740   -0.1592
    0.6219   -0.1298
    0.6697   -0.1002
    0.7175   -0.0717
    0.7654   -0.0456
    0.8132   -0.0234
    0.8610   -0.0062
    0.9089    0.0047
    0.9567    0.0084
    1.0045    0.0036
    1.0524   -0.0115
    1.1002   -0.0381
    1.1480   -0.0778
    1.1959   -0.1315
    1.2437   -0.2001
    1.2916   -0.2852
    1.3394   -0.3886
    1.3872   -0.5118
    1.4351   -0.6552
    1.4829   -0.8197
    1.5307   -1.0079
    1.5786   -1.2193
    1.6264   -1.4520
/
 }\relax
\LimeGreen{\relax 
\plot
         0   -0.0026
    0.0478   -0.0688
    0.0957   -0.1267
    0.1435   -0.1745
    0.1913   -0.2108
    0.2392   -0.2356
    0.2870   -0.2490
    0.3348   -0.2520
    0.3827   -0.2459
    0.4305   -0.2321
    0.4784   -0.2122
    0.5262   -0.1877
    0.5740   -0.1601
    0.6219   -0.1308
    0.6697   -0.1013
    0.7175   -0.0729
    0.7654   -0.0469
    0.8132   -0.0248
    0.8610   -0.0077
    0.9089    0.0031
    0.9567    0.0067
    1.0045    0.0017
    1.0524   -0.0135
    1.1002   -0.0404
    1.1480   -0.0803
    1.1959   -0.1341
    1.2437   -0.2029
    1.2916   -0.2884
    1.3394   -0.3922
    1.3872   -0.5159
    1.4351   -0.6600
    1.4829   -0.8254
    1.5307   -1.0149
    1.5786   -1.2284
    1.6264   -1.4683
/
 }\relax
\PineGreen{\relax 
\plot
         0   -0.0026
    0.0478   -0.0688
    0.0957   -0.1268
    0.1435   -0.1745
    0.1913   -0.2109
    0.2392   -0.2356
    0.2870   -0.2490
    0.3348   -0.2520
    0.3827   -0.2460
    0.4305   -0.2322
    0.4784   -0.2123
    0.5262   -0.1879
    0.5740   -0.1603
    0.6219   -0.1310
    0.6697   -0.1014
    0.7175   -0.0731
    0.7654   -0.0471
    0.8132   -0.0250
    0.8610   -0.0080
    0.9089    0.0029
    0.9567    0.0064
    1.0045    0.0014
    1.0524   -0.0139
    1.1002   -0.0407
    1.1480   -0.0806
    1.1959   -0.1345
    1.2437   -0.2034
    1.2916   -0.2888
    1.3394   -0.3927
    1.3872   -0.5164
    1.4351   -0.6606
    1.4829   -0.8261
    1.5307   -1.0157
    1.5786   -1.2294
    1.6264   -1.4696
/
 }\relax
%
%\Blue{\relax 
%\plot
%/
% }\relax
%
\endpicture
}   % end of final time data for f1:  $\beta=-1$, $T=1$
\xfigdim=0.40\xfiglen
\yfigdim=0.37\yfiglen
\setbox\figureeight=\vbox{\hsize=\xfiglen
\beginpicture
\footnotesize
  \setcoordinatesystem units <\xfigdim,\yfigdim>  point at 0 -0.70
  \setplotarea x from 0.0 to 2.5, y from -0.7 to 2
  \axis bottom shiftedto y=-0.7 ticks short numbered from 0 to 2 by 0.5 /
  \axis left ticks short numbered from -0.5 to 2 by 0.5 /
 \put {{\sevenrm $f_2(u)$}} [lb] at 0 2
 \put {{\sevenrm $u$}} [rt] at 2.5 -0.74
\setquadratic
\setquadratic
\setdashes <3pt>
%\Black{
\plot
         0    0.0135
    0.0751    0.0272
    0.1503    0.0518
    0.2254    0.0945
    0.3006    0.1643
    0.3757    0.2707
    0.4508    0.4223
    0.5260    0.6226
    0.6011    0.8664
    0.6762    1.1386
    0.7514    1.4119
    0.8265    1.6522
    0.9017    1.8243
    0.9768    1.8992
    1.0519    1.8625
    1.1271    1.7178
    1.2022    1.4858
    1.2773    1.1982
    1.3525    0.8917
    1.4276    0.5976
    1.5028    0.3393
    1.5779    0.1276
    1.6530   -0.0362
    1.7282   -0.1575
    1.8033   -0.2458
    1.8785   -0.3107
    1.9536   -0.3605
    2.0287   -0.4015
    2.1039   -0.4381
    2.1790   -0.4729
    2.2541   -0.5073
    2.3293   -0.5422
    2.4044   -0.5780
    2.4796   -0.6148
    2.5547   -0.6527
/
%}
%
\setsolid
\Goldenrod{\relax 
\plot
         0   -0.0046
    0.0751    0.0228
    0.1503    0.0660
    0.2254    0.1296
    0.3006    0.2216
    0.3757    0.3506
    0.4508    0.5233
    0.5260    0.7408
    0.6011    0.9961
    0.6762    1.2732
    0.7514    1.5473
    0.8265    1.7892
    0.9017    1.9689
    0.9768    2.0615
    1.0519    2.0526
    1.1271    1.9411
    1.2022    1.7410
    1.2773    1.4790
    1.3525    1.1891
    1.4276    0.9057
    1.5028    0.6555
    1.5779    0.4528
    1.6530    0.2987
    1.7282    0.1855
    1.8033    0.1030
    1.8785    0.0436
    1.9536    0.0021
    2.0287   -0.0288
    2.1039   -0.0587
    2.1790   -0.0910
    2.2541   -0.1199
    2.3293   -0.1454
    2.4044   -0.1800
    2.4796   -0.2100
    2.5547   -0.2427
/
 }\relax
\Orange{\relax 
\plot
         0   -0.0048
    0.0751    0.0103
    0.1503    0.0410
    0.2254    0.0922
    0.3006    0.1716
    0.3757    0.2879
    0.4508    0.4477
    0.5260    0.6519
    0.6011    0.8936
    0.6762    1.1564
    0.7514    1.4155
    0.8265    1.6417
    0.9017    1.8050
    0.9768    1.8805
    1.0519    1.8542
    1.1271    1.7254
    1.2022    1.5083
    1.2773    1.2303
    1.3525    0.9255
    1.4276    0.6286
    1.5028    0.3663
    1.5779    0.1528
    1.6530   -0.0111
    1.7282   -0.1331
    1.8033   -0.2236
    1.8785   -0.2905
    1.9536   -0.3391
    2.0287   -0.3768
    2.1039   -0.4130
    2.1790   -0.4512
    2.2541   -0.4855
    2.3293   -0.5158
    2.4044   -0.5537
    2.4796   -0.5849
    2.5547   -0.6166
/
 }\relax
\Red{\relax 
\plot
         0   -0.0048
    0.0751    0.0095
    0.1503    0.0392
    0.2254    0.0895
    0.3006    0.1681
    0.3757    0.2836
    0.4508    0.4424
    0.5260    0.6459
    0.6011    0.8868
    0.6762    1.1488
    0.7514    1.4072
    0.8265    1.6327
    0.9017    1.7952
    0.9768    1.8700
    1.0519    1.8428
    1.1271    1.7129
    1.2022    1.4947
    1.2773    1.2154
    1.3525    0.9093
    1.4276    0.6111
    1.5028    0.3477
    1.5779    0.1334
    1.6530   -0.0311
    1.7282   -0.1535
    1.8033   -0.2441
    1.8785   -0.3110
    1.9536   -0.3593
    2.0287   -0.3966
    2.1039   -0.4325
    2.1790   -0.4705
    2.2541   -0.5047
    2.3293   -0.5351
    2.4044   -0.5745
    2.4796   -0.6088
    2.5547   -0.6434
/
 }\relax
\LimeGreen{\relax 
\plot
         0   -0.0048
    0.0751    0.0095
    0.1503    0.0393
    0.2254    0.0895
    0.3006    0.1682
    0.3757    0.2837
    0.4508    0.4426
    0.5260    0.6461
    0.6011    0.8870
    0.6762    1.1491
    0.7514    1.4075
    0.8265    1.6330
    0.9017    1.7955
    0.9768    1.8704
    1.0519    1.8432
    1.1271    1.7133
    1.2022    1.4952
    1.2773    1.2159
    1.3525    0.9098
    1.4276    0.6117
    1.5028    0.3483
    1.5779    0.1340
    1.6530   -0.0304
    1.7282   -0.1527
    1.8033   -0.2434
    1.8785   -0.3101
    1.9536   -0.3584
    2.0287   -0.3957
    2.1039   -0.4316
    2.1790   -0.4695
    2.2541   -0.5037
    2.3293   -0.5341
    2.4044   -0.5734
    2.4796   -0.6077
    2.5547   -0.6420
/
 }\relax
\PineGreen{\relax 
\plot
         0   -0.0048
    0.0751    0.0095
    0.1503    0.0393
    0.2254    0.0896
    0.3006    0.1682
    0.3757    0.2838
    0.4508    0.4426
    0.5260    0.6462
    0.6011    0.8871
    0.6762    1.1492
    0.7514    1.4076
    0.8265    1.6331
    0.9017    1.7956
    0.9768    1.8705
    1.0519    1.8432
    1.1271    1.7135
    1.2022    1.4954
    1.2773    1.2161
    1.3525    0.9100
    1.4276    0.6119
    1.5028    0.3485
    1.5779    0.1343
    1.6530   -0.0301
    1.7282   -0.1525
    1.8033   -0.2432
    1.8785   -0.3098
    1.9536   -0.3581
    2.0287   -0.3954
    2.1039   -0.4312
    2.1790   -0.4691
    2.2541   -0.5033
    2.3293   -0.5337
    2.4044   -0.5730
    2.4796   -0.6072
    2.5547   -0.6413
/
 }\relax
%
%\Blue{\relax 
%\plot
%/
% }\relax
%
\endpicture
}   % end of final time data for f2:  $\beta=-1$, $T=1$
\xfigdim=0.435\xfiglen
\yfigdim=0.118\yfiglen
\setbox\figurenine=\vbox{\hsize=\xfiglen
% This is for: final time data, beta=0.3, T=1, f_1(u)
\beginpicture
\footnotesize
  \setcoordinatesystem units <\xfigdim,\yfigdim>  point at 0 -7
  \setplotarea x from 0 to 2.3, y from -7 to 1.5
  \axis bottom shiftedto y=-7 ticks short numbered from 0 to 2 by 0.5 /
  \axis left ticks short numbered from -7 to 1 by 2 /
 \put {{\sevenrm $f_1(u)$}} [lb] at 0.02 1
 \put {{\sevenrm $u$}} [rt] at 2.25 -7.3
\setquadratic
\setdashes <3pt>
\Black{
\plot
         0         0
    0.0650   -0.1016
    0.1301   -0.1743
    0.1951   -0.2213
    0.2602   -0.2463
    0.3252   -0.2523
    0.3903   -0.2426
    0.4553   -0.2206
    0.5204   -0.1894
    0.5854   -0.1527
    0.6505   -0.1135
    0.7155   -0.0752
    0.7806   -0.0409
    0.8456   -0.0142
    0.9107    0.0017
    0.9757    0.0036
    1.0408   -0.0119
    1.1058   -0.0482
    1.1708   -0.1084
    1.2359   -0.1959
    1.3009   -0.3139
    1.3660   -0.4658
    1.4310   -0.6551
    1.4961   -0.8848
    1.5611   -1.1584
    1.6262   -1.4791
    1.6912   -1.8496
    1.7563   -2.2744
    1.8213   -2.7563
    1.8864   -3.2984
    1.9514   -3.9043
    2.0165   -4.5772
    2.0815   -5.3190
    2.1466   -6.1355
    2.2116   -7.0290
/
 }\relax
\setsolid
\Goldenrod{\relax 
\plot
        0   -0.0157
    0.0650   -0.0039
    0.1301    0.0105
    0.1951    0.0360
    0.2602    0.0793
    0.3252    0.1433
    0.3903    0.2272
    0.4553    0.3277
    0.5204    0.4405
    0.5854    0.5611
    0.6505    0.6858
    0.7155    0.8111
    0.7806    0.9339
    0.8456    1.0514
    0.9107    1.1608
    0.9757    1.2588
    1.0408    1.3425
    1.1058    1.4084
    1.1708    1.4534
    1.2359    1.4753
    1.3009    1.4716
    1.3660    1.4399
    1.4310    1.3768
    1.4961    1.2784
    1.5611    1.1414
    1.6262    0.9634
    1.6912    0.7436
    1.7563    0.4795
    1.8213    0.1672
    1.8864   -0.1981
    1.9514   -0.6171
    2.0165   -1.0879
    2.0815   -1.6160
    2.1466   -2.2060
    2.2116   -2.8570
/
 }\relax
\Orange{\relax 
\plot
         0   -0.0161
    0.0650   -0.0573
    0.1301   -0.0964
    0.1951   -0.1245
    0.2602   -0.1349
    0.3252   -0.1248
    0.3903   -0.0955
    0.4553   -0.0502
    0.5204    0.0066
    0.5854    0.0704
    0.6505    0.1372
    0.7155    0.2034
    0.7806    0.2658
    0.8456    0.3214
    0.9107    0.3672
    0.9757    0.4003
    1.0408    0.4173
    1.1058    0.4150
    1.1708    0.3904
    1.2359    0.3412
    1.3009    0.2658
    1.3660    0.1618
    1.4310    0.0267
    1.4961   -0.1421
    1.5611   -0.3467
    1.6262   -0.5869
    1.6912   -0.8599
    1.7563   -1.1637
    1.8213   -1.4999
    1.8864   -1.8878
    1.9514   -2.3294
    2.0165   -2.8230
    2.0815   -3.3741
    2.1466   -3.9874
    2.2116   -4.6628
/
 }\relax
\Red{\relax 
\plot
         0   -0.0163
    0.0650   -0.0783
    0.1301   -0.1383
    0.1951   -0.1874
    0.2602   -0.2190
    0.3252   -0.2302
    0.3903   -0.2223
    0.4553   -0.1988
    0.5204   -0.1639
    0.5854   -0.1223
    0.6505   -0.0782
    0.7155   -0.0351
    0.7806    0.0035
    0.8456    0.0347
    0.9107    0.0554
    0.9757    0.0623
    1.0408    0.0517
    1.1058    0.0202
    1.1708   -0.0355
    1.2359   -0.1181
    1.3009   -0.2301
    1.3660   -0.3744
    1.4310   -0.5546
    1.4961   -0.7748
    1.5611   -1.0385
    1.6262   -1.3480
    1.6912   -1.7041
    1.7563   -2.1098
    1.8213   -2.5682
    1.8864   -3.0830
    1.9514   -3.6527
    2.0165   -4.2718
    2.0815   -4.9352
    2.1466   -5.6187
    2.2116   -6.3406
/
 }\relax
\LimeGreen{\relax 
\plot
         0   -0.0163
    0.0650   -0.0814
    0.1301   -0.1447
    0.1951   -0.1970
    0.2602   -0.2315
    0.3252   -0.2460
    0.3903   -0.2413
    0.4553   -0.2210
    0.5204   -0.1894
    0.5854   -0.1512
    0.6505   -0.1103
    0.7155   -0.0707
    0.7806   -0.0354
    0.8456   -0.0078
    0.9107    0.0093
    0.9757    0.0123
    1.0408   -0.0021
    1.1058   -0.0378
    1.1708   -0.0979
    1.2359   -0.1854
    1.3009   -0.3025
    1.3660   -0.4527
    1.4310   -0.6395
    1.4961   -0.8671
    1.5611   -1.1394
    1.6262   -1.4592
    1.6912   -1.8275
    1.7563   -2.2483
    1.8213   -2.7257
    1.8864   -3.2649
    1.9514   -3.8665
    2.0165   -4.5287
    2.0815   -5.2555
    2.1466   -6.0461
    2.2116   -6.8596
/
 }\relax
\PineGreen{\relax 
\plot
         0   -0.0163
    0.0650   -0.0818
    0.1301   -0.1455
    0.1951   -0.1982
    0.2602   -0.2332
    0.3252   -0.2481
    0.3903   -0.2438
    0.4553   -0.2239
    0.5204   -0.1928
    0.5854   -0.1550
    0.6505   -0.1146
    0.7155   -0.0754
    0.7806   -0.0407
    0.8456   -0.0135
    0.9107    0.0030
    0.9757    0.0055
    1.0408   -0.0095
    1.1058   -0.0458
    1.1708   -0.1065
    1.2359   -0.1948
    1.3009   -0.3128
    1.3660   -0.4638
    1.4310   -0.6514
    1.4961   -0.8802
    1.5611   -1.1539
    1.6262   -1.4754
    1.6912   -1.8458
    1.7563   -2.2688
    1.8213   -2.7488
    1.8864   -3.2913
    1.9514   -3.8978
    2.0165   -4.5672
    2.0815   -5.3059
    2.1466   -6.1187
    2.2116   -6.9960
/
 }\relax
%
%\Blue{\relax 
%\plot
%/
% }\relax
%
\endpicture
}   % end of final time data for f1:  $\beta=0.3$, $T=1$
\xfigdim=0.25\xfiglen
\yfigdim=0.25\yfiglen
\setbox\figureten=\vbox{\hsize=\xfiglen
\beginpicture
\footnotesize
  \setcoordinatesystem units <\xfigdim,\yfigdim>  point at 0 -1.5
  \setplotarea x from 0.0 to 4, y from -1.5 to 2.5
  \axis bottom shiftedto y=-1.5 ticks short numbered from 0 to 4 by 1 /
  \axis left ticks short numbered from -1.0 to 2 by 1 /
 \put {{\sevenrm $f_2(u)$}} [lt] at 0.1 2.5
 \put {{\sevenrm $u$}} [rb] at 4 -1.4
\setquadratic
\setdashes <3pt>
\Black{
\plot
         0    0.0135
    0.1064    0.0358
    0.2128    0.0857
    0.3191    0.1869
    0.4255    0.3659
    0.5319    0.6405
    0.6383    0.9988
    0.7447    1.3882
    0.8510    1.7176
    0.9574    1.8903
    1.0638    1.8466
    1.1702    1.5935
    1.2766    1.2014
    1.3829    0.7697
    1.4893    0.3823
    1.5957    0.0845
    1.7021   -0.1196
    1.8084   -0.2509
    1.9148   -0.3362
    2.0212   -0.3976
    2.1276   -0.4492
    2.2340   -0.4980
    2.3403   -0.5475
    2.4467   -0.5986
    2.5531   -0.6518
    2.6595   -0.7073
    2.7659   -0.7650
    2.8722   -0.8250
    2.9786   -0.8872
    3.0850   -0.9518
    3.1914   -1.0185
    3.2978   -1.0875
    3.4041   -1.1588
    3.5105   -1.2324
    3.6169   -1.3082
/
}
\setsolid
\Goldenrod{\relax 
\plot
         0   -0.0332
    0.1064    0.0550
    0.2128    0.1680
    0.3191    0.3315
    0.4255    0.5714
    0.5319    0.8954
    0.6383    1.2809
    0.7447    1.6764
    0.8510    2.0078
    0.9574    2.2026
    1.0638    2.2157
    1.1702    2.0470
    1.2766    1.7436
    1.3829    1.3834
    1.4893    1.0432
    1.5957    0.7752
    1.7021    0.5966
    1.8084    0.4960
    1.9148    0.4499
    2.0212    0.4353
    2.1276    0.4346
    2.2340    0.4367
    2.3403    0.4354
    2.4467    0.4293
    2.5531    0.4216
    2.6595    0.4165
    2.7659    0.4144
    2.8722    0.4097
    2.9786    0.3974
    3.0850    0.3834
    3.1914    0.3776
    3.2978    0.3710
    3.4041    0.3505
    3.5105    0.3439
    3.6169    0.3258
/
 }\relax
\Orange{\relax 
\plot
         0   -0.0335
    0.1064    0.0443
    0.2128    0.1464
    0.3191    0.2989
    0.4255    0.5275
    0.5319    0.8394
    0.6383    1.2118
    0.7447    1.5929
    0.8510    1.9087
    0.9574    2.0874
    1.0638    2.0844
    1.1702    1.9008
    1.2766    1.5842
    1.3829    1.2129
    1.4893    0.8632
    1.5957    0.5868
    1.7021    0.4003
    1.8084    0.2919
    1.9148    0.2380
    2.0212    0.2152
    2.1276    0.2062
    2.2340    0.1999
    2.3403    0.1901
    2.4467    0.1754
    2.5531    0.1588
    2.6595    0.1451
    2.7659    0.1346
    2.8722    0.1214
    2.9786    0.1008
    3.0850    0.0792
    3.1914    0.0665
    3.2978    0.0528
    3.4041    0.0252
    3.5105    0.0116
    3.6169   -0.0137
/
 }\relax
\Red{\relax 
\plot
         0   -0.0339
    0.1064    0.0255
    0.2128    0.1089
    0.3191    0.2427
    0.4255    0.4525
    0.5319    0.7457
    0.6383    1.0994
    0.7447    1.4616
    0.8510    1.7582
    0.9574    1.9171
    1.0638    1.8936
    1.1702    1.6882
    1.2766    1.3491
    1.3829    0.9547
    1.4893    0.5818
    1.5957    0.2822
    1.7021    0.0728
    1.8084   -0.0584
    1.9148   -0.1353
    2.0212   -0.1814
    2.1276   -0.2142
    2.2340   -0.2449
    2.3403   -0.2796
    2.4467   -0.3198
    2.5531   -0.3624
    2.6595   -0.4030
    2.7659   -0.4412
    2.8722   -0.4825
    2.9786   -0.5318
    3.0850   -0.5832
    3.1914   -0.6264
    3.2978   -0.6696
    3.4041   -0.7253
    3.5105   -0.7646
    3.6169   -0.8165
/
 }\relax
\LimeGreen{\relax 
\plot
         0   -0.0341
    0.1064    0.0177
    0.2128    0.0932
    0.3191    0.2193
    0.4255    0.4214
    0.5319    0.7067
    0.6383    1.0527
    0.7447    1.4071
    0.8510    1.6958
    0.9574    1.8467
    1.0638    1.8148
    1.1702    1.6007
    1.2766    1.2523
    1.3829    0.8483
    1.4893    0.4657
    1.5957    0.1565
    1.7021   -0.0622
    1.8084   -0.2029
    1.9148   -0.2888
    2.0212   -0.3442
    2.1276   -0.3862
    2.2340   -0.4263
    2.3403   -0.4708
    2.4467   -0.5210
    2.5531   -0.5737
    2.6595   -0.6247
    2.7659   -0.6737
    2.8722   -0.7264
    2.9786   -0.7876
    3.0850   -0.8513
    3.1914   -0.9076
    3.2978   -0.9656
    3.4041   -1.0368
    3.5105   -1.0931
    3.6169   -1.1540
/
 }\relax
\PineGreen{\relax 
\plot
         0   -0.0342
    0.1064    0.0155
    0.2128    0.0889
    0.3191    0.2128
    0.4255    0.4127
    0.5319    0.6959
    0.6383    1.0397
    0.7447    1.3919
    0.8510    1.6785
    0.9574    1.8271
    1.0638    1.7929
    1.1702    1.5762
    1.2766    1.2253
    1.3829    0.8186
    1.4893    0.4332
    1.5957    0.1213
    1.7021   -0.1003
    1.8084   -0.2435
    1.9148   -0.3321
    2.0212   -0.3901
    2.1276   -0.4348
    2.2340   -0.4776
    2.3403   -0.5249
    2.4467   -0.5780
    2.5531   -0.6336
    2.6595   -0.6877
    2.7659   -0.7398
    2.8722   -0.7958
    2.9786   -0.8607
    3.0850   -0.9283
    3.1914   -0.9888
    3.2978   -1.0517
    3.4041   -1.1292
    3.5105   -1.1930
    3.6169   -1.2625
/
 }\relax
\Blue{\relax 
\plot
         0   -0.0342
    0.1064    0.0150
    0.2128    0.0877
    0.3191    0.2111
    0.4255    0.4104
    0.5319    0.6930
    0.6383    1.0362
    0.7447    1.3878
    0.8510    1.6739
    0.9574    1.8220
    1.0638    1.7870
    1.1702    1.5698
    1.2766    1.2181
    1.3829    0.8108
    1.4893    0.4246
    1.5957    0.1120
    1.7021   -0.1103
    1.8084   -0.2542
    1.9148   -0.3435
    2.0212   -0.4022
    2.1276   -0.4476
    2.2340   -0.4911
    2.3403   -0.5392
    2.4467   -0.5930
    2.5531   -0.6495
    2.6595   -0.7044
    2.7659   -0.7573
    2.8722   -0.8143
    2.9786   -0.8802
    3.0850   -0.9488
    3.1914   -1.0104
    3.2978   -1.0747
    3.4041   -1.1540
    3.5105   -1.2208
    3.6169   -1.2952
/
 }\relax
\endpicture
}   % end of final time data for f2:  $\beta=0.3$, $T=1$
\xfigdim=0.571\xfiglen
\yfigdim=0.526\yfiglen
\setbox\figureeleven=\vbox{\hsize=\xfiglen
\beginpicture
\footnotesize
  \setcoordinatesystem units <\xfigdim,\yfigdim>  point at 0 -1.7
  \setplotarea x from 0.0 to 1.75, y from -1.7 to 0.2
  \axis bottom shiftedto y=-1.7 ticks short numbered from 0 to 1.5 by 0.5 /
  \axis left ticks short numbered from -1.5 to 0 by 0.5 /
 \put {{\sevenrm $f_1(u)$}} [lt] at 0.04 0.2
 \put {{\sevenrm $u$}} [rb] at 1.75 -1.6
\setquadratic
\setdashes <3pt>
%\Black{
\plot
         0         0
    0.0487   -0.0789
    0.0974   -0.1411
    0.1461   -0.1881
    0.1948   -0.2212
    0.2435   -0.2419
    0.2921   -0.2514
    0.3408   -0.2513
    0.3895   -0.2428
    0.4382   -0.2274
    0.4869   -0.2064
    0.5356   -0.1813
    0.5843   -0.1534
    0.6330   -0.1241
    0.6817   -0.0948
    0.7304   -0.0668
    0.7790   -0.0416
    0.8277   -0.0206
    0.8764   -0.0051
    0.9251    0.0035
    0.9738    0.0038
    1.0225   -0.0056
    1.0712   -0.0261
    1.1199   -0.0590
    1.1686   -0.1058
    1.2173   -0.1678
    1.2660   -0.2464
    1.3146   -0.3430
    1.3633   -0.4590
    1.4120   -0.5958
    1.4607   -0.7547
    1.5094   -0.9371
    1.5581   -1.1445
    1.6068   -1.3782
    1.6555   -1.6396
/
%}
%
\setsolid
\Goldenrod{\relax 
\plot
         0   -0.0074
    0.0487   -0.0649
    0.0974   -0.1218
    0.1461   -0.1716
    0.1948   -0.2097
    0.2435   -0.2341
    0.2921   -0.2450
    0.3408   -0.2439
    0.3895   -0.2327
    0.4382   -0.2133
    0.4869   -0.1874
    0.5356   -0.1564
    0.5843   -0.1216
    0.6330   -0.0842
    0.6817   -0.0453
    0.7304   -0.0062
    0.7790    0.0320
    0.8277    0.0682
    0.8764    0.1016
    0.9251    0.1310
    0.9738    0.1552
    1.0225    0.1727
    1.0712    0.1822
    1.1199    0.1826
    1.1686    0.1733
    1.2173    0.1534
    1.2660    0.1216
    1.3146    0.0762
    1.3633    0.0157
    1.4120   -0.0601
    1.4607   -0.1513
    1.5094   -0.2604
    1.5581   -0.3895
    1.6068   -0.5362
    1.6555   -0.7043
/
 }\relax
\Orange{\relax 
\plot
         0   -0.0074
    0.0487   -0.0568
    0.0974   -0.1056
    0.1461   -0.1475
    0.1948   -0.1778
    0.2435   -0.1945
    0.2921   -0.1981
    0.3408   -0.1901
    0.3895   -0.1728
    0.4382   -0.1480
    0.4869   -0.1178
    0.5356   -0.0838
    0.5843   -0.0473
    0.6330   -0.0098
    0.6817    0.0273
    0.7304    0.0627
    0.7790    0.0948
    0.8277    0.1226
    0.8764    0.1449
    0.9251    0.1604
    0.9738    0.1677
    1.0225    0.1652
    1.0712    0.1515
    1.1199    0.1253
    1.1686    0.0861
    1.2173    0.0331
    1.2660   -0.0346
    1.3146   -0.1183
    1.3633   -0.2185
    1.4120   -0.3338
    1.4607   -0.4617
    1.5094   -0.6010
    1.5581   -0.7463
    1.6068   -0.8894
    1.6555   -1.0529
/
 }\relax
\Red{\relax 
\plot
         0   -0.0074
    0.0487   -0.0638
    0.0974   -0.1196
    0.1461   -0.1684
    0.1948   -0.2056
    0.2435   -0.2292
    0.2921   -0.2396
    0.3408   -0.2383
    0.3895   -0.2276
    0.4382   -0.2093
    0.4869   -0.1854
    0.5356   -0.1575
    0.5843   -0.1270
    0.6330   -0.0953
    0.6817   -0.0637
    0.7304   -0.0337
    0.7790   -0.0067
    0.8277    0.0162
    0.8764    0.0337
    0.9251    0.0447
    0.9738    0.0476
    1.0225    0.0408
    1.0712    0.0228
    1.1199   -0.0078
    1.1686   -0.0518
    1.2173   -0.1102
    1.2660   -0.1843
    1.3146   -0.2761
    1.3633   -0.3870
    1.4120   -0.5171
    1.4607   -0.6661
    1.5094   -0.8355
    1.5581   -1.0248
    1.6068   -1.2243
    1.6555   -1.4082
/
 }\relax
\LimeGreen{\relax 
\plot
         0   -0.0074
    0.0487   -0.0656
    0.0974   -0.1232
    0.1461   -0.1739
    0.1948   -0.2129
    0.2435   -0.2382
    0.2921   -0.2504
    0.3408   -0.2510
    0.3895   -0.2420
    0.4382   -0.2255
    0.4869   -0.2033
    0.5356   -0.1771
    0.5843   -0.1484
    0.6330   -0.1183
    0.6817   -0.0885
    0.7304   -0.0602
    0.7790   -0.0348
    0.8277   -0.0137
    0.8764    0.0022
    0.9251    0.0114
    0.9738    0.0125
    1.0225    0.0039
    1.0712   -0.0161
    1.1199   -0.0488
    1.1686   -0.0951
    1.2173   -0.1561
    1.2660   -0.2333
    1.3146   -0.3286
    1.3633   -0.4437
    1.4120   -0.5788
    1.4607   -0.7344
    1.5094   -0.9131
    1.5581   -1.1162
    1.6068   -1.3391
    1.6555   -1.5587
/
 }\relax
\PineGreen{\relax 
\plot
         0   -0.0074
    0.0487   -0.0661
    0.0974   -0.1242
    0.1461   -0.1753
    0.1948   -0.2147
    0.2435   -0.2405
    0.2921   -0.2532
    0.3408   -0.2542
    0.3895   -0.2456
    0.4382   -0.2296
    0.4869   -0.2078
    0.5356   -0.1821
    0.5843   -0.1537
    0.6330   -0.1241
    0.6817   -0.0947
    0.7304   -0.0668
    0.7790   -0.0419
    0.8277   -0.0211
    0.8764   -0.0056
    0.9251    0.0032
    0.9738    0.0039
    1.0225   -0.0052
    1.0712   -0.0257
    1.1199   -0.0589
    1.1686   -0.1058
    1.2173   -0.1673
    1.2660   -0.2451
    1.3146   -0.3413
    1.3633   -0.4576
    1.4120   -0.5943
    1.4607   -0.7519
    1.5094   -0.9333
    1.5581   -1.1405
    1.6068   -1.3712
    1.6555   -1.6275
/
 }\relax
%
%\Blue{\relax 
% }\relax
%
\endpicture
}   % end of final time data for f1:  $\beta=0.5$, $T=0.7$
\xfigdim=0.4\xfiglen
\yfigdim=0.333\yfiglen
\setbox\figuretwelve=\vbox{\hsize=\xfiglen
% This is for: final time data, beta=1.3, T=1, f_1(u)
\beginpicture
\footnotesize
  \setcoordinatesystem units <\xfigdim,\yfigdim>  point at 0 -1
  \setplotarea x from 0 to 2.5, y from -1 to 2
  \axis bottom shiftedto y=-1 ticks short numbered from 0 to 2.5 by 0.5 /
  \axis left ticks short numbered from -1 to 2 by 1 /
 \put {{\sevenrm $f_2(u)$}} [lt] at 0.02 2
 \put {{\sevenrm $u$}} [rb] at 2.5 -0.92
\setquadratic
\setdashes <3pt>
\Black{
\plot
         0    0.0135
    0.0733    0.0268
    0.1467    0.0503
    0.2200    0.0906
    0.2933    0.1561
    0.3667    0.2557
    0.4400    0.3975
    0.5133    0.5856
    0.5867    0.8168
    0.6600    1.0784
    0.7333    1.3477
    0.8067    1.5940
    0.8800    1.7836
    0.9533    1.8874
    1.0266    1.8875
    1.1000    1.7815
    1.1733    1.5834
    1.2466    1.3201
    1.3200    1.0245
    1.3933    0.7287
    1.4666    0.4582
    1.5400    0.2283
    1.6133    0.0447
    1.6866   -0.0951
    1.7600   -0.1983
    1.8333   -0.2740
    1.9066   -0.3307
    1.9800   -0.3756
    2.0533   -0.4138
    2.1266   -0.4487
    2.2000   -0.4825
    2.2733   -0.5162
    2.3466   -0.5504
    2.4200   -0.5855
    2.4933   -0.6216
/
}
\setsolid
\Goldenrod{\relax 
\plot
         0   -0.0097
    0.0733    0.0330
    0.1467    0.0831
    0.2200    0.1488
    0.2933    0.2406
    0.3667    0.3683
    0.4400    0.5386
    0.5133    0.7530
    0.5867    1.0057
    0.6600    1.2832
    0.7333    1.5644
    0.8067    1.8231
    0.8800    2.0311
    0.9533    2.1631
    1.0266    2.2016
    1.1000    2.1406
    1.1733    1.9883
    1.2466    1.7660
    1.3200    1.5042
    1.3933    1.2355
    1.4666    0.9872
    1.5400    0.7764
    1.6133    0.6092
    1.6866    0.4836
    1.7600    0.3937
    1.8333    0.3324
    1.9066    0.2917
    1.9800    0.2618
    2.0533    0.2355
    2.1266    0.2117
    2.2000    0.1931
    2.2733    0.1747
    2.3466    0.1506
    2.4200    0.1327
    2.4933    0.1120
/
 }\relax
\Orange{\relax 
\plot
         0   -0.0098
    0.0733    0.0208
    0.1467    0.0587
    0.2200    0.1122
    0.2933    0.1916
    0.3667    0.3066
    0.4400    0.4636
    0.5133    0.6640
    0.5867    0.9018
    0.6600    1.1630
    0.7333    1.4264
    0.8067    1.6654
    0.8800    1.8521
    0.9533    1.9614
    1.0266    1.9763
    1.1000    1.8916
    1.1733    1.7164
    1.2466    1.4730
    1.3200    1.1925
    1.3933    0.9082
    1.4666    0.6474
    1.5400    0.4271
    1.6133    0.2530
    1.6866    0.1225
    1.7600    0.0293
    1.8333   -0.0341
    1.9066   -0.0764
    1.9800   -0.1071
    2.0533   -0.1338
    2.1266   -0.1574
    2.2000   -0.1756
    2.2733   -0.1926
    2.3466   -0.2144
    2.4200   -0.2279
    2.4933   -0.2437
/
 }\relax
\Red{\relax 
\plot
         0   -0.0099
    0.0733    0.0156
    0.1467    0.0482
    0.2200    0.0964
    0.2933    0.1705
    0.3667    0.2802
    0.4400    0.4320
    0.5133    0.6271
    0.5867    0.8596
    0.6600    1.1156
    0.7333    1.3736
    0.8067    1.6073
    0.8800    1.7885
    0.9533    1.8922
    1.0266    1.9010
    1.1000    1.8098
    1.1733    1.6275
    1.2466    1.3766
    1.3200    1.0881
    1.3933    0.7955
    1.4666    0.5262
    1.5400    0.2974
    1.6133    0.1149
    1.6866   -0.0238
    1.7600   -0.1250
    1.8333   -0.1963
    1.9066   -0.2464
    1.9800   -0.2852
    2.0533   -0.3202
    2.1266   -0.3524
    2.2000   -0.3793
    2.2733   -0.4059
    2.3466   -0.4375
    2.4200   -0.4618
    2.4933   -0.4840
/
 }\relax
\LimeGreen{\relax 
\plot
       0   -0.0099
    0.0733    0.0137
    0.1467    0.0446
    0.2200    0.0909
    0.2933    0.1633
    0.3667    0.2712
    0.4400    0.4211
    0.5133    0.6144
    0.5867    0.8450
    0.6600    1.0992
    0.7333    1.3554
    0.8067    1.5872
    0.8800    1.7666
    0.9533    1.8682
    1.0266    1.8749
    1.1000    1.7814
    1.1733    1.5967
    1.2466    1.3431
    1.3200    1.0519
    1.3933    0.7563
    1.4666    0.4839
    1.5400    0.2521
    1.6133    0.0665
    1.6866   -0.0753
    1.7600   -0.1795
    1.8333   -0.2537
    1.9066   -0.3068
    1.9800   -0.3485
    2.0533   -0.3865
    2.1266   -0.4219
    2.2000   -0.4522
    2.2733   -0.4825
    2.3466   -0.5184
    2.4200   -0.5478
    2.4933   -0.5750
/
 }\relax
\PineGreen{\relax 
\plot
         0   -0.0099
    0.0733    0.0132
    0.1467    0.0434
    0.2200    0.0893
    0.2933    0.1610
    0.3667    0.2683
    0.4400    0.4177
    0.5133    0.6104
    0.5867    0.8405
    0.6600    1.0941
    0.7333    1.3497
    0.8067    1.5810
    0.8800    1.7597
    0.9533    1.8607
    1.0266    1.8668
    1.1000    1.7726
    1.1733    1.5872
    1.2466    1.3328
    1.3200    1.0406
    1.3933    0.7441
    1.4666    0.4708
    1.5400    0.2380
    1.6133    0.0514
    1.6866   -0.0913
    1.7600   -0.1965
    1.8333   -0.2717
    1.9066   -0.3256
    1.9800   -0.3683
    2.0533   -0.4073
    2.1266   -0.4436
    2.2000   -0.4750
    2.2733   -0.5064
    2.3466   -0.5438
    2.4200   -0.5752
    2.4933   -0.6043
/
 }\relax
%
%\Blue{\relax 
% }\relax
%
\endpicture
}   % end of final time data for f1:  $\beta=1.3$, $T=1$

\begin{figure}[h]
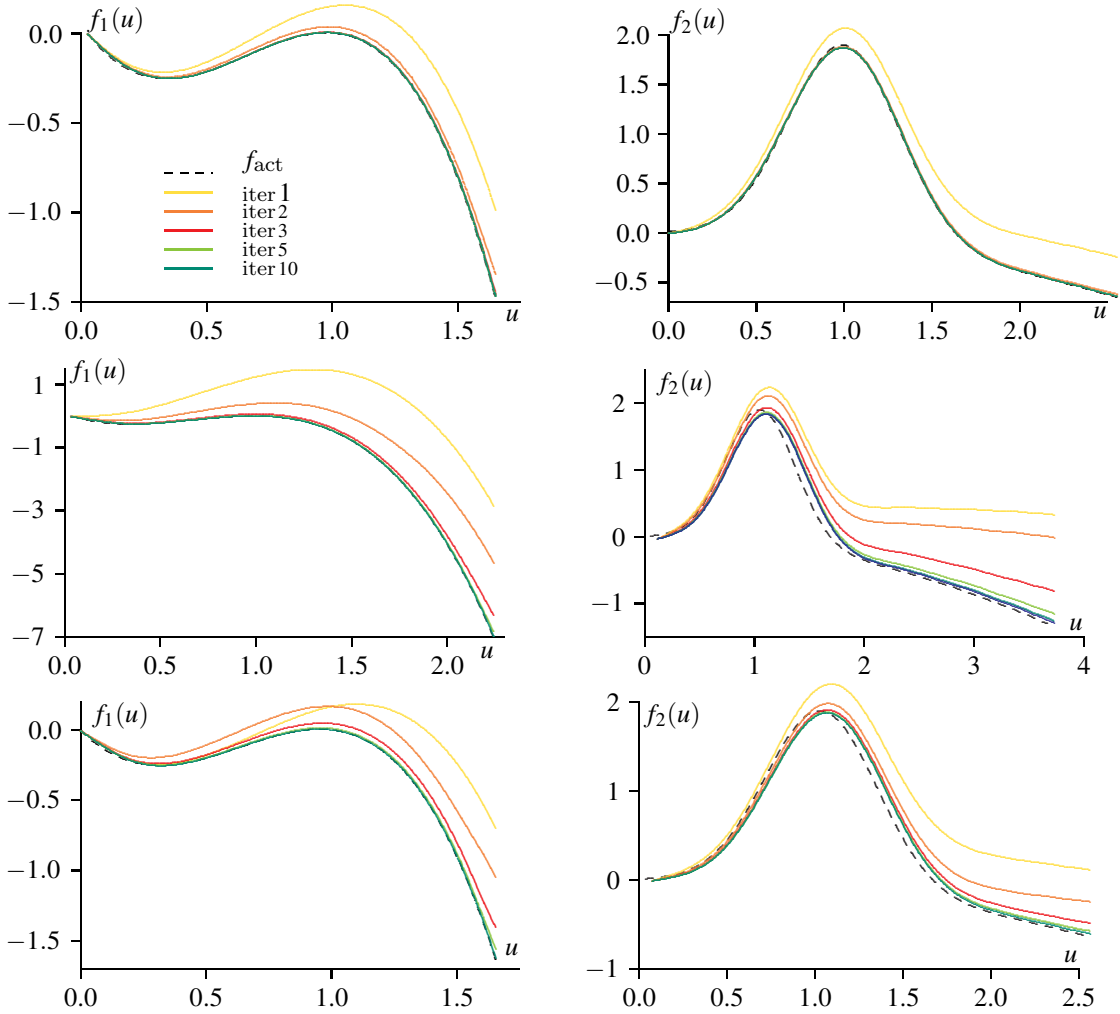

\hbox to\hsize{\hss\copy\figureseven\hss\hss\hss\copy\figureeight\hss}
\medskip
\hbox to\hsize{\hss\copy\figurenine\hss\hss\hss\copy\figureten\hss}
\medskip
\hbox to\hsize{\hss\copy\figureeleven\hss\hss\hss\copy\figuretwelve\hss}
%\centerline{\small {\bf Plots of iterations:}
%Final time data. $T=1$, $\beta=-1$ (top), $\beta=0.3$ (middle);
%$T=0.7$, $\beta=0.5$ (bottom)}
\caption{\small
 Final time data.\quad
Reconstructions of $f_1$ and $f_2$ at selected  iterations as a function of
$\beta$. 
$\beta=-1$ (top), $\beta=0.3$ (middle), $\beta=0.5$ (bottom)
}
\label{fig:fiti_reconstructions_f_with_beta}
\end{figure}
We should expect in fact that the convergence rate will decrease with
increasing $\beta$ and this is entirely borne out by numerical computations
As expected, there is also a difference depending on the sign of
$\beta$: the case of $\beta<0$ corresponds to a negative interaction
and might be expected to be a more stable case and
turns out to be the situation.
The case $\beta>0$ indicates symbiosis between the quantities represented
by $u$ and $v$ and should be expected to be a more unstable situation.
This is certainly the outcome in the ordinary differential equation
version and sufficiently strong reaction terms will dominate the diffusion
effect supplied by the elliptic operator.
Thus one of the two of $u$ or $v$ should dominate and lead to
blow up of the solution - perhaps in finite time.
The conditions for this are quite complex.
In the case of an ordinary differential equation the nonlinear
right hand side terms must, in general, be uniformly Lipschitz
and if they are positive and have asymptotic growth of order $u^\delta$ for
$\delta>1$ then this occurs.  Under the effects of a diffusion operator
this is more complex; the power law growth is coupled to, among other things,
the dimension of the space, 
see \cite{HollisLMartinPierre:1987,Pierre:2010,Wang:2000}.
The limitation we must impose
in this case is that the measurements taken be up to or at a time $T$ before
the onset of the solution failing to exist.
Of course, long before this situation the iteration scheme would most likely
have failed.
This is apparent from Figure~\oldref{fig:cgt_rates_beta} which shows
the reconstructions for the  values $\beta=-1,\,0.3,\,1,\,1.3$ and
the decreasing rate of convergence with increasing $\beta$ is evident.
The choice of the largest of these values of $\beta$ closely corresponds
to the onset of failure of the iteration scheme in the case of time trace
data on $0\leq t\leq T=1$, and to a smaller combination of $\beta$ and $T$
in the case of final time data.
%\begin{wrapfigure}{r}{0.4\linewidth}
%\centering
%\box\figurelegendtwo
%%\rule{0.9\linewidth}{0.75\linewidth}
%\caption{Legend for figures.}
%\label{fig:legend}
%\end{wrapfigure}

However, and we stress this part, we make no prior assumptions on any
of the functions $f_i$ or $\phi_i$ of analyticity or indeed any form
that would be non-local in nature; the possibility that these reaction
terms could have very different properties in different ranges of their
arguments is a central hypothesis.
Thus simply assuming that small time measurements leading to small values
of $u$ and $v$ and the associated reconstruction of $f_i$ and $\phi_i$
over this range extends to a global reconstruction is too restrictive
in many physical situations.

Returning to Figure~\oldref{fig:cgt_rates_beta} the topmost pair shows the
decrease in $\|f_i^{(n)} - f_i\|$ for $i=1,\,2$
using time trace data on the right boundary point where the solution
is being driven.
For both $f_1$ and $f_2$ the initial approximation was the zero function
showing that a good initial approximation isn't critical here.
For $\beta$ greater than about $1.3$ the scheme no longer converged.
The bottom pair shows the analogous result for final time data.
The scheme converged for $\beta$ less than about $0.3$ with the time
measurement taken to be $T=1$, but if this were reduced, somewhat larger
values of $\beta$ can be used.
The case of $\beta=0.5$ and $T=0.75$ which approximately corresponds
to the maximum value for $T$ with is $\beta$, illustrates this situation.

The actual final reconstructions obtained from the above choices of
$\beta$ differ slightly.
But as Figure~\oldref{fig:cgt_rates_beta} shows the individual iterations
show considerable differences.
Note also that the range of values of $u$ and $v$ depend quite strongly
on $\beta$.
The reconstructions obtained are shown in
Figure~\oldref{fig:titr_reconstructions_f_with_beta}
for the case of time trace data and in
Figure~\oldref{fig:fiti_reconstructions_f_with_beta}
for the case of final time data with parameters as described above.

% values below are important - the x length and y height of each picture
%\xfiglen=2.5 true in  
%\yfiglen=1.6 true in
%
%\input recon_fig_legend
%\copy\figurelegendtwo

\subsection{Reconstructions under added noise}

We show below similar reconstructions but under noise in the data.
Although from a technical perspective of considering only the norms of
the unknowns and the data space these recovery problems appear to be
only mildly ill-conditioned this does not take into account the relevant
constants that are strongly influenced by the nonlinearities in the model.
Thus while good reconstructions can be obtained with a few percent added
uniform noise, much more leads to quite poor reconstructions.
In Figure~\oldref{fig:titr_reconstructions_f_with_noise} we show the same
functions $f_1(u)$ and $f_2(v)$ reconstructed under noise levels of
$0.1\%$ and $1\%$.
These show the quite obvious distinction between the previous reconstructions
where a very mild filter was added of the type noted above to offset
any inaccuracy in the direct data due to truncation error in the direct solver
and filtering to remove active noise.

%  \PiCTeX file for BB5 - time trace data for $f_1$ and $f_2$ under noise
\input colordvi

\input pictex
\font\smallsymbol = cmmi8
%\newdimen\xfiglen \newdimen\yfiglen
\newdimen\xfigdim \newdimen\yfigdim
\newdimen\xmindim \newdimen\ymindim
\newdimen\xmaxdim \newdimen\ymaxdim
\newbox\figurelegendone
\newbox\figureone
\newbox\figuretwo
\newbox\figurethree 
\newbox\figurefour
%\newbox\figurefive
%\newbox\figuresix
%\newbox\figureseven
%\newbox\figureeight
%\newbox\figurenine
%\newbox\figureten
%%%%%%%%%%%%%%%%%%%
%\xfiglen=2.5true in
%\yfiglen=1.6true in
%%%%%%%%%%%%%%%%%%%%

\setbox\figurelegendone=\hbox{
\small
%\eightpoint
\beginpicture
  \setcoordinatesystem units <0.3\xfiglen,0.5\yfiglen> %point at 0 -0.7
  \setplotarea x from 0 to 0.8, y from 0 to 0.7
\linethickness=0.7pt
\ninerm
\setsolid
  \Goldenrod{\relax
  \putrule from 0 0.0 to 0.25 0.0 }\relax
  \put {iteration 10} [l] at 0.35 0.0
  \putrule from 0 0.2 to 0.25 0.2  %\relax
  \Orange{\relax
  \putrule from 0 0.2 to 0.25 0.2 }\relax
  \put {iteration 3} [l] at 0.35 0.2
  \LimeGreen{\relax
  \putrule from 0 0.4 to 0.25 0.4 }\relax
  \putrule from 0 0.4 to 0.2 0.4
  \put {iteration 1} [l] at 0.35 0.4
%\setquadratic
  \setdashes <3pt>
  \Black
  \putrule from 0 0.6 to 0.25 0.6
  \put {actual $q$} [l] at 0.35 0.6
\endpicture
\relax
}
\xfigdim=0.571\xfiglen
\yfigdim=0.625\yfiglen
\setbox\figureone=\vbox{\hsize=\xfiglen
% This is for: time trace data, beta=-1, T=1, f_1(u)
\beginpicture
\footnotesize
  \setcoordinatesystem units <\xfigdim,\yfigdim>  point at 0 -1.6
  \setplotarea x from 0 to 1.75, y from -1.6 to 0
  \axis bottom shiftedto y=-1.6 ticks short numbered from 0 to 1.5 by 0.5 /
% unlabeled short quantity 8 / /
  \axis left ticks short numbered from -1.5 to 0 by 0.5 /
% unlabeled short quantity 16 / /
 \put {{\sevenrm $f_1(u)$}} [lb] at 0.02 0
 \put {{\sevenrm $u$}} [rt] at 1.75 -1.54
\put {\copy\figurelegendtwo} [l] at 0.3 -1
%\setquadratic
\footnotesize
%\setquadratic
\setdashes <3pt>
\Black{        % exact
\plot
         0         0
    0.0484   -0.0784
    0.0968   -0.1404
    0.1452   -0.1873
    0.1935   -0.2205
    0.2419   -0.2414
    0.2903   -0.2512
    0.3387   -0.2514
    0.3871   -0.2434
    0.4355   -0.2284
    0.4838   -0.2079
    0.5322   -0.1831
    0.5806   -0.1555
    0.6290   -0.1265
    0.6774   -0.0973
    0.7258   -0.0694
    0.7742   -0.0440
    0.8225   -0.0226
    0.8709   -0.0065
    0.9193    0.0029
    0.9677    0.0042
    1.0161   -0.0038
    1.0645   -0.0226
    1.1129   -0.0535
    1.1612   -0.0978
    1.2096   -0.1570
    1.2580   -0.2324
    1.3064   -0.3253
    1.3548   -0.4372
    1.4032   -0.5693
    1.4515   -0.7230
    1.4999   -0.8997
    1.5483   -1.1008 
    1.5967   -1.3276
    1.6451   -1.5814
/
}\relax
\setsolid
\Goldenrod{\relax 
\plot
  0.0000   0.0093
  0.0484  -0.0726
  0.0968  -0.1224
  0.1452  -0.1585
  0.1935  -0.1816
  0.2419  -0.1934
  0.2903  -0.1959
  0.3387  -0.1910
  0.3871  -0.1795
  0.4355  -0.1630
  0.4839  -0.1407
  0.5322  -0.1132
  0.5806  -0.0823
  0.6290  -0.0511
  0.6774  -0.0204
  0.7258   0.0095
  0.7742   0.0370
  0.8226   0.0608
  0.8710   0.0808
  0.9193   0.0962
  0.9677   0.1058
  1.0161   0.1101
  1.0645   0.1054
  1.1129   0.0900
  1.1613   0.0684
  1.2580  -0.0045
  1.3064  -0.0522
  1.3548  -0.1139
  1.4032  -0.1909
  1.4516  -0.2822
  1.5000  -0.3763
  1.5484  -0.4846
  1.5967  -0.6441
  1.6451  -0.8988
/
 }\relax
\Orange{\relax 
\plot
  0.0000   0.0095
  0.0484  -0.0785
  0.0968  -0.1364
  0.1452  -0.1814
  0.1935  -0.2124
  0.2419  -0.2307
  0.2903  -0.2382
  0.3387  -0.2369
  0.3871  -0.2275
  0.4355  -0.2120
  0.4839  -0.1894
  0.5322  -0.1605
  0.5806  -0.1279
  0.6290  -0.0951
  0.6774  -0.0629
  0.7258  -0.0319
  0.7742  -0.0037
  0.8226   0.0200
  0.8710   0.0389
  0.9193   0.0522
  0.9677   0.0580
  1.0161   0.0564
  1.0645   0.0430
  1.1129   0.0159
  1.1613  -0.0203
  1.2097  -0.0700
  1.2580  -0.1320
  1.3064  -0.2056
  1.3548  -0.2994
  1.4032  -0.4140
  1.4516  -0.5475
  1.5000  -0.6956
  1.5484  -0.8863
  1.5967  -1.1233
/
 }\relax
\Red{\relax 
\plot
  0.0000   0.0095
  0.0484  -0.0790
  0.0968  -0.1381
  0.1452  -0.1847
  0.1935  -0.2176
  0.2419  -0.2378
  0.2903  -0.2470
  0.3387  -0.2474
  0.3871  -0.2396
  0.4355  -0.2255
  0.4839  -0.2043
  0.5322  -0.1767
  0.5806  -0.1455
  0.6290  -0.1141
  0.6774  -0.0833
  0.7258  -0.0539
  0.7742  -0.0273
  0.8226  -0.0051
  0.8710   0.0121
  0.9193   0.0234
  0.9677   0.0271
  1.0161   0.0227
  1.0645   0.0058
  1.1129  -0.0257
  1.1613  -0.0674
  1.2097  -0.1239
  1.2580  -0.1938
  1.3064  -0.2764
  1.3548  -0.3805
  1.4032  -0.5084
  1.4516  -0.6597
  1.5000  -0.8263
  1.5484  -1.0281
  1.5967   -1.35
  1.6451   -1.6
/
 }\relax
\LimeGreen{\relax 
\plot
  0.0000   0.0095
  0.0484  -0.0790
  0.0968  -0.1383
  0.1452  -0.1852
  0.1935  -0.2184
  0.2419  -0.2391
  0.2903  -0.2489
  0.3387  -0.2497
  0.3871  -0.2425
  0.4355  -0.2290
  0.4839  -0.2085
  0.5322  -0.1817
  0.5806  -0.1513
  0.6290  -0.1207
  0.6774  -0.0909
  0.7258  -0.0623
  0.7742  -0.0366
  0.8226  -0.0153
  0.8710   0.0012
  0.9193   0.0118
  0.9677   0.0146
  1.0161   0.0094
  1.0645  -0.0084
  1.1129  -0.0414
  1.1613  -0.0848
  1.2097  -0.1433
  1.2580  -0.2157
  1.3064  -0.3006
  1.3548  -0.4075
  1.4032  -0.5396
  1.4516  -0.6961
  1.5000  -0.8648
  1.5484  -1.0655
  1.5967  -1.3496
  1.6451  -1.7556
/
 }\relax
\PineGreen{\relax 
\plot
  0.0000   0.0095
  0.0484  -0.0790
  0.0968  -0.1384
  0.1452  -0.1853
  0.1935  -0.2186
  0.2419  -0.2394
  0.2903  -0.2492
  0.3387  -0.2503
  0.3871  -0.2432
  0.4355  -0.2299
  0.4839  -0.2096
  0.5322  -0.1832
  0.5806  -0.1531
  0.6290  -0.1229
  0.6774  -0.0934
  0.7258  -0.0652
  0.7742  -0.0399
  0.8226  -0.0188
  0.8710  -0.0026
  0.9193   0.0077
  0.9677   0.0104
  1.0161   0.0049
  1.0645  -0.0131
  1.1129  -0.0464
  1.1613  -0.0903
  1.2097  -0.1495
  1.2580  -0.2225
  1.3064  -0.3080
  1.3548  -0.4157
  1.4032  -0.5489
  1.4516  -0.7065
  1.5000  -0.8745
  1.5484  -1.0734
  1.5967  -1.3638
  1.6451  -1.7956
/
 }\relax
\endpicture
}   % end of time trace data for f1:  $\beta=-1$, $T=1$ noise = 0.1%
\xfigdim=0.40\xfiglen
\yfigdim=0.37\yfiglen
\setbox\figuretwo=\vbox{\hsize=\xfiglen
\beginpicture
\footnotesize
  \setcoordinatesystem units <\xfigdim,\yfigdim>  point at 0 -0.70
  \setplotarea x from 0.0 to 2.4, y from -0.7 to 2
  \axis bottom shiftedto y=-0.7 ticks short numbered from 0 to 2 by 0.5 /
  \axis left ticks short numbered from -0.5 to 2 by 0.5 /
 \put {{\sevenrm $f_2(u)$}} [lb] at 0 2
 \put {{\sevenrm $u$}} [rt] at 2.4 -0.74
\setquadratic
\setdashes <3pt>
%\Black{
\plot
        0    0.0135
    0.03      0.02
    0.0736    0.0268
    0.1472    0.0505
    0.2208    0.0912
    0.2944    0.1572
    0.3680    0.2579
    0.4416    0.4011
    0.5152    0.5909
    0.5888    0.8240
    0.6624    1.0872
    0.7360    1.3572
    0.8096    1.6028
    0.8832    1.7901
    0.9568    1.8899
    1.0304    1.8846
    1.1040    1.7729
    1.1776    1.5697
    1.2512    1.3025
    1.3247    1.0049
    1.3983    0.7091
    1.4719    0.4401
    1.5455    0.2127
    1.6191    0.0320
    1.6927   -0.1050
    1.7663   -0.2059
    1.8399   -0.2798
    1.9135   -0.3353
    1.9871   -0.3796
    2.0607   -0.4174
    2.1343   -0.4523
    2.2079   -0.4861
    2.2815   -0.5200
    2.3551   -0.5544
    2.4287   -0.5898
%   2.5023   -0.6261
/
%}
%
\setquadratic
\setsolid
\Goldenrod{\relax 
\plot
  0.0000   0.0177
    0.03      0.02
  0.0736   0.0240
  0.1472   0.0423
  0.2208   0.0732
  0.2944   0.1242
  0.3680   0.2044
  0.4416   0.3220
  0.5152   0.4778
  0.5888   0.6620
  0.6624   0.8575
  0.7360   1.0445
  0.8096   1.1999
  0.8832   1.3079
  0.9568   1.3586
  1.0304   1.3459
  1.1040   1.2697
  1.1776   1.1333
  1.2512   0.9515
  1.3248   0.7417
  1.3984   0.5264
  1.4720   0.3229
  1.5456   0.1492
  1.6192   0.0070
  1.6928  -0.1046
  1.7663  -0.1860
  1.8399  -0.2459
  1.9135  -0.3028
  1.9871  -0.3537
  2.0607  -0.4017
  2.1343  -0.4285
  2.2079  -0.4444
  2.2815  -0.4691
  2.3551  -0.5032
  2.4287  -0.6
% 2.5023  -0.65
/
 }\relax
\Orange{\relax 
\plot
  0.0000   0.0176
    0.03      0.02
  0.0736   0.0258
  0.1472   0.0480
  0.2208   0.0853
  0.2944   0.1470
  0.3680   0.2445
  0.4416   0.3885
  0.5152   0.5803
  0.5888   0.8084
  0.6624   1.0522
  0.7360   1.2874
  0.8096   1.4862
  0.8832   1.6296
  0.9568   1.7054
  1.0304   1.7052
  1.1040   1.6279
  1.1776   1.4756
  1.2512   1.2631
  1.3248   1.0094
  1.3984   0.7407
  1.4720   0.4788
  1.5456   0.2502
  1.6192   0.0610
  1.6928  -0.0863
  1.7663  -0.1923
  1.8399  -0.2665
  1.9135  -0.3331
  1.9871  -0.3935
  2.0607  -0.4518
  2.1343  -0.4857
  2.2079  -0.5032
  2.2815  -0.5284
  2.3551  -0.5680
  2.4287  -0.61
/
 }\relax
\Red{\relax 
\plot
  0.0000   0.0176
    0.03      0.02
  0.0736   0.0260
  0.1472   0.0487
  0.2208   0.0870
  0.2944   0.1508
  0.3680   0.2522
  0.4416   0.4027
  0.5152   0.6041
  0.5888   0.8444
  0.6624   1.1018
  0.7360   1.3504
  0.8096   1.5604
  0.8832   1.7118
  0.9568   1.7916
  1.0304   1.7907
  1.1040   1.7077
  1.1776   1.5450
  1.2512   1.3183
  1.3248   1.0484
  1.3984   0.7640
  1.4720   0.4886
  1.5456   0.2503
  1.6192   0.0555
  1.6928  -0.0935
  1.7663  -0.1978
  1.8399  -0.2691
  1.9135  -0.3339
  1.9871  -0.3931
  2.0607  -0.4498
  2.1343  -0.4797
  2.2079  -0.4930
  2.2815  -0.5152
  2.3551  -0.5575
  2.4287  -0.58
/
 }\relax
\LimeGreen{\relax 
\plot
 0.0000   0.0176
    0.03      0.02
  0.0736   0.0260
  0.1472   0.0488
  0.2208   0.0873
  0.2944   0.1514
  0.3680   0.2537
  0.4416   0.4057
  0.5152   0.6097
  0.5888   0.8533
  0.6624   1.1146
  0.7360   1.3669
  0.8096   1.5798
  0.8832   1.7328
  0.9568   1.8126
  1.0304   1.8100
  1.1040   1.7237
  1.1776   1.5562
  1.2512   1.3239
  1.3248   1.0484
  1.3984   0.7594
  1.4720   0.4812
  1.5456   0.2422
  1.6192   0.0486
  1.6928  -0.0978
  1.7663  -0.1985
  1.8399  -0.2666
  1.9135  -0.3290
  1.9871  -0.3864
  2.0607  -0.4406
  2.1343  -0.4670
  2.2079  -0.4768
  2.2815  -0.4967
  2.3551  -0.5419
  2.4287  -0.6
/
 }\relax
\PineGreen{\relax 
\plot
  0.0000   0.0176
    0.03      0.02
  0.0736   0.0260
  0.1472   0.0488
  0.2208   0.0873
  0.2944   0.1515
  0.3680   0.2539
  0.4416   0.4064
  0.5152   0.6110
  0.5888   0.8556
  0.6624   1.1178
  0.7360   1.3711
  0.8096   1.5847
  0.8832   1.7381
  0.9568   1.8177
  1.0304   1.8142
  1.1040   1.7266
  1.1776   1.5573
  1.2512   1.3231
  1.3248   1.0458
  1.3984   0.7556
  1.4720   0.4770
  1.5456   0.2384
  1.6192   0.0459
  1.6928  -0.0989
  1.7663  -0.1976
  1.8399  -0.2640
  1.9135  -0.3250
  1.9871  -0.3813
  2.0607  -0.4339
  2.1343  -0.4583
  2.2079  -0.4661
  2.2815  -0.4849
  2.3551  -0.5326
  2.4287  -0.62
/
 }\relax
\endpicture
}   % end of time trace data for f2:  $\beta=-1$, $T=1$
\xfigdim=0.571\xfiglen
\yfigdim=0.625\yfiglen
\setbox\figurethree=\vbox{\hsize=\xfiglen
% This is for: time trace data, beta=1, T=1, f_1(u)   1% noise
\beginpicture
\footnotesize
  \setcoordinatesystem units <\xfigdim,\yfigdim>  point at 0 -1.6
  \setplotarea x from 0 to 1.75, y from -1.6 to 0
  \axis bottom shiftedto y=-1.6 ticks short numbered from 0 to 1.5 by 0.5 /
% unlabeled short quantity 8 / /
  \axis left ticks short numbered from -1.5 to 0 by 0.5 /
% unlabeled short quantity 16 / /
 \put {{\sevenrm $f_1(u)$}} [lb] at 0.02 0
 \put {{\sevenrm $u$}} [rt] at 1.75 -1.54
\setquadratic
\footnotesize
\setquadratic
\setdashes <3pt>
\Black{
\plot
         0         0
    0.0484   -0.0784
    0.0968   -0.1404
    0.1452   -0.1873
    0.1935   -0.2205
    0.2419   -0.2414
    0.2903   -0.2512
    0.3387   -0.2514
    0.3871   -0.2434
    0.4355   -0.2284
    0.4838   -0.2079
    0.5322   -0.1831
    0.5806   -0.1555
    0.6290   -0.1265
    0.6774   -0.0973
    0.7258   -0.0694
    0.7742   -0.0440
    0.8225   -0.0226
    0.8709   -0.0065
    0.9193    0.0029
    0.9677    0.0042
    1.0161   -0.0038
    1.0645   -0.0226
    1.1129   -0.0535
    1.1612   -0.0978
    1.2096   -0.1570
    1.2580   -0.2324
    1.3064   -0.3253
    1.3548   -0.4372
    1.4032   -0.5693
    1.4515   -0.7230
    1.4999   -0.8997
    1.5483   -1.1008 
    1.5967   -1.3276
    1.6451   -1.5814
/
}
\Goldenrod{\relax 
\setsolid
\plot
  0.0000   0.0338
  0.0484  -0.0835
  0.0968  -0.1259
  0.1452  -0.1580
  0.1937  -0.1800
  0.2421  -0.1935
  0.2905  -0.1979
  0.3389  -0.1955
  0.3873  -0.1847
  0.4357  -0.1678
  0.4842  -0.1448
  0.5326  -0.1130
  0.5810  -0.0779
  0.6294  -0.0436
  0.6778  -0.0087
  0.7262   0.0208
  0.7747   0.0478
  0.8231   0.0679
  0.8715   0.0843
  0.9199   0.0976
  0.9683   0.0997
  1.0167   0.1013
  1.0652   0.0996
  1.1136   0.0663
  1.1620   0.0468
  1.2104   0.0221
  1.2588  -0.0234
  1.3072  -0.0321
  1.3557  -0.0751
  1.4041  -0.1228
  1.4525  -0.2202
  1.5009  -0.3338
  1.5493  -0.5470
  1.5977  -0.9617
  1.6462  -1.4
/
 }\relax
\Orange{\relax 
\plot
  0.0000   0.0340
  0.0484  -0.0900
  0.0968  -0.1394
  0.1452  -0.1797
  0.1937  -0.2097
  0.2421  -0.2304
  0.2905  -0.2407
  0.3389  -0.2428
  0.3873  -0.2346
  0.4357  -0.2188
  0.4842  -0.1950
  0.5326  -0.1609
  0.5810  -0.1229
  0.6294  -0.0858
  0.6778  -0.0483
  0.7262  -0.0172
  0.7747   0.0105
  0.8231   0.0297
  0.8715   0.0437
  0.9199   0.0529
  0.9683   0.0492
  1.0167   0.0422
  1.0652   0.0297
  1.1136  -0.0180
  1.1620  -0.0500
  1.2104  -0.0893
  1.2588  -0.1487
  1.3072  -0.1712
  1.3557  -0.2356
  1.4041  -0.3154
  1.4525  -0.4749
  1.5009  -0.7031
  1.5493  -1.1187
  1.5977  -1.4
  1.6462  -1.6
/
 }\relax
\Red{\relax 
\plot
  0.0000   0.0340
  0.0484  -0.0905
  0.0968  -0.1410
  0.1452  -0.1829
  0.1937  -0.2147
  0.2421  -0.2375
  0.2905  -0.2497
  0.3389  -0.2537
  0.3873  -0.2473
  0.4357  -0.2330
  0.4842  -0.2106
  0.5326  -0.1776
  0.5810  -0.1406
  0.6294  -0.1044
  0.6778  -0.0680
  0.7262  -0.0380
  0.7747  -0.0120
  0.8231   0.0054
  0.8715   0.0173
  0.9199   0.0237
  0.9683   0.0170
  1.0167   0.0063
  1.0652  -0.0099
  1.1136  -0.0641
  1.1620  -0.1017
  1.2104  -0.1473
  1.2588  -0.2139
  1.3072  -0.2400
  1.3557  -0.3094
  1.4041  -0.4003
  1.4525  -0.5850
  1.5009  -0.8344
  1.5493  -1.2562
  1.5977  -1.45
  1.6462  -1.64
/
 }\relax
\LimeGreen{\relax 
\plot
  0.0000   0.0340
  0.0484  -0.0905
  0.0968  -0.1412
  0.1452  -0.1834
  0.1937  -0.2156
  0.2421  -0.2389
  0.2905  -0.2515
  0.3389  -0.2562
  0.3873  -0.2503
  0.4357  -0.2367
  0.4842  -0.2150
  0.5326  -0.1828
  0.5810  -0.1465
  0.6294  -0.1109
  0.6778  -0.0753
  0.7262  -0.0461
  0.7747  -0.0210
  0.8231  -0.0045
  0.8715   0.0065
  0.9199   0.0118
  0.9683   0.0041
  1.0167  -0.0077
  1.0652  -0.0250
  1.1136  -0.0815
  1.1620  -0.1216
  1.2104  -0.1699
  1.2588  -0.2394
  1.3072  -0.2659
  1.3557  -0.3356
  1.4041  -0.4280
  1.4525  -0.6137
  1.5009  -0.8523
  1.5493  -1.2544
  1.61    -1.46
  1.6462  -1.7
/
 }\relax
\PineGreen{\relax 
\plot
  0.0000   0.0340
  0.0484  -0.0905
  0.0968  -0.1412
  0.1452  -0.1834
  0.1937  -0.2157
  0.2421  -0.2391
  0.2905  -0.2519
  0.3389  -0.2567
  0.3873  -0.2511
  0.4357  -0.2377
  0.4842  -0.2162
  0.5326  -0.1843
  0.5810  -0.1483
  0.6294  -0.1131
  0.6778  -0.0777
  0.7262  -0.0489
  0.7747  -0.0241
  0.8231  -0.0080
  0.8715   0.0026
  0.9199   0.0076
  0.9683  -0.0003
  1.0167  -0.0125
  1.0652  -0.0301
  1.1136  -0.0874
  1.1620  -0.1284
  1.2104  -0.1777
  1.2588  -0.2483
  1.3072  -0.2750
  1.3557  -0.3444
  1.4041  -0.4358
  1.4525  -0.6177
  1.5009  -0.8462
  1.5493  -1.2355
  1.61    -1.6
  1.6462  -1.8
/
 }\relax
\endpicture
}   % end of time trace data for f1:  $\beta=0.3$, $T=1$
\xfigdim=0.40\xfiglen
\yfigdim=0.37\yfiglen
\setbox\figurefour=\vbox{\hsize=\xfiglen
\beginpicture
\footnotesize
  \setcoordinatesystem units <\xfigdim,\yfigdim>  point at 0 -0.70
  \setplotarea x from 0.0 to 2.4, y from -0.7 to 2
  \axis bottom shiftedto y=-0.7 ticks short numbered from 0 to 2 by 0.5 /
  \axis left ticks short numbered from -0.5 to 2 by 0.5 /
 \put {{\sevenrm $f_2(u)$}} [lb] at 0 2
 \put {{\sevenrm $u$}} [rt] at 2.4 -0.74
\setquadratic
\setlinear
\setdashes <3pt>
\plot
        0    0.0135
    0.0736    0.0268
    0.1472    0.0505
    0.2208    0.0912
    0.2944    0.1572
    0.3680    0.2579
    0.4416    0.4011
    0.5152    0.5909
    0.5888    0.8240
    0.6624    1.0872
    0.7360    1.3572
    0.8096    1.6028
    0.8832    1.7901
    0.9568    1.8899
    1.0304    1.8846
    1.1040    1.7729
    1.1776    1.5697
    1.2512    1.3025
    1.3247    1.0049
    1.3983    0.7091
    1.4719    0.4401
    1.5455    0.2127
    1.6191    0.0320
    1.6927   -0.1050
    1.7663   -0.2059
    1.8399   -0.2798
    1.9135   -0.3353
    1.9871   -0.3796
    2.0607   -0.4174
    2.1343   -0.4523
    2.2079   -0.4861
    2.2815   -0.5200
    2.3551   -0.5544
    2.4287   -0.5898 /
%    2.5023   -0.6261
%
\setlinear
\setsolid
\Goldenrod{\relax 
\plot
  0.0000   0.0512
  0.0736  -0.0039
  0.1472   0.0200
  0.2208   0.0751
  0.2944   0.1634
  0.3680   0.2815
  0.4416   0.4266
  0.5152   0.5828
  0.5888   0.7364
  0.6624   0.8763
  0.7360   0.9965
  0.8096   1.0809
  0.8833   1.1352
  0.9569   1.1573
  1.0305   1.1516
  1.1041   1.1122
  1.1777   1.0252
  1.2513   0.9102
  1.3249   0.7768
  1.3985   0.6129
  1.4721   0.4204
  1.5457   0.2572
  1.6193   0.0892
  1.6929  -0.0446
  1.7665  -0.1676
  1.8401  -0.2153
  1.9137  -0.2842
  1.9873  -0.3071
  2.0609  -0.3320
  2.1345  -0.3514
  2.2081  -0.3955
  2.2817  -0.53
  2.3553  -0.65
/
 }\relax
\setlinear
\Orange{\relax 
\plot
  0.0000   0.0512
  0.0736  -0.0040
  0.1472   0.0248
  0.2208   0.0894
  0.2944   0.1941
  0.3680   0.3364
  0.4416   0.5138
  0.5152   0.7072
  0.5888   0.8988
  0.6624   1.0757
  0.7360   1.2302
  0.8096   1.3412
  0.8833   1.4171
  0.9569   1.4567
  1.0305   1.4605
  1.1041   1.4241
  1.1777   1.3319
  1.2513   1.2032
  1.3249   1.0477
  1.3985   0.8523
  1.4721   0.6143
  1.5457   0.4056
  1.6193   0.1886
  1.6929   0.0105
  1.7665  -0.1539
  1.8401  -0.2234
  1.9137  -0.3049
  1.9873  -0.3339
  2.0609  -0.3592
  2.1345  -0.3746
  2.2081  -0.4223
  2.2817  -0.55
  2.3553  -0.7
/
 }\relax
\Red{\relax 
\plot
  0.0000   0.0512
  0.0736  -0.0040
  0.1472   0.0253
  0.2208   0.0914
  0.2944   0.1992
  0.3680   0.3469
  0.4416   0.5323
  0.5152   0.7356
  0.5888   0.9377
  0.6624   1.1247
  0.7360   1.2885
  0.8096   1.4062
  0.8833   1.4871
  0.9569   1.5301
  1.0305   1.5351
  1.1041   1.4978
  1.1777   1.4016
  1.2513   1.2662
  1.3249   1.1016
  1.3985   0.8948
  1.4721   0.6432
  1.5457   0.4234
  1.6193   0.1961
  1.6929   0.0103
  1.7665  -0.1570
  1.8401  -0.2226
  1.9137  -0.3004
  1.9873  -0.3236
  2.0609  -0.3411
  2.1345  -0.3520
  2.2081  -0.4082
  2.2817  -0.5151
  2.3553  -0.6
/
 }\relax
\LimeGreen{\relax 
\plot
  0.0000   0.0512
  0.0736  -0.0040
  0.1472   0.0254
  0.2208   0.0917
  0.2944   0.2001
  0.3680   0.3490
  0.4416   0.5364
  0.5152   0.7422
  0.5888   0.9471
  0.6624   1.1369
  0.7360   1.3031
  0.8096   1.4224
  0.8833   1.5045
  0.9569   1.5480
  1.0305   1.5528
  1.1041   1.5147
  1.1777   1.4164
  1.2513   1.2782
  1.3249   1.1101
  1.3985   0.8990
  1.4721   0.6431
  1.5457   0.4204
  1.6193   0.1915
  1.6929   0.0057
  1.7665  -0.1583
  1.8401  -0.2199
  1.9137  -0.2931
  1.9873  -0.3114
  2.0609  -0.3227
  2.1345  -0.3322
  2.2081  -0.3977
  2.2817  -0.5
  2.3553  -0.65
/
 }\relax
\PineGreen{\relax 
\plot
  0.0000   0.0512
  0.0736  -0.0040
  0.1472   0.0254
  0.2208   0.0918
  0.2944   0.2002
  0.3680   0.3494
  0.4416   0.5372
  0.5152   0.7437
  0.5888   0.9494
  0.6624   1.1399
  0.7360   1.3067
  0.8096   1.4265
  0.8833   1.5088
  0.9569   1.5523
  1.0305   1.5569
  1.1041   1.5184
  1.1777   1.4193
  1.2513   1.2801
  1.3249   1.1108
  1.3985   0.8983
  1.4721   0.6411
  1.5457   0.4177
  1.6193   0.1888
  1.6929   0.0038
  1.7665  -0.1580
  1.8401  -0.2173
  1.9137  -0.2879
  1.9873  -0.3034
  2.0609  -0.3115
  2.1345  -0.3212
  2.2081  -0.3940
  2.2817  -0.55
  2.3553  -0.7
/
 }\relax
\endpicture
}   % end of time trace data for f2:  $\beta=-1$, $T=1$

\begin{figure}[h]
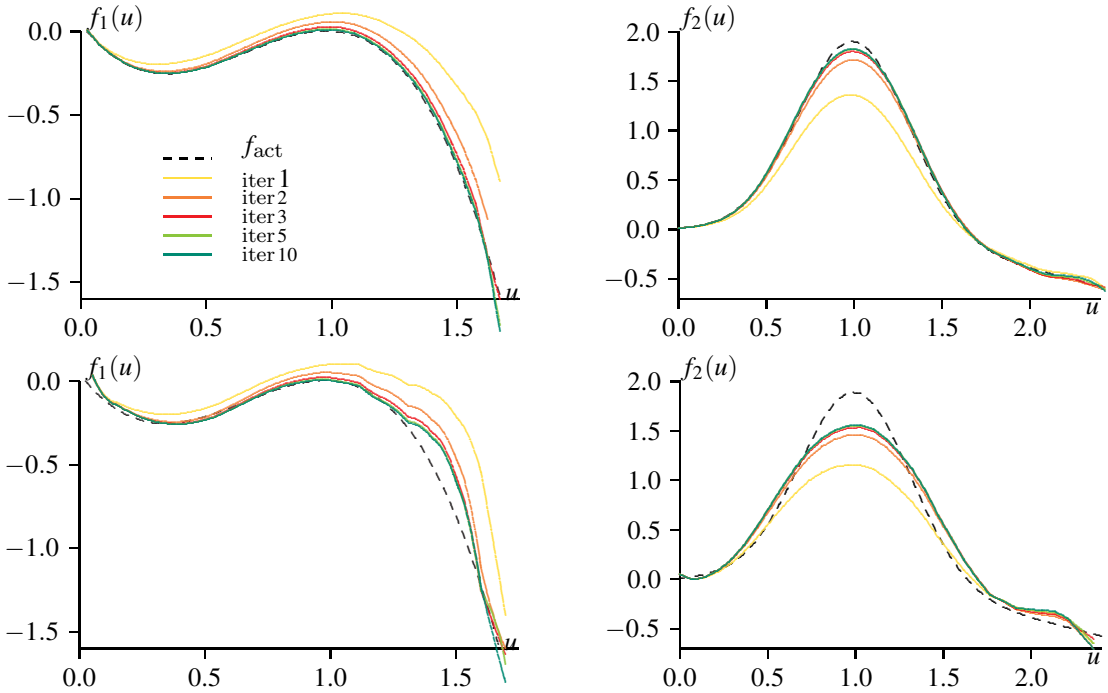

\hbox to\hsize{\hss\copy\figureone\hss\hss\hss\copy\figuretwo\hss}
\medskip
\hbox to\hsize{\hss\copy\figurethree\hss\hss\hss\copy\figurefour\hss}
\medskip
%\hbox to\hsize{\hss\copy\figureeleven\hss\hss\hss\copy\figuretwelve\hss}
\caption{\small Reconstructions of $f_1$ and $f_2$ under noise from time trace data.
Top: 0.1\%,\ Bottom: 1\%.}
\label{fig:titr_reconstructions_f_with_noise}
\end{figure}

\subsection{Reconstructions of the interaction terms $\phi_i$}\label{sec:phi1phi2}

In this section we assume that both $f_1$ and $f_2$ are known,
but $\phi_1(w)$ and $\phi_2(w)$ in 
\begin{equation}\label{phi1phi2}
\begin{aligned}
D_t u - \triangle u &= f_1(u)+\beta^u \phi_1(w(u,v)) +r_u\\
D_t v - \triangle v &= f_2(v)+\beta^v \phi_2(w(u,v)) +r_v 
\end{aligned}
\end{equation}
are unknown and have to be recovered from either a pair of time trace
or a pair of final time data at $t=T$.
The sample values to be reconstructed are
\begin{equation}\label{egn:phi_act_funcs}
\begin{aligned}
\phi_1(w) &= (\mbox{arctan}(w) + 2w.e^{-(w-1).^2}) \\
\phi_2(w) &= \begin{cases}
 0.1\cdot(27 - (3-w)^2(3+2w))&\mbox{ if }w<3\\2.7&\mbox{ else}\end{cases} 
\end{aligned}
\end{equation}
where $\beta_u$ and $\beta_v$ are constants that will be used to test
how large these functions can be and the iteration scheme still converge.
The known functions in the model were set to
\[
\begin{aligned}
&f_1(u) = u(1-u)\,, \quad
&&f_2(v) = v(2-v)\\ 
&r_u(x,t) = 10\sin(\tfrac{\pi}{2}x)\cdot t\,, \quad 
&&r_v(x,t) = 12(2x - x^2)\cdot t
\end{aligned}
\]

We show the reconstructions of $\phi_1$ and $\phi_2$
graphically by again displaying four pairs of
reconstruction iterates, for each of the values
$\beta=-1,\;0.1,\;1,\;10$, in Figure~\oldref{fig:titr_reconstructions_phi}.
Of course different values of $\beta$ give different solution with differing
ranges and this is clearly shown.
The exact value is the dotted curve, the curve in yellow is the first
iterate, the one in orange is the second, and the one in red the third or fifth.
This shows the rapid convergence for a wide range of $\beta$ values.
\xfiglen=2.3 true in  
\yfiglen=1.4 true in
%  \PiCTeX file for BB5   Final time recontructions
\input colordvi

\input pictex
\font\smallsymbol = cmmi8
%\newdimen\xfiglen \newdimen\yfiglen
\newdimen\xfigdim \newdimen\yfigdim
\newdimen\xmindim \newdimen\ymindim
\newdimen\xmaxdim \newdimen\ymaxdim
\newbox\figurelegendten
\newbox\figurethirteen
\newbox\figurefourteen
\newbox\figurefifteen
\newbox\figuresixteen
\newbox\figureseventeen
\newbox\figureeighteen
\newbox\figurenineteen
\newbox\figuretwenty
%%%%%%%%%%%%%%%%%%%
%\xfiglen=2.5true in
%\yfiglen=1.6true in
%%%%%%%%%%%%%%%%%%%%
\xfigdim=0.21\xfiglen
\yfigdim=0.29\yfiglen
\setbox\figurethirteen=\vbox{\hsize=\xfiglen
% This is for: final time data for phi1:  $\betau=\betav-1$, $T=1$
\beginpicture
\footnotesize
  \setcoordinatesystem units <\xfigdim,\yfigdim>  point at 0 0
  \setplotarea x from 0 to 4.75, y from 0 to 3.7
  \axis bottom shiftedto y=0 ticks short numbered from 0 to 4 by 1 /
% unlabeled short quantity 8 / /
  \axis left ticks short numbered from 0 to 3 by 1 /
% unlabeled short quantity 16 / /
 \put {{\sevenrm $\phi_1(w)$}} [l] at 0.04 3.7
 \put {{\sevenrm $w$}} [rb] at 4.75 0.05
 \put {$\beta=-1$} [rt] at 4.5 3.5
%\setquadratic
\setdashes <3pt>
\Black{
\plot
  0.0000   0.0000
  0.1362   0.2644
  0.2723   0.5866
  0.4085   0.9635
  0.5446   1.3839
  0.6808   1.8273
  0.8169   2.2649
  0.9531   2.6633
  1.0892   2.9892
  1.2254   3.2157
  1.3615   3.3267
  1.4977   3.3203
  1.6338   3.2082
  1.7700   3.0132
  1.9061   2.7649
  2.0423   2.4938
  2.1784   2.2271
  2.3146   1.9852
  2.4507   1.7809
  2.5869   1.6190
  2.7230   1.4986
  2.8592   1.4147
  2.9953   1.3604
  3.1315   1.3283
  3.2676   1.3120
  3.4038   1.3061
  3.5399   1.3067
  3.6761   1.3109
  3.8122   1.3171
  3.9484   1.3241
  4.0845   1.3313
  4.2207   1.3384
  4.3568   1.3453
  4.4930   1.3518
  4.6291   1.3581
/
 }\relax
\setsolid
\Goldenrod{\relax 
\plot
  0.0000  -0.0142
  0.1362   0.3345
  0.2723   0.6880
  0.4085   1.0956
  0.5446   1.5451
  0.6808   2.0164
  0.8169   2.4830
  0.9531   2.9123
  1.0892   3.2708
  1.2254   3.5318
  1.3615   3.6798
  1.4977   3.7120
  1.6338   3.6390
  1.7700   3.4820
  1.9061   3.2689
  2.0423   3.0290
  2.1784   2.7881
  2.3146   2.5658
  2.4507   2.3744
  2.5869   2.2200
  2.7230   2.1036
  2.8592   2.0230
  2.9953   1.9730
  3.1315   1.9465
  3.2676   1.9356
  3.4038   1.9333
  3.5399   1.9349
  3.6761   1.9391
  3.8122   1.9466
  3.9484   1.9582
  4.0845   1.9711
  4.2207   1.9801
  4.3568   1.9842
  4.4930   1.9944
  4.6291   1.9958
/
 }\relax
\Orange{\relax 
\plot
  0.0000  -0.0142
  0.1362   0.2989
  0.2723   0.6339
  0.4085   1.0236
  0.5446   1.4538
  0.6808   1.9036
  0.8169   2.3461
  0.9531   2.7482
  1.0892   3.0769
  1.2254   3.3057
  1.3615   3.4198
  1.4977   3.4176
  1.6338   3.3104
  1.7700   3.1205
  1.9061   2.8766
  2.0423   2.6087
  2.1784   2.3431
  2.3146   2.0995
  2.4507   1.8901
  2.5869   1.7207
  2.7230   1.5923
  2.8592   1.5020
  2.9953   1.4443
  3.1315   1.4115
  3.2676   1.3954
  3.4038   1.3886
  3.5399   1.3862
  3.6761   1.3867
  3.8122   1.3910
  3.9484   1.3994
  4.0845   1.4094
  4.2207   1.4151
  4.3568   1.4156
  4.4930   1.4221
  4.6291   1.4206
/
 }\relax
\Red{\relax 
\plot
  0.0000  -0.0142
  0.1362   0.2662
  0.2723   0.5875
  0.4085   0.9666
  0.5446   1.3879
  0.6808   1.8298
  0.8169   2.2652
  0.9531   2.6609
  1.0892   2.9838
  1.2254   3.2076
  1.3615   3.3173
  1.4977   3.3115
  1.6338   3.2015
  1.7700   3.0097
  1.9061   2.7646
  2.0423   2.4962
  2.1784   2.2306
  2.3146   1.9874
  2.4507   1.7787
  2.5869   1.6101
  2.7230   1.4825
  2.8592   1.3930
  2.9953   1.3359
  3.1315   1.3037
  3.2676   1.2881
  3.4038   1.2816
  3.5399   1.2795
  3.6761   1.2803
  3.8122   1.2848
  3.9484   1.2935
  4.0845   1.3036
  4.2207   1.3095
  4.3568   1.3106
  4.4930   1.3188
  4.6291   1.3195
/
 }\relax
\LimeGreen{\relax 
\plot
  0.0000  -0.0142
  0.1362   0.2623
  0.2723   0.5823
  0.4085   0.9604
  0.5446   1.3810
  0.6808   1.8224
  0.8169   2.2574
  0.9531   2.6529
  1.0892   2.9757
  1.2254   3.1995
  1.3615   3.3097
  1.4977   3.3046
  1.6338   3.1956
  1.7700   3.0051
  1.9061   2.7616
  2.0423   2.4948
  2.1784   2.2310
  2.3146   1.9895
  2.4507   1.7824
  2.5869   1.6154
  2.7230   1.4891
  2.8592   1.4007
  2.9953   1.3446
  3.1315   1.3131
  3.2676   1.2980
  3.4038   1.2920
  3.5399   1.2904
  3.6761   1.2917
  3.8122   1.2966
  3.9484   1.3058
  4.0845   1.3163
  4.2207   1.3230
  4.3568   1.3253
  4.4930   1.3348
  4.6291   1.3334
/
 }\relax
%
%\PineGreen{\relax 
%\plot
%/
% }\relax
%
%\Blue{\relax 
%\plot
%/
% }\relax
%
\endpicture
}   % end of final time data for phi1:  $\betau=\betav-1$, $T=1$
\xfigdim=0.21\xfiglen
\yfigdim=0.34\yfiglen
\setbox\figurefourteen=\vbox{\hsize=\xfiglen
% This is for: final time data for phi2:  $\betau=\betav-1$, $T=1$
\beginpicture
\footnotesize
  \setcoordinatesystem units <\xfigdim,\yfigdim>  point at 0 0
  \setplotarea x from 0 to 4.75, y from 0 to 3
  \axis bottom shiftedto y=0 ticks short numbered from 0 to 4 by 1 /
% unlabeled short quantity 8 / /
  \axis left ticks short numbered from 0 to 3 by 1 /
% unlabeled short quantity 16 / /
 \put {{\sevenrm $\phi_2(w)$}} [l] at 0.04 3
 \put {{\sevenrm $w$}} [rb] at 4.75 0.05
%\setquadratic
\setdashes <3pt>
\Black{
\plot
  0.0000   0.0000
  0.1362   0.0162
  0.2723   0.0627
  0.4085   0.1365
  0.5446   0.2346
  0.6808   0.3540
  0.8169   0.4916
  0.9531   0.6443
  1.0892   0.8093
  1.2254   0.9834
  1.3615   1.1636
  1.4977   1.3468
  1.6338   1.5302
  1.7700   1.7105
  1.9061   1.8849
  2.0423   2.0502
  2.1784   2.2034
  2.3146   2.3416
  2.4507   2.4616
  2.5869   2.5605
  2.7230   2.6352
  2.8592   2.6827
  2.9953   2.7000
  3.1315   2.7000
  3.2676   2.7000
  3.4038   2.7000
  3.5399   2.7000
  3.6761   2.7000
  3.8122   2.7000
  3.9484   2.7000
  4.0845   2.7000
  4.2207   2.7000
  4.3568   2.7000
  4.4930   2.7000
  4.6291   2.7000
/
 }\relax
\setsolid
\Goldenrod{\relax 
\plot
  0.0000  -0.0175
  0.1362  -0.1335
  0.2723  -0.1538
  0.4085  -0.1249
  0.5446  -0.0649
  0.6808   0.0156
  0.8169   0.1172
  0.9531   0.2411
  1.0892   0.3837
  1.2254   0.5388
  1.3615   0.7003
  1.4977   0.8647
  1.6338   1.0304
  1.7700   1.1966
  1.9061   1.3615
  2.0423   1.5221
  2.1784   1.6741
  2.3146   1.8123
  2.4507   1.9320
  2.5869   2.0292
  2.7230   2.1020
  2.8592   2.1510
  2.9953   2.1800
  3.1315   2.1946
  3.2676   2.2011
  3.4038   2.2041
  3.5399   2.2062
  3.6761   2.2083
  3.8122   2.2122
  3.9484   2.2202
  4.0845   2.2304
  4.2207   2.2360
  4.3568   2.2351
  4.4930   2.2433
  4.6291   2.2436
/
 }\relax
\Orange{\relax 
\plot
  0.0000  -0.0175
  0.1362   0.0101
  0.2723   0.0506
  0.4085   0.1268
  0.5446   0.2271
  0.6808   0.3431
  0.8169   0.4766
  0.9531   0.6294
  1.0892   0.7982
  1.2254   0.9769
  1.3615   1.1596
  1.4977   1.3428
  1.6338   1.5251
  1.7700   1.7057
  1.9061   1.8828
  2.0423   2.0534
  2.1784   2.2132
  2.3146   2.3573
  2.4507   2.4806
  2.5869   2.5797
  2.7230   2.6527
  2.8592   2.7006
  2.9953   2.7273
  3.1315   2.7390
  3.2676   2.7420
  3.4038   2.7415
  3.5399   2.7398
  3.6761   2.7382
  3.8122   2.7385
  3.9484   2.7427
  4.0845   2.7492
  4.2207   2.7506
  4.3568   2.7449
  4.4930   2.7481
  4.6291   2.7444
/
 }\relax
\Red{\relax 
\plot
  0.0000  -0.0175
  0.1362   0.0222
  0.2723   0.0672
  0.4085   0.1463
  0.5446   0.2486
  0.6808   0.3661
  0.8169   0.5005
  0.9531   0.6537
  1.0892   0.8226
  1.2254   1.0009
  1.3615   1.1829
  1.4977   1.3652
  1.6338   1.5462
  1.7700   1.7251
  1.9061   1.9004
  2.0423   2.0691
  2.1784   2.2267
  2.3146   2.3683
  2.4507   2.4893
  2.5869   2.5859
  2.7230   2.6565
  2.8592   2.7021
  2.9953   2.7266
  3.1315   2.7362
  3.2676   2.7373
  3.4038   2.7349
  3.5399   2.7314
  3.6761   2.7279
  3.8122   2.7264
  3.9484   2.7288
  4.0845   2.7333
  4.2207   2.7326
  4.3568   2.7255
  4.4930   2.7286
  4.6291   2.7253
/
 }\relax
\LimeGreen{\relax 
\plot
  0.0000  -0.0175
  0.1362   0.0185
  0.2723   0.0618
  0.4085   0.1396
  0.5446   0.2406
  0.6808   0.3569
  0.8169   0.4901
  0.9531   0.6420
  1.0892   0.8096
  1.2254   0.9867
  1.3615   1.1674
  1.4977   1.3484
  1.6338   1.5282
  1.7700   1.7058
  1.9061   1.8798
  2.0423   2.0472
  2.1784   2.2035
  2.3146   2.3440
  2.4507   2.4638
  2.5869   2.5594
  2.7230   2.6291
  2.8592   2.6739
  2.9953   2.6979
  3.1315   2.7070
  3.2676   2.7078
  3.4038   2.7052
  3.5399   2.7015
  3.6761   2.6979
  3.8122   2.6962
  3.9484   2.6986
  4.0845   2.7030
  4.2207   2.7026
  4.3568   2.6964
  4.4930   2.7004
  4.6291   2.6934
/
 }\relax
%
%\PineGreen{\relax 
%\plot
%/
% }\relax
%
%\Blue{\relax 
%\plot
%/
% }\relax
%
\endpicture
}   % end of final time data for phi2:  $\betau=\betav-1$, $T=1$
\xfigdim=0.14\xfiglen
\yfigdim=0.27\yfiglen
\setbox\figurefifteen=\vbox{\hsize=\xfiglen
% This is for: final time data for phi1:  $\betau=\betav=0.1$, $T=1$
\beginpicture
\footnotesize
  \setcoordinatesystem units <\xfigdim,\yfigdim>  point at 0 0
  \setplotarea x from 0 to 7.2, y from 0 to 3.7
  \axis bottom shiftedto y=0 ticks short numbered from 0 to 7 by 1 /
% unlabeled short quantity 8 / /
  \axis left ticks short numbered from 0 to 3 by 1 /
% unlabeled short quantity 16 / /
 \put {{\sevenrm $\phi_1(w)$}} [l] at 0.04 3.7
 \put {{\sevenrm $w$}} [rb] at 7.1  0.05
 \put {$\beta=0.1$} [rt] at 7 3.5
%\setquadratic
\setdashes <3pt>
\Black{
\plot
  0.0000   0.0000
  0.2113   0.4350
  0.4225   1.0052
  0.6338   1.6733
  0.8450   2.3516
  1.0563   2.9187
  1.2675   3.2628
  1.4788   3.3279
  1.6901   3.1361
  1.9013   2.7742
  2.1126   2.3540
  2.3238   1.9700
  2.5351   1.6755
  2.7463   1.4818
  2.9576   1.3729
  3.1689   1.3225
  3.3801   1.3066
  3.5914   1.3079
  3.8026   1.3166
  4.0139   1.3275
  4.2252   1.3387
  4.4364   1.3492
  4.6477   1.3589
  4.8589   1.3678
  5.0702   1.3761
  5.2814   1.3837
  5.4927   1.3907
  5.7040   1.3972
  5.9152   1.4033
  6.1265   1.4090
  6.3377   1.4143
  6.5490   1.4193
  6.7602   1.4239
  6.9715   1.4283
  7.1828   1.4325
/
 }\relax
\setsolid
\Goldenrod{\relax 
\plot
  0.0000   0.0577
  0.2113   0.4899
  0.4225   1.1186
  0.6338   1.8067
  0.8450   2.4815
  1.0563   3.0589
  1.2675   3.4351
  1.4788   3.5417
  1.6901   3.3872
  1.9013   3.0515
  2.1126   2.6479
  2.3238   2.2783
  2.5351   2.0021
  2.7463   1.8305
  2.9576   1.7396
  3.1689   1.6938
  3.3801   1.6665
  3.5914   1.6490
  3.8026   1.6465
  4.0139   1.6662
  4.2252   1.7061
  4.4364   1.7514
  4.6477   1.7821
  4.8589   1.7864
  5.0702   1.7706
  5.2814   1.7579
  5.4927   1.7719
  5.7040   1.8150
  5.9152   1.8588
  6.1265   1.8651
  6.3377   1.8328
  6.5490   1.8235
  6.7602   1.8850
  6.9715   1.8969
  7.1828   1.9550
/
 }\relax
\Orange{\relax 
\plot
  0.0000   0.0577
  0.2113   0.4342
  0.4225   1.0372
  0.6338   1.7023
  0.8450   2.3541
  1.0563   2.9076
  1.2675   3.2593
  1.4788   3.3417
  1.6901   3.1644
  1.9013   2.8086
  2.1126   2.3884
  2.3238   2.0055
  2.5351   1.7191
  2.7463   1.5393
  2.9576   1.4417
  3.1689   1.3901
  3.3801   1.3578
  3.5914   1.3357
  3.8026   1.3291
  4.0139   1.3450
  4.2252   1.3809
  4.4364   1.4220
  4.6477   1.4483
  4.8589   1.4481
  5.0702   1.4283
  5.2814   1.4122
  5.4927   1.4231
  5.7040   1.4625
  5.9152   1.5015
  6.1265   1.5023
  6.3377   1.4651
  6.5490   1.4523
  6.7602   1.5082
  6.9715   1.5111
  7.1828   1.6306
/
 }\relax
\Red{\relax 
\plot
  0.0000   0.0577
  0.2113   0.4265
  0.4225   1.0264
  0.6338   1.6892
  0.8450   2.3391
  1.0563   2.8912
  1.2675   3.2416
  1.4788   3.3230
  1.6901   3.1449
  1.9013   2.7884
  2.1126   2.3676
  2.3238   1.9843
  2.5351   1.6974
  2.7463   1.5174
  2.9576   1.4195
  3.1689   1.3678
  3.3801   1.3353
  3.5914   1.3131
  3.8026   1.3064
  4.0139   1.3221
  4.2252   1.3580
  4.4364   1.3991
  4.6477   1.4254
  4.8589   1.4252
  5.0702   1.4054
  5.2814   1.3893
  5.4927   1.4003
  5.7040   1.4399
  5.9152   1.4791
  6.1265   1.4802
  6.3377   1.4434
  6.5490   1.4312
  6.7602   1.4883
  6.9715   1.4922
  7.1828   1.5862
/
 }\relax
%
%\LimeGreen{\relax 
%\plot
%/
% }\relax
%
%\PineGreen{\relax 
%\plot
%/
% }\relax
%
%\Blue{\relax 
%\plot
%/
% }\relax
%
\endpicture
}   % end of final time data for phi1:  $\betau=\betav=0.1$, $T=1$
\xfigdim=0.14\xfiglen
\yfigdim=0.33\yfiglen
\setbox\figuresixteen=\vbox{\hsize=\xfiglen
% This is for: final time data for phi2:  $\betau=\betav=0.1$, $T=1$
\beginpicture
\footnotesize
  \setcoordinatesystem units <\xfigdim,\yfigdim>  point at 0 0
  \setplotarea x from 0 to 7.2, y from 0 to 3
  \axis bottom shiftedto y=0 ticks short numbered from 0 to 7 by 1 /
% unlabeled short quantity 8 / /
  \axis left ticks short numbered from 0 to 3 by 1 /
% unlabeled short quantity 16 / /
 \put {{\sevenrm $\phi_2(w)$}} [l] at 0.04 3
 \put {{\sevenrm $w$}} [rb] at 7 0.05
%\setquadratic
\setdashes <3pt>
\Black{
\plot
  0.0000   0.0000
  0.2113   0.0383
  0.4225   0.1456
  0.6338   0.3106
  0.8450   0.5220
  1.0563   0.7685
  1.2675   1.0387
  1.4788   1.3214
  1.6901   1.6052
  1.9013   1.8789
  2.1126   2.1310
  2.3238   2.3503
  2.5351   2.5256
  2.7463   2.6454
  2.9576   2.6984
  3.1689   2.7000
  3.3801   2.7000
  3.5914   2.7000
  3.8026   2.7000
  4.0139   2.7000
  4.2252   2.7000
  4.4364   2.7000
  4.6477   2.7000
  4.8589   2.7000
  5.0702   2.7000
  5.2814   2.7000
  5.4927   2.7000
  5.7040   2.7000
  5.9152   2.7000
  6.1265   2.7000
  6.3377   2.7000
  6.5490   2.7000
  6.7602   2.7000
  6.9715   2.7000
  7.1828   2.7000
/
 }\relax
\setsolid
\Goldenrod{\relax 
\plot
  0.0000   0.2030
  0.2113  -0.0501
  0.4225   0.0699
  0.6338   0.1946
  0.8450   0.3494
  1.0563   0.5887
  1.2675   0.8885
  1.4788   1.1932
  1.6901   1.4681
  1.9013   1.7099
  2.1126   1.9304
  2.3238   2.1379
  2.5351   2.3271
  2.7463   2.4807
  2.9576   2.5792
  3.1689   2.6142
  3.3801   2.5974
  3.5914   2.5584
  3.8026   2.5322
  4.0139   2.5408
  4.2252   2.5820
  4.4364   2.6311
  4.6477   2.6579
  4.8589   2.6478
  5.0702   2.6132
  5.2814   2.5863
  5.4927   2.5965
  5.7040   2.6447
  5.9152   2.6941
  6.1265   2.6956
  6.3377   2.6463
  6.5490   2.6268
  6.7602   2.7022
  6.9715   2.7106
  7.1828   2.7497
/
 }\relax
\Orange{\relax 
\plot
  0.0000   0.2030
  0.2113   0.0266
  0.4225   0.1777
  0.6338   0.3256
  0.8450   0.4993
  1.0563   0.7541
  1.2675   1.0665
  1.4788   1.3810
  1.6901   1.6633
  1.9013   1.9103
  2.1126   2.1342
  2.3238   2.3434
  2.5351   2.5327
  2.7463   2.6848
  2.9576   2.7808
  3.1689   2.8127
  3.3801   2.7927
  3.5914   2.7509
  3.8026   2.7221
  4.0139   2.7284
  4.2252   2.7669
  4.4364   2.8128
  4.6477   2.8358
  4.8589   2.8219
  5.0702   2.7839
  5.2814   2.7544
  5.4927   2.7622
  5.7040   2.8072
  5.9152   2.8518
  6.1265   2.8474
  6.3377   2.7931
  6.5490   2.7705
  6.7602   2.8403
  6.9715   2.8388
  7.1828   2.9291
/
 }\relax
\Red{\relax 
\plot
  0.0000   0.2030
  0.2113   0.0213
  0.4225   0.1701
  0.6338   0.3160
  0.8450   0.4878
  1.0563   0.7409
  1.2675   1.0515
  1.4788   1.3644
  1.6901   1.6451
  1.9013   1.8905
  2.1126   2.1128
  2.3238   2.3204
  2.5351   2.5081
  2.7463   2.6588
  2.9576   2.7532
  3.1689   2.7837
  3.3801   2.7622
  3.5914   2.7189
  3.8026   2.6887
  4.0139   2.6936
  4.2252   2.7308
  4.4364   2.7753
  4.6477   2.7970
  4.8589   2.7818
  5.0702   2.7425
  5.2814   2.7117
  5.4927   2.7183
  5.7040   2.7622
  5.9152   2.8058
  6.1265   2.8004
  6.3377   2.7452
  6.5490   2.7219
  6.7602   2.7916
  6.9715   2.7898
  7.1828   2.8731
/
 }\relax
%
%\LimeGreen{\relax 
%\plot
%/
% }\relax
%
%\PineGreen{\relax 
%\plot
%/
% }\relax
%
%\Blue{\relax 
%\plot
%/
% }\relax
%
\endpicture
}   % end of final time data for phi2:  $\betau=\betav=0.1$, $T=1$
\xfigdim=0.1\xfiglen
\yfigdim=0.27\yfiglen
\setbox\figureseventeen=\vbox{\hsize=\xfiglen
% This is for: final time data for phi1:  $\betau=\betav=1$, $T=1$
\beginpicture
\footnotesize
  \setcoordinatesystem units <\xfigdim,\yfigdim>  point at 0 0
  \setplotarea x from 0 to 10, y from 0 to 3.7
  \axis bottom shiftedto y=0 ticks short numbered from 0 to 10 by 2 /
% unlabeled short quantity 8 / /
  \axis left ticks short numbered from 0 to 3 by 1 /
% unlabeled short quantity 16 / /
 \put {{\sevenrm $\phi_1(w)$}} [l] at 0.04 3.7
 \put {{\sevenrm $w$}} [rb] at 10 0.05
 \put {$\beta=1$} [rt] at 9.2 3.5
%\setquadratic
\setdashes <3pt>
\Black{
\plot
  0.0000   0.0000
  0.2755   0.5949
  0.5511   1.4046
  0.8266   2.2949
  1.1021   3.0153
  1.3776   3.3320
  1.6532   3.1848
  1.9287   2.7207
  2.2042   2.1788
  2.4798   1.7427
  2.7553   1.4756
  3.0308   1.3502
  3.3063   1.3095
  3.5819   1.3077
  3.8574   1.3193
  4.1329   1.3339
  4.4085   1.3478
  4.6840   1.3605
  4.9595   1.3718
  5.2350   1.3821
  5.5106   1.3913
  5.7861   1.3997
  6.0616   1.4073
  6.3372   1.4143
  6.6127   1.4207
  6.8882   1.4266
  7.1637   1.4321
  7.4393   1.4372
  7.7148   1.4419
  7.9903   1.4463
  8.2659   1.4504
  8.5414   1.4542
  8.8169   1.4579
  9.0924   1.4613
  9.3680   1.4645
/
 }\relax
\setsolid
\Goldenrod{\relax 
\plot
  0.0000  -0.0155
  0.2755   0.6493
  0.5511   1.5064
  0.8266   2.4059
  1.1021   3.0954
  1.3776   3.3945
  1.6532   3.2852
  1.9287   2.8959
  2.2042   2.4133
  2.4798   1.9920
  2.7553   1.7074
  3.0308   1.5603
  3.3063   1.5113
  3.5819   1.5147
  3.8574   1.5379
  4.1329   1.5637
  4.4085   1.5856
  4.6840   1.6025
  4.9595   1.6159
  5.2350   1.6288
  5.5106   1.6436
  5.7861   1.6601
  6.0616   1.6758
  6.3372   1.6877
  6.6127   1.6956
  6.8882   1.7023
  7.1637   1.7111
  7.4393   1.7230
  7.7148   1.7357
  7.9903   1.7455
  8.2659   1.7501
  8.5414   1.7541
  8.8169   1.7667
  9.0924   1.7759
  9.3680   1.7859
/
 }\relax
\Orange{\relax 
\plot
  0.0000  -0.0155
  0.2755   0.6043
  0.5511   1.4402
  0.8266   2.3202
  1.1021   2.9903
  1.3776   3.2700
  1.6532   3.1428
  1.9287   2.7379
  2.2042   2.2428
  2.4798   1.8118
  2.7553   1.5196
  3.0308   1.3663
  3.3063   1.3119
  3.5819   1.3103
  3.8574   1.3285
  4.1329   1.3494
  4.4085   1.3665
  4.6840   1.3787
  4.9595   1.3875
  5.2350   1.3959
  5.5106   1.4062
  5.7861   1.4183
  6.0616   1.4296
  6.3372   1.4372
  6.6127   1.4409
  6.8882   1.4435
  7.1637   1.4483
  7.4393   1.4561
  7.7148   1.4647
  7.9903   1.4705
  8.2659   1.4714
  8.5414   1.4717
  8.8169   1.4804
  9.0924   1.4862
  9.3680   1.4916
/
 }\relax
\Red{\relax 
\plot
  0.0000  -0.0155
  0.2755   0.6001
  0.5511   1.4344
  0.8266   2.3133
  1.1021   2.9824
  1.3776   3.2614
  1.6532   3.1333
  1.9287   2.7276
  2.2042   2.2316
  2.4798   1.7998
  2.7553   1.5070
  3.0308   1.3533
  3.3063   1.2985
  3.5819   1.2966
  3.8574   1.3146
  4.1329   1.3353
  4.4085   1.3522
  4.6840   1.3642
  4.9595   1.3729
  5.2350   1.3812
  5.5106   1.3913
  5.7861   1.4033
  6.0616   1.4145
  6.3372   1.4220
  6.6127   1.4256
  6.8882   1.4280
  7.1637   1.4327
  7.4393   1.4405
  7.7148   1.4490
  7.9903   1.4545
  8.2659   1.4550
  8.5414   1.4551
  8.8169   1.4636
  9.0924   1.4686
  9.3680   1.4795
/
 }\relax
%
%\LimeGreen{\relax 
%\plot
%/
% }\relax
%
%\PineGreen{\relax 
%\plot
%/
% }\relax
%
%\Blue{\relax 
%\plot
%/
% }\relax
%
\endpicture
}   % end of final time data for phi1:  $\betau=\betav=1$, $T=1$
\xfigdim=0.1\xfiglen
\yfigdim=0.33\yfiglen
\setbox\figureeighteen=\vbox{\hsize=\xfiglen
% This is for: final time data for phi2:  $\betau=\betav=1$, $T=1$
\beginpicture
\footnotesize
  \setcoordinatesystem units <\xfigdim,\yfigdim>  point at 0 0
  \setplotarea x from 0 to 10, y from 0 to 3
  \axis bottom shiftedto y=0 ticks short numbered from 0 to 10 by 2 /
% unlabeled short quantity 8 / /
  \axis left ticks short numbered from 0 to 3 by 1 /
% unlabeled short quantity 16 / /
 \put {{\sevenrm $\phi_2(w)$}} [l] at 0.04 3
 \put {{\sevenrm $w$}} [rb] at 10 0.05
%\setquadratic
\setdashes <3pt>
\Black{
\plot
  0.0000   0.0000
  0.2755   0.0641
  0.5511   0.2398
  0.8266   0.5020
  1.1021   0.8255
  1.3776   1.1852
  1.6532   1.5561
  1.9287   1.9130
  2.2042   2.2309
  2.4798   2.4846
  2.7553   2.6490
  3.0308   2.7000
  3.3063   2.7000
  3.5819   2.7000
  3.8574   2.7000
  4.1329   2.7000
  4.4085   2.7000
  4.6840   2.7000
  4.9595   2.7000
  5.2350   2.7000
  5.5106   2.7000
  5.7861   2.7000
  6.0616   2.7000
  6.3372   2.7000
  6.6127   2.7000
  6.8882   2.7000
  7.1637   2.7000
  7.4393   2.7000
  7.7148   2.7000
  7.9903   2.7000
  8.2659   2.7000
  8.5414   2.7000
  8.8169   2.7000
  9.0924   2.7000
  9.3680   2.7000
/
 }\relax
\setsolid
\Goldenrod{\relax 
\plot
  0.0000   0.0229
  0.2755   0.0419
  0.5511   0.2147
  0.8266   0.4728
  1.1021   0.7884
  1.3776   1.1438
  1.6532   1.5183
  1.9287   1.8824
  2.2042   2.2010
  2.4798   2.4441
  2.7553   2.5994
  3.0308   2.6769
  3.3063   2.7034
  3.5819   2.7082
  3.8574   2.7108
  4.1329   2.7171
  4.4085   2.7236
  4.6840   2.7264
  4.9595   2.7267
  5.2350   2.7293
  5.5106   2.7376
  5.7861   2.7497
  6.0616   2.7604
  6.3372   2.7657
  6.6127   2.7668
  6.8882   2.7680
  7.1637   2.7729
  7.4393   2.7815
  7.7148   2.7917
  7.9903   2.7993
  8.2659   2.8003
  8.5414   2.7998
  8.8169   2.8122
  9.0924   2.8197
  9.3680   2.8245
/
 }\relax
\Orange{\relax 
\plot
  0.0000   0.0229
  0.2755   0.0670
  0.5511   0.2496
  0.8266   0.5142
  1.1021   0.8343
  1.3776   1.1922
  1.6532   1.5674
  1.9287   1.9305
  2.2042   2.2465
  2.4798   2.4857
  2.7553   2.6362
  3.0308   2.7086
  3.3063   2.7298
  3.5819   2.7292
  3.8574   2.7267
  4.1329   2.7278
  4.4085   2.7291
  4.6840   2.7269
  4.9595   2.7222
  5.2350   2.7199
  5.5106   2.7233
  5.7861   2.7305
  6.0616   2.7362
  6.3372   2.7366
  6.6127   2.7328
  6.8882   2.7293
  7.1637   2.7294
  7.4393   2.7333
  7.7148   2.7387
  7.9903   2.7415
  8.2659   2.7379
  8.5414   2.7330
  8.8169   2.7405
  9.0924   2.7434
  9.3680   2.7429
/
 }\relax
\Red{\relax 
\plot
  0.0000   0.0229
  0.2755   0.0629
  0.5511   0.2437
  0.8266   0.5070
  1.1021   0.8259
  1.3776   1.1828
  1.6532   1.5571
  1.9287   1.9193
  2.2042   2.2345
  2.4798   2.4729
  2.7553   2.6225
  3.0308   2.6941
  3.3063   2.7144
  3.5819   2.7130
  3.8574   2.7096
  4.1329   2.7099
  4.4085   2.7105
  4.6840   2.7075
  4.9595   2.7021
  5.2350   2.6991
  5.5106   2.7018
  5.7861   2.7084
  6.0616   2.7135
  6.3372   2.7132
  6.6127   2.7087
  6.8882   2.7045
  7.1637   2.7041
  7.4393   2.7074
  7.7148   2.7122
  7.9903   2.7143
  8.2659   2.7098
  8.5414   2.7041
  8.8169   2.7113
  9.0924   2.7131
  9.3680   2.7143
/
 }\relax
%
%\LimeGreen{\relax 
%\plot
%/
% }\relax
%
%\PineGreen{\relax 
%\plot
%/
% }\relax
%
%\Blue{\relax 
%\plot
%/
% }\relax
%
\endpicture
}   % end of final time data for phi2:  $\betau=\betav=1$, $T=1$
\xfigdim=0.033\xfiglen
\yfigdim=0.27\yfiglen
\setbox\figurenineteen=\vbox{\hsize=\xfiglen
% This is for: final time data for phi1:  $\betau=\betav=10$, $T=1$
\beginpicture
\footnotesize
  \setcoordinatesystem units <\xfigdim,\yfigdim>  point at 0 0
  \setplotarea x from 0 to 31, y from 0 to 3.7
  \axis bottom shiftedto y=0 ticks short numbered from 0 to 30 by 5 /
% unlabeled short quantity 8 / /
  \axis left ticks short numbered from 0 to 3 by 1 /
% unlabeled short quantity 16 / /
 \put {{\sevenrm $\phi_1(w)$}} [l] at 0.04 3.7
 \put {{\sevenrm $w$}} [rb] at 31 0.05
 \put {$\beta=10$} [rt] at 29 3.5
\setquadratic
\setdashes <3pt>
\Black{
\plot
  0.0000   0.0000
  0.9139   2.5549
  1.8279   2.9123
  2.7418   1.4850
  3.6558   1.3101
  4.5697   1.3554
  5.4837   1.3904
  6.3976   1.4157
  7.3115   1.4349
  8.2255   1.4498
  9.1394   1.4618
 10.0534   1.4717
 10.9673   1.4799
 11.8813   1.4868
 12.7952   1.4928
 13.7091   1.4980
 14.6231   1.5025
 15.5370   1.5065
 16.4510   1.5101
 17.3649   1.5133
 18.2789   1.5161
 19.1928   1.5187
 20.1067   1.5211
 21.0207   1.5233
 21.9346   1.5252
 22.8486   1.5271
 23.7625   1.5287
 24.6765   1.5303
 25.5904   1.5317
 26.5043   1.5331
 27.4183   1.5343
 28.3322   1.5355
 29.2462   1.5366
 30.1601   1.5377
 31.0741   1.5386
/
 }\relax
\setsolid
\Goldenrod{\relax 
\plot
  0.0000  -0.3715
  0.9139   2.3400
  1.8279   2.6117
  2.7418   1.9558
  3.6558   1.4236
  4.5697   1.2959
  5.4837   1.3857
  6.3976   1.4727
  7.3115   1.4869
  8.2255   1.4737
  9.1394   1.4874
 10.0534   1.5297
 10.9673   1.5641
 11.8813   1.5650
 12.7952   1.5448
 13.7091   1.5373
 14.6231   1.5589
 15.5370   1.5915
 16.4510   1.6041
 17.3649   1.5903
 18.2789   1.5763
 19.1928   1.5883
 20.1067   1.6159
 21.0207   1.6248
 21.9346   1.6084
 22.8486   1.6034
 23.7625   1.6286
 24.6765   1.6418
 25.5904   1.6217
 26.5043   1.6280
 27.4183   1.6545
 28.3322   1.6319
 29.2462   1.6567
 30.1601   1.6384
 31.0741   1.6833
/
 }\relax
\Orange{\relax 
\plot
  0.0000  -0.3715
  0.9139   2.3261
  1.8279   2.5911
  2.7418   1.9301
  3.6558   1.3939
  4.5697   1.2626
  5.4837   1.3490
  6.3976   1.4328
  7.3115   1.4438
  8.2255   1.4276
  9.1394   1.4384
 10.0534   1.4779
 10.9673   1.5095
 11.8813   1.5075
 12.7952   1.4847
 13.7091   1.4746
 14.6231   1.4936
 15.5370   1.5234
 16.4510   1.5335
 17.3649   1.5171
 18.2789   1.5007
 19.1928   1.5101
 20.1067   1.5352
 21.0207   1.5415
 21.9346   1.5227
 22.8486   1.5153
 23.7625   1.5379
 24.6765   1.5487
 25.5904   1.5264
 26.5043   1.5301
 27.4183   1.5544
 28.3322   1.5295
 29.2462   1.5522
 30.1601   1.5315
 31.0741   1.5731
/
 }\relax
%
%\Red{\relax 
%\plot
%/
% }\relax
%
%\LimeGreen{\relax 
%\plot
%/
% }\relax
%
%\PineGreen{\relax 
%\plot
%/
% }\relax
%
%\Blue{\relax 
%\plot
%/
% }\relax
%
\endpicture
}   % end of final time data for phi1:  $\betau=\betav=10$, $T=1$
\xfigdim=0.033\xfiglen
\yfigdim=0.33\yfiglen
\setbox\figuretwenty=\vbox{\hsize=\xfiglen
% This is for: final time data for phi2:  $\betau=\betav=10$, $T=1$
\beginpicture
\footnotesize
  \setcoordinatesystem units <\xfigdim,\yfigdim>  point at 0 0
  \setplotarea x from 0 to 31, y from 0 to 3
  \axis bottom shiftedto y=0 ticks short numbered from 0 to 30 by 5 /
% unlabeled short quantity 8 / /
  \axis left ticks short numbered from 0 to 3 by 1 /
% unlabeled short quantity 16 / /
 \put {{\sevenrm $\phi_2(w)$}} [l] at 0.04 3
 \put {{\sevenrm $w$}} [rb] at 31 0.05
\setquadratic
\setdashes <3pt>
\Black{
\plot
  0.0000   0.0000
  0.9139   0.5991
  1.8279   1.7856
  2.7418   2.6435
  3.6558   2.7000
  4.5697   2.7000
  5.4837   2.7000
  6.3976   2.7000
  7.3115   2.7000
  8.2255   2.7000
  9.1394   2.7000
 10.0534   2.7000
 10.9673   2.7000
 11.8813   2.7000
 12.7952   2.7000
 13.7091   2.7000
 14.6231   2.7000
 15.5370   2.7000
 16.4510   2.7000
 17.3649   2.7000
 18.2789   2.7000
 19.1928   2.7000
 20.1067   2.7000
 21.0207   2.7000
 21.9346   2.7000
 22.8486   2.7000
 23.7625   2.7000
 24.6765   2.7000
 25.5904   2.7000
 26.5043   2.7000
 27.4183   2.7000
 28.3322   2.7000
 29.2462   2.7000
 30.1601   2.7000
 31.0741   2.7000
/
 }\relax
\setsolid
\Goldenrod{\relax 
\plot
  0.0000  -0.0977
  0.9139   0.7405
  1.8279   1.7675
  2.7418   2.4749
  3.6558   2.7700
  4.5697   2.8050
  5.4837   2.7623
  6.3976   2.7409
  7.3115   2.7522
  8.2255   2.7694
  9.1394   2.7726
 10.0534   2.7652
 10.9673   2.7618
 11.8813   2.7700
 12.7952   2.7832
 13.7091   2.7902
 14.6231   2.7875
 15.5370   2.7833
 16.4510   2.7873
 17.3649   2.7993
 18.2789   2.8082
 19.1928   2.8064
 20.1067   2.8006
 21.0207   2.8037
 21.9346   2.8163
 22.8486   2.8233
 23.7625   2.8184
 24.6765   2.8178
 25.5904   2.8291
 26.5043   2.8314
 27.4183   2.8280
 28.3322   2.8403
 29.2462   2.8342
 30.1601   2.8472
 31.0741   2.8335
/
 }\relax
\Orange{\relax 
\plot
  0.0000  -0.0977
  0.9139   0.7245
  1.8279   1.7441
  2.7418   2.4451
  3.6558   2.7344
  4.5697   2.7641
  5.4837   2.7165
  6.3976   2.6904
  7.3115   2.6974
  8.2255   2.7104
  9.1394   2.7095
 10.0534   2.6981
 10.9673   2.6909
 11.8813   2.6953
 12.7952   2.7048
 13.7091   2.7081
 14.6231   2.7018
 15.5370   2.6941
 16.4510   2.6947
 17.3649   2.7032
 18.2789   2.7087
 19.1928   2.7036
 20.1067   2.6944
 21.0207   2.6942
 21.9346   2.7035
 22.8486   2.7072
 23.7625   2.6990
 24.6765   2.6952
 25.5904   2.7031
 26.5043   2.7022
 27.4183   2.6955
 28.3322   2.7045
 29.2462   2.6949
 30.1601   2.7045
 31.0741   2.6878
/
 }\relax
%
%\Red{\relax 
%\plot
%/
% }\relax
%
%\LimeGreen{\relax 
%\plot
%/
% }\relax
%
%\PineGreen{\relax 
%\plot
%/
% }\relax
%
%\Blue{\relax 
%\plot
%/
% }\relax
%
\endpicture
}   % end of final time data for phi2:  $\betau=\betav=10$, $T=1$

\begin{figure}[!ht]
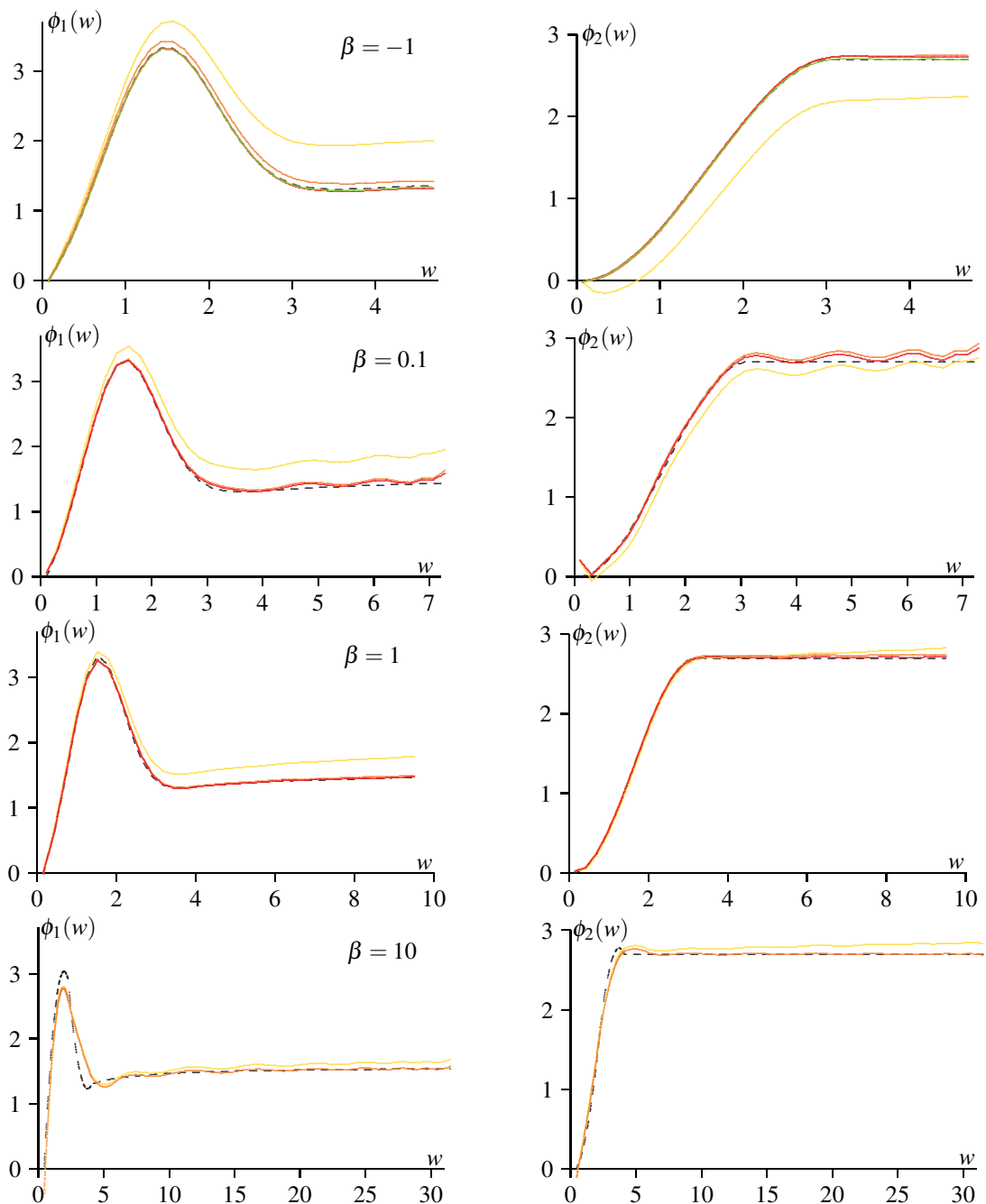

\hbox to\hsize{\hss\copy\figurethirteen\hss\hss\hss\copy\figurefourteen\hss}
\smallskip
\hbox to\hsize{\hss\copy\figurefifteen\hss\hss\hss\copy\figuresixteen\hss}
\smallskip
\hbox to\hsize{\hss\copy\figureseventeen\hss\hss\hss\copy\figureeighteen\hss}
\smallskip
\hbox to\hsize{\hss\copy\figurenineteen\hss\hss\hss\copy\figuretwenty\hss}
\caption{\small Reconstructions of $\phi_1$ and $\phi_2\;$ from time trace data. }
\label{fig:titr_reconstructions_phi}
\end{figure}

In Figure~\oldref{fig:cgt_rates_phi_beta} we plot 
the relative norms of
$\|\phi_i^{(n)} - \phi_{i,\mbox{\footnotesize actual}}\|$
(that is, the ratio of this quantity to
$\|\phi_i^{(0)} - \phi_{i,\mbox{\footnotesize actual}}\|$ as a 
function of the iteration index $n$, 
for each of these $\beta$-values.
%$\beta=-1,\;0.1,\;1,\;10$.
Here $\phi_i^{(0)}$  denotes the initial approximation to the 
$i^{\mbox{th}}$ function (in this case it was the zero function).

\begin{figure}[h]
\hbox to\hsize{\hss\copy\figurefive\hss\hss\copy\figuresix\hss}
\medskip
\hbox to\hsize{\hss\copy\figureseven\hss\hss\copy\figureeight\hss}
\smallskip
\caption{\small
Convergence rates of $\phi$ iterations as a function of $\beta$.
Top: time trace, Bottom: final time.
}
\label{fig:cgt_rates_phi_beta}
\end{figure}

These plots also show the case of $\beta_u=-\beta_v=1$ and setting instead
$w=u^2v$.
Such an  asymmetrical relationship occurs frequently in chemical reactions.
As an example, if $\phi_i(w)=w$, and $\beta_u$, $\beta_v$ have opposite
signs this is the classical ``Brusselator'' model for
a Belousov-Zhabotinsky auto-catalytic reaction.
It is the theoretical underpinning of the chemistry ``magic trick''
of a jar of liquid changing colour from red to blue and back again in a
repeating cycle.  Other colour variations are also possible.
The two chemicals used are often potassium bromate and cerium sulphate in an
acid base.
This is just a specific example of a wider class of periodic cycle
solutions of the Turing type, \cite{Turing:1952}.
See also \cite{WalgraefAifantis:1985,Pearson:1993}.
We remark here that from either an analytical or computational perspective
other choices of $w$ are possible.
For example, we could choose $w(u,v) = \sqrt{u^2 + v^2}$ or in the case
with more equations as $w(\vec u) = \|\vec u\|$.

In these references $\phi_1$ and $\phi_1$ have simple (low degree
polynomial) form. The ability to go beyond this and determine
a more complex form has clear physical applicability.

\section*{Appendix}
\paragraph{Proof of Lemma \oldref{lem:Kbar}}

We use a theorem from \cite{Friedman1964}, which we here quote for convenience of the reader.
\begin{quote}
Theorem 6 (Friedman \cite[Theorem 1, page 92]{Friedman1964})\\
Assume that $\mathbb{L}u=\sum_{i,j=1}^d a_{i,j} u_{x_i,x_j} + \sum_{i=1}^d b_{i} u_{x_i} + c u$ 
%(in non-divergence form!) 
with $\|a_{i,j}\|_{C^{0,\beta}(Q)}\leq K_1$,\\ $\|b_{i}\|_{C^{1,\beta}(Q)}\leq K_1$, $\|c\|_{C^{2,\beta}(Q)}\leq K_1$, that for any real vector $\xi$ and $(x,t)\in Q=\Omega\times(0,T)$ the inequality $\sum_{i,j=1}^d a_{i,j}(x,t)\xi_i\xi_j \geq K_2 |\xi|^2$ with $K_2>0$, and that $j\in C^\beta(Q)$.
\footnote{Note that in \cite[Theorem 1, page 92]{Friedman1964}, $|f|_{2,\alpha}\sim |f|_{\alpha}$ on a smooth domain $\Omega$}
\\
Then there exists a constant $K_0$ depending only on $K_1$, $K_2$, $d$ such that if $w$ is a solution of
$w_t-\mathbb{L} w =j$ in $Q$
with $w$, $w_t$, $w_{x_i}$, $w_{x_i,x_j}$ $\in C^{0,\beta}(Q)$ satisfies the estimate
\begin{equation}\label{Thm1p92Friedman1964}
\|w_{x_i,x_j}\|_{C^{0,\beta}(Q)}\leq K_0(\|w\|_{C(Q)}+\|j\|_{C^{0,\beta}(Q)})
\end{equation}
\end{quote}
Thus it suffices to estimate $\|w\|_{C(Q)}$ on the right hand side in terms of $\|j\|_{C^{0,\beta}(Q)}$, which we do by means of a maximum principle, similarly to Lemma \oldref{lem:expdecay}.
More precisely, since under the assumption \eqref{eqn:LA} $v:=\frac12 z^2$ solves   
\[
\begin{aligned}
v_t-\mathbb{L} v +\check{q} v&=(\check{q}-c) w^2 -\nabla w^T \underline{A} \nabla w + w\, j%\\ 
%&
 \leq  (\check{c}_1+\epsilon) w^2 +\frac{1}{4\epsilon} j^2
\mbox{ in }\Omega\times(0,T) \\
\frac{\partial v}{\partial\nu} + \gamma v&= 0\quad \mbox{ on }\partial \Omega\times(0,T) \\
v(x,0)&=0 \quad  x\in\Omega\,.
\end{aligned}
\]
where we have used Young's inequality with some $\epsilon>0$ to be chosen below, as well as nonnegativity of $w^2$ and \eqref{eqn:c1check_lem}.
Hence by the maximum principle $0\leq v(x,t)\leq\bar{v}(x,t)$ for all $(x,t)\in Q$, where $\bar{v}$ solves
\[
\bar{v}_t-\mathbb{L} \bar{v} +\check{q} \bar{v}= 2(\check{c}_1+\epsilon) v +\frac{1}{4\epsilon} j^2 \] 
in $Q$ with homogeneous initial and boundary conditions.
The values of $v,\bar{v}$ can be estimated analogously to \eqref{eqn:CA}, \eqref{eqn:expmuz} as follows
\[
\begin{aligned}
\|e^{\mu t} v(t)\|_{C(\Omega)}
&\leq\|e^{\mu t} \bar{v}(t)\|_{C^1(\Omega)}
\leq C
\Bigl(2(\check{c}_1+\epsilon)\|v\|_{L^p(\Omega\times(0,t))} + \tfrac{1}{4\epsilon}\|j^2\|_{L^p(\Omega\times(0,t))}\Bigr) \\
&\leq C |\Omega|^{1/p} t^{1/p}
\Bigl(2(\check{c}_1+\epsilon)\|v\|_{C(\Omega\times(0,t))} + \tfrac{1}{4\epsilon}\|j^2\|_{C(\Omega\times(0,t))}\Bigr) 
\end{aligned}
\]
with $C=C_{W^{\theta,p},C}^{\mathbb{R^+}} C_{W^{2-2\theta,p},C^1}^\Omega C^A_\mu$, i.e., 
\[
\|v(t)\|_{C(\Omega)}
\leq  
C |\Omega|^{1/p} t^{1/p} e^{-\mu t} 
\Bigl(2(\check{c}_1+\epsilon)\|v\|_{C(\Omega\times(0,t))} + \tfrac{1}{4\epsilon}\|j\|_{C(\Omega\times(0,t))}\Bigr)\,,
\]
hence by taking the supremum over $t\in[0,T]$ and using $\sup_{t\in\mathbb{R}} e^{-\mu t} t^{1/p} = (ep\mu)^{-1/p}$
\[
\|v\|_{C(Q)}
\leq  
C |\Omega|^{1/p} (ep\mu)^{-1/p}
\Bigl(2(\check{c}_1+\epsilon)\|v\|_{C(Q)} + \tfrac{1}{4\epsilon}\|j^2\|_{C(Q)}\Bigr)\,.
\]
Thus with $\check{c}_1+\epsilon$ sufficiently small, more precisely $\check{c}_1+\epsilon< \frac{1}{2C}|\Omega|^{-1/p} (ep\mu)^{1/p}$,

\[
\|w\|_{C(Q)}^2\leq
2\|v\|_{C(Q)}
\leq  \frac{C |\Omega|^{1/p} (ep\mu)^{-1/p}}{4\epsilon(1-2C(\check{c}_1+\epsilon)|\Omega|^{1/p} (ep\mu)^{-1/p})}\|j\|_{C(Q)}^2\,,
\]
which together with \eqref{Thm1p92Friedman1964} yields the assertion.

\section{Epilogue}

It is tempting for authors to wonder how a paper will be received and in
this case the answer to the question is likely to depend on the community
to which the reader belongs.

The practitioners might feel not enough attention was given for complete
answers to specific problems or the range of problems was insufficient;
``why was $\;\ldots\;$ not tackled?
Mathematicians might have liked to see theorems containing
``sufficently small'' conditions replaced by estimates with tangible
values.
The inverse problems community seeing the complexities arising from
the unknown ranges and nonlinearities themselves, might reflect,
``but linear inverse problems/equations behave even better.''

In some sense these are valid statements.
However, reaction diffusion systems are able to model an enormous range
of physical problems and coupled systems of nonlinear equations are
always going to impose mathematical difficulties.
There are indeed easier inverse problems, but it is the above ubiquity
and challenges that make them compelling.

We hope to continue this work by expanding the range of questions posed,
by looking for better analytic tools and superior computational
methods.
There is much, much more still to be said.

\section*{Acknowledgment}

\noindent
The work of the first author was supported by the Austrian Science Fund {\sc fwf}
under the grant P30054.

\noindent
The work of the second author was supported 
in part by the
National Science Foundation through award {\sc dms}-1620138.

%%%%%%%%%%%%%%%%%%%%%%%%%%%%%%%%%%%%%%%%%%%%

\bibliographystyle{plain}
\bibliography{bk-br}

\end{document}